\DeclareMathOperator{\Ext}{Ext}
\DeclareMathOperator{\ch}{ch}
\DeclareMathOperator{\cH}{H}
\newcommand{\id}{\mathrm{id}}
\newcommand{\ta}[1]{#1^{[2]}}
\newcommand{\tb}[1]{#1^{[3]}}
\newcommand{\mc}[1]{\mathcal{#1}}
\newcommand{\tbmc}[1]{\tb{\mc{#1}}}
\DeclareMathOperator*{\NS}{NS}
\newcommand{\ZZ}{\mathbb{Z}}
\newcommand{\Z}{\mathbb{Z}}
\newcommand{\C}{\mathbb{C}}
\newcommand{\R}{\mathbb{R}}
\newcommand{\PP}{\mathbb{P}}
\newcommand{\CC}{\mathbb{C}}
\let\div\undefined
\DeclareMathOperator{\div}{div\!}
\DeclareMathOperator{\td}{td}
\newcommand{\Hilb}{\mathrm{Hilb}}
\makeatletter \renewenvironment{proof}[1][\proofname]
{\par\pushQED{\qed}\normalfont
\topsep6\p@\@plus6\p@\relax\trivlist \item[\hskip 1.5em \itshape#1\@addpunct{.}]\ignorespaces}{\popQED\endtrivlist\@endpefalse} \makeatother 
\newtheoremstyle{thm}
  {1.5\topsep}   % ABOVESPACE
  {1.5\topsep}   % BELOWSPACE
  {\itshape}  % BODYFONT
  {0pt}       % INDENT (empty value is the same as 0pt)
  {\bfseries} % HEADFONT
  {.}         % HEADPUNCT
  {5pt plus 1pt minus 1pt} % HEADSPACE
  {} 
\newtheoremstyle{defi}
  {1.5\topsep}   % ABOVESPACE
  {1.5\topsep}   % BELOWSPACE
  {\normalfont}  % BODYFONT
  {0pt}       % INDENT (empty value is the same as 0pt)
  {\bfseries} % HEADFONT
  {.}         % HEADPUNCT
  {5pt plus 1pt minus 1pt} % HEADSPACE
  {} 
\newtheoremstyle{rem}
  {1.5\topsep}   % ABOVESPACE
  {1.5\topsep}   % BELOWSPACE
  {\normalfont}  % BODYFONT
  {0pt}       % INDENT (empty value is the same as 0pt)
  {\bfseries} % HEADFONT
  {.}         % HEADPUNCT
  {5pt plus 1pt minus 1pt} % HEADSPACE
  {} 
\theoremstyle{thm}
\newtheorem{thm}{Theorem}[section]
\newtheorem*{thm*}{Theorem}
\newtheorem{prop}[thm]{Proposition}
\newtheorem{lem}[thm]{Lemma}
\newtheorem{cor}[thm]{Corollary}
\theoremstyle{defi}
\newtheorem{defi}[thm]{Definition}
\theoremstyle{rem}
\newtheorem{rem}[thm]{Remark}
\begin{document}

\title{Non-natural non-symplectic involutions on symplectic manifolds of $\ta{K3}$-type}
%moduli spaces of sheaves deformation equivalent to $K3^{[2]}$}
\author{Hisanori Ohashi\footnote{ohashi.hisanori@gmail.com}, Malte Wandel\footnote{wandel@math.uni-hannover.de}}

\maketitle
\begin{abstract}
We study non-symplectic involutions on irreducible symplectic manifolds of $K3^{[2]}$-type with $19$ parameters,
which is the second largest possible.
We classify the conjugacy classes of cohomological representations into four different types and show that there are  
at most five deformation types, two of which are given by natural involutions
and their flops. 
Next, we give a geometric realisation of one of the new types 
using moduli spaces of sheaves on $K3$ surfaces. The geometry of the manifold and the new involution is described in detail.
\vspace{2mm} \\
Keywords:\ moduli spaces, irreducible holomorphic symplectic manifolds, $K3$ surfaces, automorphisms.\\
MSC2010:\ 14D20, 14J28, 14J50, 14J60, 14F05.
\end{abstract}
\tableofcontents
\setcounter{section}{-1}
\section{Introduction}
During the last years automorphisms of irreducible holomorphic symplectic manifolds have been intensively studied. 
Inspired by the seminal work of Nikulin (\cite{Nik2}) on automorphisms of $K3$ surfaces, the theory has been developed by many authors such as Beauville (\cite{Beau,Beau2}), Boissi\'{e}re (\cite{Boi}),  Boissi\'{e}re$-$Nieper-Wisskirchen$-$Sarti (\cite{BNS,BNS2}), Camere (\cite{Cam}), Mongardi (\cite{Mon,Mon2}), O'Grady (\cite{O'G4}) and \linebreak Oguiso$-$Schr\"oer (\cite{OS}). The recent proof of the global Torelli theorem by Verbitsky (\cite{verbitsky}) will be the cornerstone in the subject.

Coming to non-symplectic involutions,
Beauville (\cite{Beau2}) was the first who studied them systematically. 
He showed that the fixed locus is a union of smooth Lagrangian submanifolds and in the case of fourfolds with $b_2=23$ he gave a classification of numerical invariants of the fixed locus, such as  
topological and holomorphic Euler characteristics, in terms of the trace 
of the action on the second cohomology.
He also gave a list of examples covering all possible values of the trace. 
In particular, it shows that the dimension of a family of involutions is at most $20$, which is attained by the famous 
double covers of EPW-sextics by O'Grady \cite{O'G}.

In this paper we study $19$-dimensional families of non-symplectic involutions in detail
using lattice theory, the moduli of marked manifolds and the theory of moduli spaces of sheaves on $K3$ surfaces. 
We note that the only known $19$-dimensional family so far has been the family of natural involutions. 
It turns out that there are several other deformation classes: We will distinguish them by the conjugacy classes 
of the cohomological action.
\begin{thm*}[Theorem \ref{4families}]
There exist four conjugacy classes of the action on the second cohomology of non-symplectic involutions with $19$ parameters 
on manifolds of $K3^{[2]}$-type. These classes can be distinguished by the invariant sublattice $H^2(X,\Z)^{\iota}$ as in the 
table in Theorem \ref{4families}. 

Moreover, in cases No.\ $1$, $2$ and $4$ any two involutions in the same class are deformation equivalent.
In case No.\ $3$, any involution can be deformed into one of two distinguished examples, namely the natural 
involution or its Mukai flop.
\end{thm*}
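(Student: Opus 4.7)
My plan proceeds in two stages: a lattice-theoretic classification of admissible invariant sublattices $T = H^2(X, \Z)^\iota$, followed by a deformation analysis via the global Torelli theorem.

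\emph{Lattice classification.} A non-symplectic involution $\iota$ acts on $L := H^2(X, \Z) \cong U^{\oplus 3} \oplus E_8(-1)^{\oplus 2} \oplus \langle -2 \rangle$ with invariant sublattice $T$ and anti-invariant sublattice $S = T^\perp$. Since the holomorphic $2$-form lies in $S \otimes \C$ and an invariant Kähler class lies in $T$, the signature of $T$ is $(1, \rk T - 1)$. The local deformations of the pair $(X, \iota)$ are identified with $H^{1,1}(X)^-$, which has dimension $21 - \rk T$, so $19$ parameters force $\rk T = 2$ and $\mathrm{sign}(T) = (1, 1)$. By Nikulin's theory of discriminant forms, a primitive embedding $T \hookrightarrow L$ realisable as the invariant part of an involution corresponds to a gluing subgroup $L/(T \oplus S) \subset A_T \oplus A_S$ which is $2$-torsion (because $\iota$ acts as $(+, -)$) and isotropic with respect to $q_L$ on $\Z/2$. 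Enumerating the finitely many rank-$2$ even lattices of signature $(1, 1)$ with small discriminant admitting such a gluing yields exactly four isomorphism classes of $T$; these must coincide with the four rows of the table, and each uniquely determines the conjugacy class of $\rho := \iota^*$ on $L$.

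\emph{Deformation equivalence via Torelli.} For a fixed representative $\rho \in O(L)$ of one of the four conjugacy classes, I would consider the moduli space $\mc{M}^\rho$ of marked triples $(X, \iota, \phi)$ with $\phi \circ \iota^* \circ \phi^{-1} = \rho$. The equivariant period map sends $(X, \iota, \phi)$ to $\phi_\C [H^{2, 0}(X)] \in \Omega_S \subset \PP(S \otimes \C)$; the target is the connected period domain associated to $S$ (with a fixed orientation of the positive $2$-plane). By Verbitsky's global Torelli theorem, any two marked pairs lying in the same connected component of $\mc{M}^\rho$ are deformation equivalent, and the generic fibres of the period map consist of birationally equivalent pairs. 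The question therefore reduces to counting connected components of $\mc{M}^\rho$, equivalently the number of $\iota^*$-stable Kähler chambers inside the positive cone of $X$, taken modulo the stabiliser of $T$ in the monodromy group $\mathrm{Mon}^2(X)$.

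\emph{Main obstacle: chamber counting.} The hard part is the chamber analysis. For each of the four lattices $T$, I would identify the walls of the Kähler cone that are fixed by $\iota^*$, using the description of wall divisors on $K3^{[2]}$-type manifolds (classes of square $-2$ or $-10$ of prescribed divisibility). In cases $1$, $2$ and $4$, a direct calculation inside $T \oplus S$ should show that no $\iota^*$-invariant wall meets the interior of $T_\R$, so the Kähler chamber is globally $\iota^*$-stable and $\mc{M}^\rho$ is connected, producing a single deformation class. In case $3$, an invariant wall corresponding to a $(-10)$-class arises and separates two Kähler chambers; these correspond to the natural involution $\sigma_S^{[2]}$ on the Hilbert square of a degree-$2$ $K3$ surface and its Mukai flop along the $\PP^2$ of non-reduced subschemes. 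This yields exactly two components of $\mc{M}^\rho$ and accounts for the dichotomy stated in the theorem. Combining this chamber count with the four-class lattice enumeration completes the proof.
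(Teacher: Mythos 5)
Your overall strategy (lattice classification of the invariant sublattice, then deformation equivalence via the period map and a chamber count) is the same as the paper's, but there are two concrete gaps in the execution.

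First, the lattice classification as you state it is incorrect. There are only \emph{three} isomorphism classes of rank-two hyperbolic $2$-elementary even lattices, namely $U$, $U(2)$ and $\langle 2\rangle\oplus\langle -2\rangle$; the fourth conjugacy class does not come from a fourth abstract lattice but from the fact that $\langle 2\rangle\oplus\langle -2\rangle$ admits two inequivalent primitive embeddings into $\Lambda=U^3\oplus E_8^2\oplus\langle -2\rangle$, distinguished by whether the $(-2)$-generator $g$ has divisibility $1$ or $2$. So your assertion that the isomorphism class of $T$ ``uniquely determines the conjugacy class of $\rho$'' is exactly what fails, and the substantive work is absent: one must show each of the four embedding data is a single $O(\Lambda)$-orbit. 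The uniqueness in case No.\ $4$ is genuinely delicate (the paper needs to compute that the divisibility of the $(+2)$-generator $h$ inside $(\Z g)^{\perp}$ is $1$ before Eichler's criterion applies). You also never actually establish that $T$ must be $2$-elementary; the clean route is to extend $\iota$ to the unimodular overlattice of $\Lambda\oplus\langle 2\rangle$ (acting by $-1$ on the extra generator) and quote the fact that invariant sublattices of involutions of unimodular lattices are $2$-elementary.

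Second, the chamber count on which your deformation argument rests is wrong in cases No.\ $1$ and $4$. You claim no $\iota^*$-invariant wall meets the positive cone of $T$; but $\iota^*$ is the identity on $T=\NS(X)$, so every wall there is invariant, and both $U$ (via $e-f$) and $\langle 2\rangle\oplus\langle -2\rangle$ contain $(-2)$-classes, which do cut the positive cone into two exceptional chambers. The conclusion that there is a single deformation class survives only because the reflection in that $(-2)$-wall is a monodromy operator, so the two points of the period-map fibre are the same manifold with different markings --- an argument your ``no walls, hence connected'' reasoning does not supply. In case No.\ $3$ the fibre has four points (both $(-2)$- and $(-10)$-walls occur), not two chambers, although there are indeed only two birational models. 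Finally, to conclude that two generic involutions over the same period actually coincide (rather than merely having conjugate cohomological actions) you need faithfulness of $\mathrm{Aut}(X)\to O(H^2(X,\Z))$ for $K3^{[2]}$-type, and to chain local deformations across $\Omega_J$ you need that the $\iota$-invariant part of the Kuranishi space maps locally isomorphically onto the $19$-dimensional invariant period domain; both steps are necessary and neither appears in your outline.
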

For the precise definition of deformation equivalence, see Definition \ref{deformation}. 
We note that the biregular involution on the flop is observed here for the first time.
The natural involution and its flop have the same invariants with respect to 
Proposition \ref{basic} and constitute the inseparable pair of points in the fibre of the period map. It would be interesting to ask the following question, which is a variant with automorphisms
of the fact that birational irreducible symplectic manifolds are deformation equivalent:\\
{\bf{Question:}} Are the natural involution and its Mukai flop deformation equivalent as 
automorphisms?

In this sense, our study introduces a new aspect in the classification theory of automorphisms
extending those for $K3$ surfaces.
In the proof of the theorem we make extensive use of the Torelli theorems. 

After the classification, as a new explicit example of a $19$-dimensional family, 
we give a realisation of involutions whose conjugacy class belongs to No.\ $1$ using moduli spaces 
of sheaves on the $19$-dimensional family of polarised $K3$ surfaces of degree two.
Note that by Remark \ref{impossibility} this idea exclusively applies to the case No.\ $1$.
We also show that the fixed locus consists of two connected components in this case. 
\begin{thm*}[Theorems \ref{thmfamily}, \ref{connected}]
There is a $19$-dimensional family of involutions whose conjugacy class belongs to No.\ $1$ in Theorem \ref{4families}.
It can be realised as a family of moduli spaces of sheaves on $K3$ surfaces of degree two. 
The fixed locus of these involutions consists of two smooth surfaces which are both a branched cover of $\PP^2$ of degree six and ten, respectively.
\end{thm*}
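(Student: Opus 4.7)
The plan is to realise the family using moduli spaces of sheaves on $K3$ surfaces of degree two. Let $(S, H)$ be such a $K3$ surface with $H^2 = 2$, and let $\sigma \colon S \to S$ denote the anti-symplectic covering involution of the double cover $\pi \colon S \to \PP^2$ associated to $|H|$. First I would choose a $\sigma^*$-invariant Mukai vector $v \in \widetilde H(S, \ZZ)$ with $v^2 = 2$, so that $M_v(S)$ is a smooth projective irreducible symplectic fourfold of $\ta{K3}$-type. Then $\sigma^*$ induces a biregular involution $\iota$ on $M_v(S)$, which is non-symplectic because $\sigma$ is; via Mukai's Hodge-isometric identification $v^\perp \cong H^2(M_v(S),\ZZ)$, the cohomological action $\iota^*$ corresponds to $\sigma^*|_{v^\perp}$.

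Next I would verify that $(M_v(S), \iota)$ belongs to conjugacy class No.\ $1$ of Theorem \ref{4families} by computing its invariant lattice. Since $H^2(S,\ZZ)^{\sigma^*} = \ZZ H$ and $\sigma^*$ acts trivially on the Mukai summands $H^0$ and $H^4$, the $\sigma^*$-invariant part of $\widetilde H(S,\ZZ)$ has rank three; intersecting with $v^\perp$ yields a rank-two sublattice whose bilinear form can be matched to the entry for No.\ $1$ in the table of Theorem \ref{4families}. The moduli of polarised $K3$ surfaces of degree two is itself $19$-dimensional, so varying $(S, H)$ with the numerical type of $v$ fixed sweeps out a $19$-dimensional family of pairs $(M_v(S), \iota)$; that this exhausts a component of the moduli of marked involutions of class No.\ $1$ then follows by a dimension count together with the Torelli-type statement of Proposition \ref{basic}.

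For the fixed locus I would argue as follows. An $\iota$-fixed stable sheaf $F$ is characterised by the existence of an isomorphism $\phi \colon \sigma^* F \xrightarrow{\sim} F$, i.e.\ a $\sigma$-linearisation. Because $F$ is simple and $\sigma$ has order two, such $\phi$ is unique up to sign, and the two signs give two disjoint components $F_+$ and $F_-$ of the fixed locus. Pushforward by $\pi$ splits the equivariant sheaf into eigensheaves under the decomposition $\pi_*\mathcal{O}_S = \mathcal{O}_{\PP^2}\oplus \mathcal{O}_{\PP^2}(-3)$, and a Fitting-support or Hilbert--Chow type morphism produces a natural map $F_\pm \to \PP^2$. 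The degrees $6$ and $10$ are then obtained by Grothendieck--Riemann--Roch from the Chern character of $\pi_*F$, and each component is smooth and Lagrangian by Beauville's theorem.

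The main obstacle will be the explicit degree computation, which requires carefully tracking the $\pm$-eigendecomposition of $\pi_*F$ through the Mukai vector machinery to arrive at the numerical invariants yielding the branched-cover degrees $6$ and $10$. A related subtlety is showing that there are exactly two components and not more: this relies on the uniqueness up to sign of the $\sigma$-linearisation for simple sheaves, together with the fact that both signs actually occur on the generic point of $M_v(S)^{\iota}$.
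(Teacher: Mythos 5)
Your construction of the family is essentially the paper's: take a degree-two polarised $K3$ surface $(S,H)$ with covering involution, an invariant Mukai vector $v$ with $v^2=2$, the induced biregular involution on $\mc{M}(v)$, and O'Grady's equivariant isometry to compute the invariant lattice. But you leave $v$ unspecified, and the choice is the whole point: $v=(1,0,-1)$ is invariant with square $2$ and gives $\Hilb^2(S)$ with invariant lattice $\langle 2\rangle\oplus\langle -2\rangle$, i.e.\ class No.\ $3$, not No.\ $1$. The paper takes $v_0=(2,H,0)$, for which the invariant part of $v_0^\perp$ is spanned by $(1,0,0)$ and $(0,H,1)$, with squares $0$ and $2$ and intersection $-1$, hence isomorphic to $U$. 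One also has to check $\mc{M}(v_0)=\mc{M}^s(v_0)$ (here: Picard rank one, so no walls) before claiming smoothness. These are fixable omissions, but "can be matched to the entry for No.\ $1$" is precisely the computation that has to be done.

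The fixed-locus argument has a genuine gap. First, the two signs of the linearisation $\phi\colon\sigma^\star\mc{F}\xrightarrow{\sim}\mc{F}$ of a simple fixed sheaf do \emph{not} produce two components: both signs are linearisations of the \emph{same} sheaf $\mc{F}$ and hence the same point of $\mc{M}(v_0)^\iota$; the sign ambiguity only gives an \'etale double cover of the fixed locus by the moduli of equivariant sheaves, and counting components requires a locally constant invariant of $\mc{F}$ itself. Second, the map to the plane cannot be a Fitting support of $\pi_\star\mc{F}$, which has rank four on $\PP^2$; moreover the relevant target is the dual plane $|\mc{O}(H)|\cong|\mc{O}_{\PP^2}(1)|$. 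The paper gets the map by identifying $\mc{M}(2,H,0)\cong\mc{M}(0,H,2)$ via the Serre construction ($\mc{F}\mapsto$ the twisted dual of $\coker(\mc{O}_S\otimes H^0(\mc{F})\to\mc{F})$), realising the fourfold as a relative compactified Jacobian $g\colon\mc{M}(v')\to|\mc{O}(H)|$, an $\iota$-invariant Lagrangian fibration. The degrees $6$ and $10$ are then not Grothendieck--Riemann--Roch outputs but the sizes of the two monodromy orbits among the $16$ fixed classes of the hyperelliptic involution on $\mc{J}^3D$ for a smooth genus-two fibre $D$: the $6$ classes $3p_i$ and the $10$ classes $p_i+p_j+p_k$ with distinct Weierstrass points. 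The two components are separated by the invariant $r(\mc{L})=\dim H^0(D,\mc{L})^{\iota_D}\in\{1,2\}$ (which also shows their closures are disjoint), and connectedness of each requires a degeneration argument across the dual sextic. Without this fibration structure I do not see how your eigensheaf/GRR computation would produce the numbers $6$ and $10$ or settle the component count.
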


The paper is organised as follows. In the first section we collect basic results and fix the notation. 
Section 2 presents the results on classification and deformations of involutions with $19$ parameters. We show that there are four conjugacy classes of the action
on the second cohomology and, except for No. 3, two involutions in the same 
conjugacy class are deformation equivalent. We also give the descriptions of involutions
in No. 3.
In Section 3 some general considerations concerning induced automorphisms on moduli spaces of sheaves are given. The new example is constructed in Section 4. In Section 5 we show that the fixed locus consists of two smooth surfaces; the geometric description of these surfaces also distinguishes our example from the well known natural involutions on Hilbert schemes of two points.\\

\em Acknowledgements: \em 
The authors are grateful to K.\ Hulek and G.\ Mongardi for fruitful discussions and remarks. 
The first-named author wants to thank Leibniz university of Hannover and M.\ Sch\"{u}tt for giving him opportunities of 
visiting and studying. His research is partially supported by the JSPS Grant-in-Aid for Young Scientists (B) 23740010.
The second-named author wants to thank the DFG Research Training Group 1463 for supporting his stay in Japan, the IPMU (Tokyo) 
for excellent hospitality and M.\ Lehn and J.\ Kass for interesting discussions and helpful remarks.

\section{Preliminaries}\label{preliminaries}
In this first section we recall the most important notions needed in the sequel.

\begin{defi}
A compact complex K\"{a}hler manifold $X$ is called an \em irreducible holomorphic symplectic manifold \em (irreducible symplectic
manifold, for short) if $\pi_1(X)=\{1\}$ and $H^0(X,\Omega_X^2)\cong \CC \omega,$ where $\omega$ is a nowhere degenerate holomorphic symplectic two-form.
\end{defi}

\begin{thm}[Fujiki$-$Beauville$-$Bogomolov form]
For every irreducible symplectic manifold $X$ there exists a canonically defined non-degenerate pairing $(\,,\,)_X$ on $H^2(X,\ZZ)$ called \em Beauville$-$Bogomolov \em or sometimes \em Fujiki$-$Beauville$-$Bogomolov pairing.\em
\end{thm}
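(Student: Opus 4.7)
The plan is to construct the pairing in two stages: first define a quadratic form on $H^2(X,\CC)$ using the symplectic form $\omega$, then show that the form is in fact topological, defined over $\Q$, and after a canonical rescaling integral and non-degenerate.

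First I would fix a symplectic form $\omega$ normalised so that $\int_X (\omega\bar\omega)^n = 1$, where $\dim_\CC X = 2n$, and define on $H^2(X,\CC)$
\[
\tilde q(\alpha) = \frac{n}{2}\int_X \alpha^2 (\omega\bar\omega)^{n-1} + (1-n)\left(\int_X \alpha\,\omega^{n-1}\bar\omega^{n}\right)\left(\int_X \alpha\,\omega^{n}\bar\omega^{n-1}\right).
\]
Using the Hodge decomposition $H^2(X,\CC) = H^{2,0}\oplus H^{1,1}\oplus H^{0,2}$ and the fact that $\dim H^{2,0}=\dim H^{0,2}=1$, the cross terms in $\tilde q(\alpha+\beta)-\tilde q(\alpha)-\tilde q(\beta)$ define a symmetric bilinear form $\tilde{(\,,\,)}$. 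One checks by a direct expansion (separating contributions of type $(p,q)$) that $\tilde q$ is independent of the choice of normalised $\omega$.

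The key intermediate step is establishing the \emph{Fujiki relation}: there exists a positive rational constant $c_X$ with
\[
\int_X \alpha^{2n} = c_X\, \tilde q(\alpha)^n \qquad \text{for all } \alpha\in H^2(X,\CC).
\]
This is proved by expanding $\alpha^{2n}$ along the Hodge decomposition, noting that only monomials with equal numbers of $H^{2,0}$- and $H^{0,2}$-entries survive integration, and comparing coefficients with the definition of $\tilde q$. Because the left-hand side is an integer cup-product polynomial in $\alpha$ while the right-hand side is the $n$-th power of a quadratic form, the relation forces $\tilde q(\alpha)$ to be rational whenever $\alpha\in H^2(X,\Z)$ (modulo an $n$-th root of unity that is pinned down by continuity and positivity on a K\"ahler class). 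After clearing denominators one obtains a uniquely determined primitive integral quadratic form, which I call $(\,,\,)_X$; its canonicity is immediate from the Fujiki relation since any other form satisfying it differs by an $n$-th root of unity, hence by $\pm 1$ once positivity is fixed.

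Finally, non-degeneracy follows from computing the signature over $\R$: the real and imaginary parts of $\omega$ together with any K\"ahler class $\kappa$ span a positive-definite three-plane, while the primitive $(1,1)$-part is negative-definite by the Hodge--Riemann bilinear relations applied to $\kappa^{2n-2}$; thus $\tilde q$, and hence $(\,,\,)_X$, has signature $(3,b_2(X)-3)$ and is in particular non-degenerate. The principal obstacle in this argument is the rationality/integrality descent in the previous paragraph: passing from the transcendentally defined $\tilde q$ to an intrinsic integral pairing requires the Fujiki relation together with a careful extraction of $n$-th roots, and checking that the resulting form is independent of all auxiliary choices is where the bulk of the work lies.
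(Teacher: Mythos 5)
The paper does not prove this statement --- it is quoted as a classical background theorem (Beauville \cite{Beau}, Fujiki) --- so your proposal has to stand on its own. Its architecture is the standard one: your formula for $\tilde q$ is exactly Beauville's, the descent to a primitive integral form via the Fujiki relation is the right strategy, and the signature computation $(3,b_2-3)$ via a Kähler class, $\mathrm{Re}\,\omega$, $\mathrm{Im}\,\omega$ and the Hodge--Riemann relations is correct and does give non-degeneracy.

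The genuine gap is your proof of the Fujiki relation. Expanding $\alpha^{2n}$ for $\alpha=\lambda\sigma+\mu\bar\sigma+\beta$ along the Hodge decomposition only tells you that the surviving monomials are those of type $(2n,2n)$, i.e.\ $\int_X\alpha^{2n}=\sum_i c_i(\lambda\mu)^i\int_X(\sigma\bar\sigma)^i\beta^{2n-2i}$; it gives no relation whatsoever among the integrals $\int_X(\sigma\bar\sigma)^i\beta^{2n-2i}$ for different $i$, and for $n\ge 2$ the identity $\int_X\beta^{2n}=c\,\tilde q(\beta)^n$ on $H^{1,1}$ is emphatically not a linear-algebra fact --- it fails on general Kähler manifolds with $h^{2,0}=1$. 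The missing input is deformation-theoretic: one uses that $\int_{X_t}\sigma_t^{2n}=0$ for every small deformation $X_t$ (for type reasons on $X_t$) together with the local Torelli theorem, which shows that the periods $[\sigma_t]$ fill an open subset of the quadric $\{\tilde q=0\}$; hence the degree-$2n$ polynomial $f(\alpha)=\int_X\alpha^{2n}$ vanishes on that irreducible quadric, so $\tilde q$ divides $f$, and an induction yields $f=c\,\tilde q^{\,n}$. Without this global ingredient the relation, and with it your rationality descent, does not get off the ground. (Minor point: once $f=c\,\tilde q^{\,n}$ with $f$ defined over $\Q$ and $\tilde q$ irreducible, the cleanest way to see that $\tilde q$ is a scalar multiple of a rational form is unique factorisation in $\Q[x_1,\dots,x_{b_2}]\otimes\C$, rather than chasing $n$-th roots of unity; positivity on a Kähler class then fixes the sign and primitivity fixes the scale.)
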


The most important examples of irreducible symplectic manifolds for this article are moduli spaces of sheaves on $K3$ surfaces. Let $S$ be a projective $K3$ surface. Mukai defined a lattice structure on
\[H^\ast (S,\ZZ)=H^0(S,\ZZ)\oplus H^2(S,\ZZ)\oplus H^4(S,\ZZ)\]
by setting 
\[(r_1,l_1,s_1).(r_2,l_2,s_2):=l_1.l_2-r_1s_2-r_2s_1.\]
We denote this lattice by $\widetilde{H}(S,\ZZ)$ and call it the \em Mukai lattice \em of $S$. Furthermore we can introduce a weight-two Hodge structure on $\widetilde{H}(S,\ZZ)$ by defining $\widetilde{H}^{1,1}:=H^0(S)\oplus H^{1,1}(S)\oplus H^4(S).$

Let $\mc{F}$ be a coherent sheaf on $S$. We can define its \em Mukai vector \em by 
\[v(\mc{F}):=\ch(\mc{F})\sqrt{\td_S}=(r_\mc{F},c_1(\mc{F}),r_\mc{F}+\ch_2(\mc{F})),\]
where $r_\mc{F},$ $c_1(\mc{F})$ and $\ch_2(\mc{F})$ denote the rank, the first Chern class and the $H^4$ component of the Chern character of $\mc{F},$ respectively. Now fix an element $v\in \widetilde{H}(S,\ZZ)$ and a polarisation $H\in\NS(S)$ and let $\mc{M}(v)$ denote the moduli space of $H$-semistable sheaves $\mc{F}$ satisfying $v(\mc{F})=v$. Denote by $\mc{M}^s(v)$ the open subset of $\mc{M}(v)$ parametrising only $H$-stable sheaves. If we want to emphasise the dependence on the polarisation $H$ we write $\mc{M}_H(v)$ for $\mc{M}(v)$.

\begin{rem}
Note that, using the same definition, the Mukai vector of a sheaf can be defined for any smooth variety $X$: For a sheaf $\mc{F}$ on $X$ we set $v(\mc{F}):=\ch(\mc{F})\sqrt{\td_X}\in H^\ast(X,\mathbb{Q}).$
\end{rem}

A special case of moduli spaces of sheaves are the Hilbert schemes of $n$ points on the surface, denoted by $\Hilb^n(S).$ The corresponding Mukai vector is $(1,0,-n+1)$ and for $n\geq 2$ we have an isometry of lattices
\[H^2(\Hilb^n(S),\ZZ) \cong H^2(S,\ZZ)\oplus \langle-2(n-1)\rangle\cong U^3\oplus E_8^2\oplus \langle -2(n-1)\rangle,\]
where on the left hand side the lattice structure is given by the Beauville$-$Fujiki form. 
For the notation of lattices, see below.
Irreducible symplectic manifolds which are deformation equivalent to $\Hilb^n(S)$ are called \em manifolds of $K3^{[n]}$-type.\em

By the work of Mukai, Huybrechts and O'Grady we have the following result:
\begin{thm}\label{thmdefeqhilb}
Let $(S,H)$ be a polarised $K3$-surface, $l\in\NS (S)$ a primitive class and $v=(r,l,s)\in\widetilde{H}(S,\ZZ)$ a Mukai vector such that $r\geq0$ and in the case $r=0,$ the class $l$ is effective. If furthermore $\mc{M}^s(v)=\mc{M}(v)$, then it is an irreducible symplectic manifold of dimension $v^2+2$ which is deformation equivalent to $\Hilb^n(S)$ with $n=\frac{1}{2}(v^2+2).$
\end{thm}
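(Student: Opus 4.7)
The plan is to combine three ingredients, each due to different authors: Mukai's local analysis gives smoothness and the symplectic form, Yoneda/Serre duality identifies the dimension, and Huybrechts--O'Grady--Yoshioka-style deformation arguments provide the final identification with $\Hilb^n(S)$.

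First I would establish that $\mc{M}(v)$ is smooth of dimension $v^2+2$ and carries a holomorphic symplectic structure. At a stable sheaf $\mc{F}\in\mc{M}^s(v)$, Mukai's deformation theory identifies the tangent space with $\Ext^1(\mc{F},\mc{F})$ and puts obstructions in the trace-free part $\Ext^2(\mc{F},\mc{F})_0$. Stability forces $\Hom(\mc{F},\mc{F})=\CC$, and Serre duality on the $K3$ surface gives $\Ext^2(\mc{F},\mc{F})\cong\Hom(\mc{F},\mc{F})^\ast=\CC$, so the trace-free part vanishes and all obstructions are killed. Using the hypothesis $\mc{M}^s(v)=\mc{M}(v)$ this gives smoothness everywhere; Hirzebruch--Riemann--Roch and the definition of the Mukai pairing then yield $\dim\Ext^1=2-\chi(\mc{F},\mc{F})=v^2+2$. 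The symplectic form is produced pointwise by the composition
\[\Ext^1(\mc{F},\mc{F})\otimes\Ext^1(\mc{F},\mc{F})\xrightarrow{\smile}\Ext^2(\mc{F},\mc{F})\xrightarrow{\tr}H^2(S,\mc{O}_S)\cong\CC,\]
which is non-degenerate by Serre duality and extends to a global closed $2$-form by Mukai's original argument.

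The core of the argument is then the deformation equivalence with $\Hilb^n(S)$. My strategy is to work in a relative setting over the moduli space of polarised $K3$ surfaces of the appropriate degree together with a Mukai vector extending $v$: since this base is irreducible, any two fibres $\mc{M}_{H'}(v')$ are deformation equivalent. The task reduces to finding, inside this family, a single fibre that can be identified with a Hilbert scheme. This is achieved by combining two classical tools. First, Huybrechts' theorem that birational irreducible symplectic manifolds are deformation equivalent allows us to replace $\mc{M}(v)$ by any birational model. Second, one uses elementary operations on $v$ — twisting by line bundles, Fourier--Mukai transforms (when the rank or first Chern class allows it), and the wall-crossing analysis of O'Grady and Yoshioka — to move $v$ into the orbit of the distinguished vector $(1,0,-n+1)$, for which the moduli space is literally $\Hilb^n(S)$. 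The primitivity of $l$ and the equality $\mc{M}^s=\mc{M}$ guarantee that throughout this process we stay inside the locus where the moduli space is smooth and projective, so no degenerate fibres appear.

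Once $\mc{M}(v)$ is known to be deformation equivalent to $\Hilb^n(S)$, the remaining properties in the definition of an irreducible symplectic manifold — simple connectedness and $h^{2,0}=1$ — are inherited automatically, since these are deformation invariants and hold for the Hilbert scheme.

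The main obstacle is the second step: producing the chain of deformations and birational modifications that moves an arbitrary admissible $v$ to $(1,0,-n+1)$. Handling the case $r=0$ (where Yoshioka's analysis of moduli of pure one-dimensional sheaves is needed) and the wall-and-chamber structure as the polarisation $H$ varies are the delicate parts; the primitivity of $l$ and the stability assumption are crucial precisely to avoid singular walls and ensure that a generic polarisation in each chamber gives a smooth moduli space deformation equivalent to every other.
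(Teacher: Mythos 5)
The paper does not prove this statement: it is quoted as a known background result (attributed to Mukai, Huybrechts and O'Grady) with a pointer to \cite[Sect.\ 6.2]{HL} for the history, so there is no internal proof to compare against. Your sketch follows exactly the standard route from that literature: Mukai's deformation theory for smoothness, the symplectic form and the dimension count $\dim\Ext^1(\mc{F},\mc{F})=2-\chi(\mc{F},\mc{F})=v^2+2$ (all of which you state correctly), and then the O'Grady--Yoshioka chain of deformations and Fourier--Mukai/reflection arguments reducing a general primitive $v$ to $(1,0,-n+1)$. Two caveats on the second step, which is where essentially all of the work lives and where your write-up is schematic. First, there is a mild circularity in invoking Huybrechts' theorem that birational irreducible symplectic manifolds are deformation equivalent: at that stage you only know $\mc{M}(v)$ is a smooth projective holomorphic symplectic variety, not yet that it is irreducible symplectic, so the standard arguments instead produce an explicit connected family of smooth compact moduli spaces linking $\mc{M}(v)$ to a Hilbert scheme and only afterwards transport $\pi_1=1$ and $h^{2,0}=1$ by deformation invariance (as you do at the end). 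Second, ``move $v$ into the orbit of $(1,0,-n+1)$'' conceals the genuinely delicate case analysis (large $c_2$ specialisation for $r>0$, Yoshioka's treatment of $r=0$, and control of $v$-genericity of $H$ as $(S,H)$ deforms); as a proposal this is acceptable since you name the right tools, but none of it is routine.
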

For a survey of the history and contributions of this result we refer to \cite[Sect.\ 6.2]{HL}.\\

Of course, in general it may be very difficult to ensure that there are no strictly semistable sheaves in $\mc{M}(v)$. We have the following general result (cf.\ Sect.\ 4.C in \cite{HL}):
\begin{prop}
Let $v=(r,l,s)$ be a Mukai vector such that $l\in\NS(S)$ is primitive. There is a locally finite set of hyperplanes --- so-called 'walls' --- inside the ample cone of $S$ such that $\mc{M}_H(v)=\mc{M}^s_H(v)$ if $H$ does not lie on any wall. Such an $H$ is called \em $v$-general.\em
\end{prop}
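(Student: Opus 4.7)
The plan is to describe walls as explicit linear conditions on $H$ coming from potentially destabilising subsheaves, and then to use the Bogomolov inequality together with the Hodge index theorem to show local finiteness. Suppose $\mc F$ with $v(\mc F)=v=(r,l,s)$ is $H$-semistable but not $H$-stable. Then there exists a proper non-trivial subsheaf $\mc E\subset\mc F$ with the same reduced Hilbert polynomial; write $v(\mc E)=(r',l',s')$. In the case $r>0$ (torsion sheaves are handled analogously after replacing the slope by $\chi/(l.H)$), equating the leading terms of the reduced Hilbert polynomials gives the slope equality $\mu_H(\mc E)=\mu_H(\mc F)$, which rearranges to $\xi.H=0$ for $\xi:=rl'-r'l\in\NS(S)$.

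Next, I would invoke the primitivity of $l$ to guarantee that $\xi\neq 0$. Indeed $\xi=0$ forces $rl'=r'l$; primitivity of $l$ then gives $r\mid r'$, contradicting $0<r'<r$. Hence each strictly $H$-semistable sheaf with Mukai vector $v$ produces a non-zero class $\xi\in\NS(S)$, and the corresponding $H$ lies on the hyperplane $\xi^{\perp}\subset\NS(S)_{\R}$. So the set of potentially bad polarisations is covered by the family of hyperplanes $\{\xi^{\perp}\}$, with one hyperplane for each $\xi$ that could arise from some destabilising $\mc E$.

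The remaining and most delicate point is local finiteness. Here I would apply Bogomolov's inequality to both $\mc E$ and $\mc F/\mc E$, which are themselves $H$-semistable with the same reduced Hilbert polynomial. Combining $\Delta(\mc E)\ge 0$ and $\Delta(\mc F/\mc E)\ge 0$ with the standard identity relating these discriminants to $\xi^2$, $v^2$ and the ranks yields a two-sided bound of the shape $-C(r,v^2)\le \xi^2\le 0$ on the Néron-Severi lattice. Since the intersection form on $\NS(S)$ has signature $(1,\rho-1)$, the Hodge index theorem implies that, for any compact neighbourhood $K$ of a point of the positive cone, only finitely many lattice vectors $\xi$ can simultaneously satisfy this square bound and $|\xi.H|\le\epsilon$ uniformly for $H\in K$. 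This gives the local finiteness of walls, and the definition then forces $\mc M_H(v)=\mc M_H^s(v)$ for every $H$ off the walls.

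The main obstacle is exactly this combinatorial bookkeeping: the existence and linearity of individual walls is essentially formal, but showing that the collection accumulates only at infinity requires the simultaneous use of Bogomolov, to bound the discrete data of potential destabilising Mukai vectors, and of Hodge index, to convert the resulting bound on $\xi^2$ into genuine finiteness on compact subsets of the ample cone.
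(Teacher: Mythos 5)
Your argument is correct and is essentially the standard wall-and-chamber argument of Huybrechts--Lehn, Sect.\ 4.C, which is exactly the reference the paper cites in lieu of a proof: walls cut out by $\xi=rl'-r'l$, non-vanishing of $\xi$ from primitivity of $l$, the Bogomolov/discriminant bound $0>\xi^2\ge -\tfrac{r^2}{4}\Delta(v)$, and local finiteness via the Hodge index theorem. No substantive gap; only the rank-zero case, which you dispose of parenthetically, would need the analogous bookkeeping spelled out.
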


Furthermore, in \cite{O'G3} O'Grady studied the Hodge-structure on the second cohomology of $\mc{M}(v)$:
\begin{thm}\label{thmogrady}
Under the conditions of Theorem \ref{thmdefeqhilb} we have an isomorphism of integral Hodge structures and an isometry of lattices
\[H^2(\mc{M}(v),\ZZ)\cong v^{\perp}\subset\widetilde{H}(S,\ZZ).\] 
\end{thm}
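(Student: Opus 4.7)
I would follow Mukai and O'Grady's approach by constructing the \emph{Mukai morphism}
\[
\theta_v : v^\perp \longrightarrow H^2(\mc{M}(v), \ZZ)
\]
and verifying that it is a bijective Hodge isometry. Under the hypotheses of Theorem \ref{thmdefeqhilb} the moduli space admits at worst a quasi-universal family $\mc{E}$ of some similitude $\rho$ on $S \times \mc{M}(v)$, and for $\alpha \in v^\perp$ I would set
\[
\theta_v(\alpha) := \frac{1}{\rho}\bigl[\, p_{\mc{M}*}\bigl( p_S^{*}(\alpha^{\vee}) \cdot v(\mc{E}) \bigr)\bigr]_{H^2},
\]
where $(\cdot)^{\vee}$ is the Mukai involution $(r,l,s) \mapsto (r,-l,s)$ on $\widetilde{H}(S,\ZZ)$ and $[\,\cdot\,]_{H^2}$ extracts the degree-two component. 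The condition $\alpha \perp v$ is precisely what forces $\theta_v(\alpha)$ to be independent of the chosen quasi-universal family, since twisting $\mc{E}$ by a pullback line bundle from $\mc{M}(v)$ alters the formula by a multiple of $(v,\alpha)=0$.

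\textbf{Hodge isometry.} Because $v(\mc{E})$ is an algebraic class on $S \times \mc{M}(v)$, the push-pull formula respects the Hodge decomposition, so $\theta_v$ is a morphism of pure integral Hodge structures of weight two. To show it is an isometry I would apply Grothendieck$-$Riemann$-$Roch and express the Beauville$-$Bogomolov pairing of $\theta_v(\alpha)$ and $\theta_v(\beta)$ as an Euler characteristic involving $\ch(\mc{E}) \cdot \ch(\mc{E})^{\vee}$ integrated against $\alpha^{\vee} \cdot \beta^{\vee}$ on $S$; stability of the parametrised sheaves lets one identify the resulting $\Ext$-contribution, and one recovers the Mukai pairing $(\alpha,\beta)_{\widetilde{H}(S,\ZZ)}$ up to a universal sign.

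\textbf{Bijectivity.} Injectivity now follows from the isometry property and the non-degeneracy of the Mukai form. For surjectivity, I would observe that both lattices have rank $23$ and the same discriminant: $\widetilde{H}(S,\ZZ)$ is unimodular and $v$ is primitive with $v^2 = 2(n-1)$, so $v^\perp$ has discriminant $2(n-1)$, matching the discriminant of $H^2(\mc{M}(v),\ZZ) \cong U^3 \oplus E_8^2 \oplus \langle -2(n-1)\rangle$ supplied by Theorem \ref{thmdefeqhilb}. An isometric embedding of lattices with equal rank and discriminant is automatically an isomorphism. Alternatively, one can argue by deformation: the pair $(S,v)$ can be deformed inside its moduli so that $v = (1,0,1-n)$ and $\mc{M}(v) = \Hilb^n(S)$, where $\theta_v$ functorially specialises to Beauville's classical isomorphism $H^2(S,\ZZ) \oplus \langle -2(n-1)\rangle \cong H^2(\Hilb^n(S),\ZZ)$.

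The step I expect to be most delicate is the Grothendieck$-$Riemann$-$Roch comparison of the two pairings, combined with the technical handling of the (quasi-)universal family: ensuring that $\theta_v$ is independent of all auxiliary choices and that the Chern-character bookkeeping on $S \times \mc{M}(v)$ produces exactly the Mukai pairing (with the correct sign) is the real content of the argument; the Hodge-theoretic and lattice-theoretic parts are, by comparison, formal.
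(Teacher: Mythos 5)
The paper does not prove this statement at all: it is quoted verbatim from O'Grady \cite{O'G3} (with the general case due to Yoshioka), so there is no internal proof to compare against. Your plan is the standard Mukai--O'Grady strategy and the construction of $\theta_v$, the well-definedness via $(v,\alpha)=0$, and the rank-plus-discriminant argument for surjectivity are all correct (note $\sqrt{\td_S}=(1,0,1)$ is integral, though the $1/\rho$ factor means integrality of $\theta_v$ itself still needs a separate argument, which O'Grady supplies).

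The step I would push back on is the isometry. Your proposal to ``apply Grothendieck--Riemann--Roch and express the Beauville--Bogomolov pairing \ldots as an Euler characteristic'' is how Mukai's argument works when $v^2=0$ and $\mc{M}(v)$ is a surface, because there the relevant pairing \emph{is} the intersection form and is accessible through $\chi(\mc{E}_x,\mc{E}_y)$. For $v^2>0$ the Beauville--Bogomolov form on the fourfold (or $2n$-fold) is not the cup-product pairing and is not computed by any Euler characteristic on $S\times\mc{M}(v)$; GRR gives you classes in $H^4$ and higher, not the quadratic form $q$. The actual proofs establish the isometry by exactly the mechanism you relegate to an ``alternative'' for surjectivity: deform $(S,H,v)$ through the (connected) moduli of such triples to the Hilbert-scheme case $v=(1,0,1-n)$, where Beauville's and Lehn's explicit description of $H^2(\Hilb^n(S),\ZZ)$ and its BB form identifies $\theta_v$ with the known isomorphism, and then use that both $\theta_v$ and the BB form are locally constant in such families. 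So the deformation argument is not an optional shortcut for bijectivity; it is the engine of the isometry statement itself, and your GRR step as written would not go through.
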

\vspace{3mm}

For terminologies in lattice theory, our basic reference is \cite{Nik}. Here we recall the most important definitions in order to fix notation. 
A free abelian group $L$ of finite rank equipped with a (non-degenerate) 
symmetric bilinear form $(\,,\,)\colon L\times L\rightarrow \Z$ 
is called a {\em{lattice}}. We treat only {\em{even lattices}}, namely those which satisfy $(l^2)\in 2\Z$ for $l\in L$. 
The symbol $\oplus$ denotes the orthogonal direct sum of lattices.
We denote by $O(L)$ the group of isometries of $L$. 
Since $L$ is non-degenerate, the homomorphism $L\rightarrow L^*\colon l\mapsto (l,\cdot)$ is injective and yields the
residue group $A_L=L^*/L$, called the {\em{discriminant group}} of $L$. The finite group $A_L$ is naturally equipped with 
a quadratic form $q_L$ and a bilinear form $b_L$, see \cite{Nik} for the details. 
A lattice $L$ is {\em{unimodular}} if $|A_L|=1$. The unique even unimodular lattice of signature $(1,1)$ (resp. $(0,8)$) 
is denoted by $U$ (resp. $E_8$). The rank one lattice whose square of the generator is $n$ is 
denoted by $\langle n \rangle$. The latter will never be unimodular if $n$ is even. 
%The order of $A_L$ is equal to 
%the absolute value of the determinant of the Gram matrix of $L$. 
%On the finite group $A_L$, we have the induced quadratic form $q_L\colon A_L\rightarrow \Q /2\Z$ and the 
%bilinear form $b_L\colon A_L\times A_L \rightarrow \Q /\Z$. 
%The group $O(q_L)$ denotes all automorphisms of $A_L$ 
%which preserve the forms $q_L$ and $b_L$. 

The {\em{divisor}} or {\em{divisibility}} $\div_L (l)\in \mathbb{Z}_{\geq 0}$ of 
an element $l$ in a nondegenerate lattice $L$ is defined by 
\[(\div_L (l)) \mathbb{Z}=(l,L)=\{(l,l')\mid l'\in L\}\subset \Z.\quad \]
By definition $l^{*}:= l/\div_L(l)$ defines an element in the discriminant group $A_L$. 
The following criterion, usually referred to as {\em{Eichler's criterion,}} will be used repeatedly.
\begin{prop}\label{eichler}
Let $L$ be an even lattice containing $U\oplus U$ as a sublattice. Let $v,v'$ be primitive elements in $L$. 
If $(v^2)=(v'^2)$ and $v^*=(v')^*\in A_L,$ then there exists an isometry $\varphi\in O(L)$ which sends $v$ to 
$v'$. 
\end{prop}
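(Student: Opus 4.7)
The plan is to show that any two primitive vectors in $L$ with equal square and equal image in $A_L$ can be brought to a common canonical form by a sequence of \emph{Eichler transvections}. For an isotropic $e \in L$ and $a \in e^\perp$, define
\[E_{e,a}(x) := x + (x,e)\,a - (x,a)\,e - \tfrac{1}{2}(a,a)(x,e)\,e;\]
the coefficient $\tfrac{1}{2}(a,a)$ is an integer because $L$ is even. A direct bilinearity check shows $E_{e,a}$ is an isometry of $L$ fixing $e$, and it acts as the identity on $A_L$: for any $y \in L^*$, the correction $E_{e,a}(y)-y$ is a $\ZZ$-linear combination of $e$ and $a$ (using $(e,y),(a,y) \in \ZZ$) and hence lies in $L$. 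Thus any product of such transvections preserves both $v^2$ and the class $v^* \in A_L$.

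I would then fix a decomposition $L = U_1 \oplus U_2 \oplus K$, with hyperbolic bases $(e_i,f_i)$ of $U_i$ and $K = (U_1 \oplus U_2)^\perp$, and write $v = \alpha_1 e_1 + \beta_1 f_1 + \alpha_2 e_2 + \beta_2 f_2 + w$ with $w \in K$. The key computation is that $E_{e_1, x}$ for $x \in \langle e_1 \rangle^\perp = \langle e_1 \rangle \oplus U_2 \oplus K$ sends $v$ to $v + \beta_1 x$ plus a multiple of $e_1$, and analogously for the other three basis transvections. Iterating such transvections, the plan is to bring $v$ into the canonical form
\[\widetilde{v} = d\,e_1 + m\,f_1 + w_0, \qquad m = \tfrac{v^2 - w_0^2}{2d},\]
where $d = \div_L(v)$ and $w_0 \in K \cap dL^*$ is a representative of $v^* \in A_L$. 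Using $K \cap dL = dK$, the element $w_0$ is uniquely determined modulo $dK$ by $v^*$, so the form $\widetilde{v}$ depends only on $v^2$ and $v^*$. Running the same procedure for $v'$ gives the same $\widetilde{v}' = \widetilde{v}$, and composing the two sequences of transvections produces the required $\varphi \in O(L)$ with $\varphi(v) = v'$.

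The main obstacle is the iterative reduction itself: one must arrange, in order, that the $U_2$-components of $v$ vanish, that the $K$-component $w$ is reduced modulo $dK$, and that the remaining $U_1$-coefficients $(\alpha_1,\beta_1)$ become exactly $(d,m)$. This requires a careful bookkeeping of the gcds among the coefficients $\alpha_1,\beta_1,\alpha_2,\beta_2$ and of the divisibility of $w$ in $K$, so that all of them collapse to $d = \div_L(v)$. Here the hypothesis $U \oplus U \subset L$ (rather than merely $U \subset L$) is essential: the second hyperbolic plane provides the extra isotropic room needed to move weight between the $U_1$-factor and $K$ with the required divisibility conditions, and without it the unique canonical form cannot be achieved.
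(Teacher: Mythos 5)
Your overall route --- Eichler transvections $E_{e,a}$, the verification that they are isometries acting trivially on $A_L$, and the reduction of a primitive vector to the canonical form $d e_1+m f_1+w_0$ with $d=\div_L(v)$ and $w_0$ a representative of $v^*$ --- is exactly the standard proof of this statement, and it is in substance the argument behind the reference the paper cites (Scattone, Prop.\ 3.7.3); the paper itself gives no proof beyond that citation. Your preliminary computations are correct, and the canonical form is well posed (in particular $m\in\ZZ$ follows from $q_L(v^*)\equiv (v^2)/d^2 \bmod 2\ZZ$). However, as written the proposal defers the entire technical content of the proposition. The transitivity of the transvection group on primitive vectors with fixed $(v^2, v^*)$ \emph{is} the theorem; everything you have actually verified (isometry, triviality on $A_L$, well-definedness of $\widetilde v$) only shows that the invariants are preserved and that a target exists, not that it is reached.

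Concretely, the missing step is the following chain, none of which is routine bookkeeping. (i) Writing $v=\alpha_1e_1+\beta_1f_1+u$ with $u\in U_2\oplus K$, the transvections $E_{e_1,a}$ and $E_{f_1,a}$ ($a\in U_2\oplus K$) change $u$ by $\beta_1 a$ and $\alpha_1 a$ modulo $U_1$, so $u$ can only be translated by $\gcd(\alpha_1,\beta_1)\,(U_2\oplus K)$. To reduce $u$ modulo $d(U_2\oplus K)$ you must therefore \emph{first} prove that transvections can arrange $\gcd(\alpha_1,\beta_1)=d$; this is the crux, it is a prime-by-prime argument, and it is precisely where the second hyperbolic plane $U_2$ is used (one shuttles divisibility between $U_1$ and $U_2$ via $E_{e_2,\cdot}$, $E_{f_2,\cdot}$). (ii) Only after that can you kill the $U_2$-component and normalise $w$ modulo $dK$. (iii) Finally, adjusting $(\alpha_1,\beta_1)$ to exactly $(d,m)$ requires realising the relevant $\mathrm{SL}_2(\ZZ)$-type moves on the $U_1$-coordinates by transvections anchored in $U_2$, again not by transvections internal to $U_1$. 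Your last paragraph names these issues but does not resolve them, so the argument is a correct plan with its central lemma unproved. Either carry out step (i) in detail or, as the paper does, cite it (Scattone loc.\ cit., or Eichler's original treatment, or Gritsenko--Hulek--Sankaran where this reduction is written out).
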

\begin{proof}
\cite[Prop.\ 3.7.3]{scattone}.
\end{proof}

\vspace{3mm}

In this article we are interested in non-symplectic involutions on irreducible symplectic manifolds of $\ta{K3}$-type. 
Beauville (\cite{Beau2}) studied non-symplectic involutions in general, showing that the fixed locus 
consists of smooth Lagrangian submanifolds and 
gave the first systematic classification for manifolds of $\ta{K3}$-type as follows.
\begin{thm}[Beauville]\label{thmbeau}
Let $X$ be an irreducible symplectic fourfold with $b_2(X)=23$, $\sigma$ a non-symplectic involution on $X$ and 
$F$ the fixed surface. We denote by $e(X)$ the topological Euler characteristic. 
Let $t$ be the trace of $\sigma^*$ acting on $H^{1,1}(X)$. 
\begin{enumerate}
\item We have $K_F^2=t^2-1,\quad \chi (\mathcal{O}_F)=\frac{1}{8}(t^2+7),\quad e(F)=\frac{1}{2}(t^2+23)$. 
\item The local deformation space of $(X,\sigma)$ is smooth of dimension $\frac{1}{2}(21-t)$.
\item The integer $t$ takes any odd value between $-19$ and $21$. 
\end{enumerate}
\end{thm}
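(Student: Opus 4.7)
The plan is to combine the holomorphic and topological Lefschetz fixed-point formulas, exploiting the earlier result that $F$ is a smooth Lagrangian submanifold. The Lagrangian condition gives an isomorphism $N_F\cong\Omega^1_F$ (contraction with $\omega$), and $\sigma$ necessarily acts as $-1$ on $N_F$ since $F$ is the $+1$-eigenlocus of $d\sigma$.

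For (1) I first apply the holomorphic Lefschetz theorem to $\mc{O}_X$. Because $\sigma$ acts by $(-1)^k$ on $\Lambda^k N_F^*$, the equivariant Chern character of $\Lambda_{-1}N_F^*$ reduces to $\prod_j(1+e^{\delta_j})$, where $\delta_j$ are the Chern roots of $T_F$. Expanding against $\td(F)$ and extracting the degree-two piece on $F$ produces the localization integral $-\tfrac{K_F^2}{24}+\tfrac{e(F)}{12}$. On the other side, non-symplecticity ($\sigma^*\omega=-\omega$) makes $\sigma^*$ act as $+1,-1,+1$ on $H^{0,i}(X)$ for $i=0,2,4$, so $\chi(\sigma,\mc{O}_X)=1$. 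This gives the identity $2e(F)-K_F^2=24$. Next I apply the topological Lefschetz formula $e(F)=\sum_i(-1)^i\tr(\sigma^*|H^i(X,\Q))$. On $H^2$ the trace is $t-2$ (losing $1$ on each of $H^{2,0},H^{0,2}$); for $H^4$ I use the isomorphism $H^4(X,\Q)\cong\mathrm{Sym}^2 H^2(X,\Q)$ of Hodge structures valid for $K3^{[2]}$-type — a consequence of Verbitsky's structure theorem for the subring generated by $H^2$ together with the dimension match $\binom{24}{2}=276=b_4$ — giving $\tr(\sigma^*|H^4)=\tfrac{1}{2}((t-2)^2+23)$. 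Poincar\'{e} duality yields the $H^6$ and $H^8$ contributions, and summing delivers $e(F)=(t^2+23)/2$. Combining with the holomorphic identity gives $K_F^2=t^2-1$, and Noether's formula then produces $\chi(\mc{O}_F)=(K_F^2+e(F))/12=(t^2+7)/8$.

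For (2), deformations of an IHS manifold are unobstructed (Bogomolov), so the Kuranishi space of $X$ is smooth of dimension $h^{1,1}(X)=21$; by functoriality $\sigma$ acts on this space, and deformations of the pair $(X,\sigma)$ form its fixed locus, hence a smooth subspace. The tangent space is $H^1(X,T_X)^{\sigma}$; contraction with $\omega$ yields an isomorphism $T_X\cong\Omega^1_X$ intertwining $\sigma$ with $-\sigma$ (because $\sigma^*\omega=-\omega$), so $H^1(X,T_X)^{\sigma}\cong H^{1,1}(X)^{\sigma=-1}$, of dimension $(21-t)/2$.

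For (3), parity follows because $\sigma^*$ is an integral involution on the rank-$23$ lattice $H^2(X,\Z)$: its trace $t-2$ has the parity of $23$, hence is odd, so $t$ is odd. The inequalities $-19\leq t\leq 21$ come from Hodge-signature constraints on the eigenspaces of $\sigma^*$ in $H^2(X,\R)$ — the invariant subspace must contain a $\sigma$-invariant K\"{a}hler class (forcing $t\geq-19$), and the anti-invariant subspace must contain the positive plane spanned by $\mathrm{Re}(\omega),\mathrm{Im}(\omega)$ (forcing $t\leq 21$). The main obstacle will be the existence half: realising every admissible odd value requires a concrete geometric example for each $t$, which Beauville supplies by a case-by-case construction (natural involutions on Hilbert schemes, double covers of EPW-sextics, Fano varieties of lines on certain cubics with involution, etc.); producing this complete list is where the real work lies, while the numerical constraints themselves are essentially formal.
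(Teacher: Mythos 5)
Your proposal is correct, but note that the paper does not prove this statement at all --- it is quoted verbatim from Beauville and the ``proof'' is just the citation \cite[Thm.\ 2]{Beau2}. What you have written is essentially a faithful reconstruction of Beauville's own argument: holomorphic Lefschetz applied to $\mathcal{O}_X$ using $N_F\cong\Omega^1_F$ for the Lagrangian fixed surface to get $2e(F)-K_F^2=24$, topological Lefschetz with $H^4\cong\mathrm{Sym}^2H^2$ (for general $b_2=23$ fourfolds this uses Guan's bound $b_4\leq 276$ together with Verbitsky's injectivity, not just the $K3^{[2]}$ case) to get $e(F)=\tfrac12(t^2+23)$, Noether's formula, and the identification of the $\sigma$-fixed Kuranishi space with $(H^{1,1})^{\sigma=-1}$. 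Two small points: in (3) the bound $t\leq 21$ is immediate from $t$ being the trace of an involution on the $21$-dimensional space $H^{1,1}(X)$ --- your appeal to the plane $\langle\mathrm{Re}\,\omega,\mathrm{Im}\,\omega\rangle$, which lies outside $H^{1,1}$, does not actually give it --- while the lower bound correctly uses the invariant K\"ahler class $\kappa+\sigma^*\kappa$; and, as you say, the realisation of every odd value is the part requiring Beauville's list of explicit examples.
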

\begin{proof}
\cite[Thm.\ 2]{Beau2}.
\end{proof}

By this theorem, the maximal dimension of a family of non-symplectic involutions is $20$.
In fact, the double covers of EPW-sextics studied by O'Grady (\cite{O'G}) constitute a locally complete family of 
polarized irreducible symplectic manifolds of $\ta{K3}$-type, which attains this maximal dimension. 
The fixed surface is connected with the above characters for $t=-19$. 
We remark that Beauville's involutions (see \cite[Example 2.7]{Huy}) also 
belong to this family (\cite{O'G4}). 

In the second maximal case, namely for families of non-symplectic involutions of dimension $19$, we get
the following numerical characters.
\[K_F^2=288,\quad \chi (\mathcal{O}_F)=37,\quad e(F)=156.\]
The simplest example is discussed in \cite{Beau2}: Let $S\rightarrow \mathbb{P}^2$ be a double cover 
branched along a smooth curve $C\subset \mathbb{P}^2$ of degree $6$ and genus $10$. Let $\varphi$ be the covering
involution acting on $S$.
Then $\varphi$ induces a non-symplectic involution on $X=\Hilb^2(S),$ which moves in a family of dimension $19$. 
This involution is denoted by $\varphi^{[2]}$ and called the \em natural involution \em associated with $\varphi$ (\cite{Boi}).
The fixed surface $F$ is the disjoint union of $\Hilb^2 (C)$ and $\mathbb{P}^2=S/\varphi$.

In the following sections we study non-symplectic involutions with $19$-dimensional deformations in more detail.
(Equivalently we call them involutions with $19$ parameters.)
In particular we give a finer classification of them, study the deformation equivalence and realise another one of 
them using moduli spaces of sheaves on $K3$ surfaces.

\section{Deformation Equivalence of Involutions with $19$ Parameters}

We begin with clarifying the equivalence of automorphisms and its direct consequences.
\begin{defi}\label{deformation}
Let $X$ be a compact complex manifold and $\varphi \in \mathrm{Aut}(X)$.
{\em{A deformation of $\varphi$}} is a smooth family $f \colon \mathcal{X}\rightarrow \Delta$ of 
compact complex manifolds with $\mathcal{X}_0\simeq X$ 
together with an automorphism $\Phi\in \mathrm{Aut}(\mathcal{X})$ acting fibrewise such that we have $\Phi_0=\Phi|_{\mathcal{X}_0}\simeq \varphi$ at the base point $0\in \Delta$.
If we do not specify the base point, we simply call it a \em deformation \em (or a \em family\em ) \em of automorphisms. \em

Automorphisms $\varphi\in \mathrm{Aut}(X)$ and $\psi\in \mathrm{Aut}(Y)$ are {\em{connected by a deformation}} if 
there is a deformation of $\varphi$ 
which is at the same time a deformation of $\psi$ at another base point. They are said to be {\em{deformation equivalent}} if 
they can be connected via finitely many deformations of automorphisms.
\end{defi}
The following observations yield some invariants of deformation equivalence classes.
\begin{prop}\label{basic}
Let $\mathcal{X}\rightarrow \Delta$ and $\Phi\in \mathrm{Aut}(\mathcal{X})$ be a deformation of 
automorphisms, where we assume that $\Delta$ is connected.
\begin{enumerate}
\item The cohomology representations $\Phi_t^*$ and  $\Phi_s^*$ are conjugated by a parallel transport operator along 
a path connecting $t,s\in \Delta$. 
\item If $\Phi$ is of finite order, the collection of fixed loci $\{\mathrm{Fix}(\Phi_t)|t\in \Delta\}$ is a deformation of complex manifolds. 
In particular, the diffeomorphism type of $\mathrm{Fix}(\Phi_t)$ is constant.
\end{enumerate}
\end{prop}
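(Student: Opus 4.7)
The plan is to reduce both parts to Ehresmann's fibration theorem, combined with the discreteness of integral cohomology for part (1) and Cartan's local linearisation lemma for part (2). Since $f\colon \mathcal{X}\to \Delta$ is smooth and proper, Ehresmann's theorem provides a $C^\infty$-trivialisation over any contractible open subset of $\Delta$, and by choosing trivialisations along a path we obtain the parallel transport operators $T_\gamma\colon H^2(\mathcal{X}_t,\Z)\to H^2(\mathcal{X}_s,\Z)$ used throughout.

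For part (1), I would fix a path $\gamma\colon [0,1]\to \Delta$ from $t$ to $s$, cover it by finitely many trivialising open sets, and glue the local diffeomorphisms to obtain a diffeomorphism $\psi_\gamma\colon \mathcal{X}_t\to \mathcal{X}_s$ whose pullback realises $T_\gamma$. Since $\Phi$ acts fibrewise, the composition $\psi_\gamma^{-1}\circ \Phi_s\circ \psi_\gamma$ is a diffeomorphism of $\mathcal{X}_t$ isotopic to $\Phi_t$ through the trivialisation. Because the induced action on $H^2(\mathcal{X}_t,\Z)$ is locally constant in the parameter (the target is discrete and the action depends continuously on $t$), this isotopy forces $\psi_\gamma^{-1}\circ \Phi_s\circ \psi_\gamma$ and $\Phi_t$ to induce the same map on $H^2(\mathcal{X}_t,\Z)$, which is exactly the required conjugation relation $\Phi_s^*=T_\gamma \circ \Phi_t^*\circ T_\gamma^{-1}$.

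For part (2), suppose $\Phi$ has order $n$. I would first show that $\mathrm{Fix}(\Phi)\subset \mathcal{X}$ is a smooth complex submanifold: at any fixed point $x\in \mathcal{X}_t$, the differential $d\Phi_x$ has finite order, so by Cartan's linearisation lemma there are holomorphic coordinates in which $\Phi$ is linear, and the fixed locus is cut out by the eigenspace for the eigenvalue $1$, which is smooth. Next, I would check that the restricted map $\mathrm{Fix}(\Phi)\to \Delta$ is a submersion. For this, apply the exact sequence $0\to T_x\mathcal{X}_t\to T_x\mathcal{X}\to T_t\Delta\to 0$ and take $\langle d\Phi_x\rangle$-invariants; since the group is finite, averaging over it is exact, so the sequence of invariants stays short exact and the right-hand map is the differential of $\mathrm{Fix}(\Phi)\to \Delta$, proving surjectivity. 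The restriction is also proper since $\mathrm{Fix}(\Phi)$ is closed in $\mathcal{X}$. A second application of Ehresmann's theorem then gives that $\mathrm{Fix}(\Phi)\to \Delta$ is a smooth fibre bundle with fibres $\mathrm{Fix}(\Phi_t)$, yielding the claimed deformation and the constancy of diffeomorphism type.

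The only delicate point is the submersion step in part (2): one has to be careful that the linearised $\Phi$-action on $T_t\Delta$ is trivial (which it is, because $\Phi$ acts as the identity on the base), and that finite order implies semisimplicity of $d\Phi_x$, so that the sequence of fixed subspaces is exact. Everything else is standard fibre-bundle bookkeeping and the application of Cartan's lemma. I do not expect any serious obstacle beyond stating these inputs cleanly.
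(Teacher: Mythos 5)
Your proposal is correct and follows the same route the paper indicates: part (1) from the discreteness of integral cohomology (so the fibrewise actions give a locally constant, hence parallel-transport-equivariant, endomorphism of the local system $R^2f_*\Z$), and part (2) by linearising the finite-order action at fixed points relatively over the base and invoking properness/Ehresmann. The paper only sketches this in two lines; your write-up supplies the same argument with the details (Cartan's lemma, exactness of taking invariants of the relative tangent sequence) made explicit.
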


The first assertion follows from the discreteness of the cohomology $H^i (\mathcal{X}_t,\Z)$, while the second can be shown by
linearising the action at fixed points relatively over the base.
By $1$., the conjugacy class of the action becomes an invariant for a family of automorphisms.
In this section, we refine the classification of non-symplectic involutions with $19$ parameters
for manifolds of $K3^{[2]}$-type taking into account this observation. \\

Let $\iota$ be a non-symplectic involution acting on an irreducible symplectic manifold $X$ of $K3^{[2]}$-type.
Then $X$ is automatically projective (see \cite{Beau2}) and the invariant sublattice 
\[H^2(X,\Z)^{\iota}:=\{x\in H^2(X,\Z)| \iota^* (x)=x \}\]
is hyperbolic, i.e.\ has signature $(1,n)$ for some $n$. If $\iota$ moves in a $19$-dimensional family, the 
invariant sublattice is of rank two. 
In this section we prove the following refinement to Theorem \ref{thmbeau}.
%\item There exists a unique conjugacy class of involutions in 
%$O(H^2(X, \Z ))$ for which the fixed sublattice $H^2(X,\Z )^{\iota}$ is
%positive of rank $1$. Geometrically, this corresponds to the non-symplectic involutions
%induced by the covering transformations of double EPW-sextics, see \cite{O'G}. 
%In the following, we exclude $20$-dimensional families and treat exclusively the case of $19$-dimensional families. 
%\item The Beauville's (birational) involutions on $\mathrm{Hilb}^2(S)$,  where $S$ is a quartic surface, 
%belong in fact to the above-mentioned covering transformations of double EPW-sextics. See \ref{O'G4}. 

\begin{thm}\label{4families}
There exist four conjugacy classes of the action on the second cohomology of non-symplectic involutions with $19$ parameters 
on manifolds of $K3^{[2]}$-type. These classes can be distinguished by the invariant sublattice $H^2(X,\Z)^{\iota}$ as follows:  
\begin{center}
\begin{tabular}{c|c|c}
No. & isom. class of $H^2(X,\Z)^{\iota}$ & property \\ \hline
$1$ & $U$ & \\
$2$ & $U(2)$ & \\
$3$ & $\langle 2 \rangle \oplus \langle -2 \rangle$ & $\div_{H^2(X,\Z)}(g)=2$ \\
$4$ & $\langle 2 \rangle \oplus \langle -2 \rangle$ & $\div_{H^2(X,\Z)}(g)=1$ \\
\end{tabular}
\end{center}
In No.\ $3$ and $4$, $g$ denotes the generator of $H^2(X,\Z)^{\iota}$ with $(g^2)=-2$.

Moreover, in cases No.\ $1$, $2$ and $4$ any two involutions in the same class are deformation equivalent.
In case No.\ $3$, any involution can be deformed into one of two distinguished examples, namely the natural 
involution (\cite{Boi}, see also Section \ref{preliminaries}) or its Mukai flop.
\end{thm}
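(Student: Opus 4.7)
The plan is to split the theorem into a purely lattice-theoretic classification of the conjugacy classes of $\iota^{*}$ on $L := H^{2}(X,\Z) \cong U^{3} \oplus E_{8}^{2} \oplus \langle -2 \rangle$, and a Torelli-type argument promoting cohomological coincidence to deformation equivalence.

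For the classification I would set $T := H^{2}(X,\Z)^{\iota}$ and $S := T^{\perp} \subset L$. Beauville's formula $\dim = \tfrac{1}{2}(21-t)$ (Theorem~\ref{thmbeau}) forces $t = -17$, so $T$ has rank $2$; since $X$ is projective and $T_{\R} \subset H^{1,1}(X,\R)$, the signature must be $(1,1)$. The involution $\id_{T} \oplus (-\id_{S})$ on $T \oplus S$ extends to the overlattice $L$ if and only if the gluing subgroup $L/(T \oplus S) \subset A_{T} \oplus A_{S}$ is stable under $\id \oplus (-\id)$; combined with $|A_{L}| = 2$ and Nikulin's discriminant form calculus, this becomes a finite enumeration and should produce exactly the three abstract isomorphism classes $U$, $U(2)$, $\langle 2 \rangle \oplus \langle -2 \rangle$ for $T$. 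For each fixed $T$, Eichler's criterion (Proposition~\ref{eichler}, applicable because $L \supset U \oplus U$) reduces the set of primitive embeddings of $T$ into $L$ modulo $O(L)$ to the $L$-divisibilities of a generating set of $T$; when $T \cong \langle 2 \rangle \oplus \langle -2 \rangle$ the $(-2)$-generator $g$ can have divisibility either $1$ or $2$, producing the splitting into cases No.~$3$ and No.~$4$. Existence in each case is secured by an explicit geometric model (for instance, the natural involution of Section~\ref{preliminaries} for case No.~$3$, the construction in later sections for case No.~$1$) together with a direct lattice verification for cases No.~$2$ and No.~$4$.

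For the deformation statement in cases No.~$1$, $2$, $4$ I would work in the moduli space $\mc{M}^{\rho}$ of marked triples $(X,\iota,\phi)$ with $\phi \iota^{*} \phi^{-1} = \rho$, where $\rho$ is a fixed model involution of $L$, and its period map to $\Omega_{S} := \{ \omega \in \PP(S \otimes \C) \mid (\omega,\omega) = 0,\ (\omega,\bar\omega) > 0 \}$. By Verbitsky's global Torelli theorem and its monodromy-theoretic refinement by Markman, a generic fibre of this period map consists of a single birational class of pairs, and biregular identification reduces to matching the $\iota$-invariant Kähler cones. In classes No.~$1$, $2$ and $4$ the invariant lattice $T$ contains no class that can serve as the wall of an $\iota$-invariant wall-and-chamber decomposition of the movable cone, so each period fibre yields a unique biregular pair; connectedness of $\Omega_{S}$ (modulo the stabiliser monodromy) then gives deformation equivalence of any two representatives.

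The main obstacle is case No.~$3$, where the invariant class $g$ with $(g^{2}) = -2$ and $\div_{L}(g) = 2$ is precisely the class of a ``half-exceptional'' divisor of Mukai-flop type, so two distinct chambers of the birational Kähler cone are $\iota$-invariant, related by the Mukai flop. Here I would exhibit the two chambers concretely as the natural involution $\varphi^{[2]}$ on $\Hilb^{2}(S)$ for the double cover $S \to \PP^{2}$ and its Mukai-flopped partner along the Lagrangian $\PP^{2} = S/\varphi$, checking that the biregular involution survives the flop by the $\iota$-equivariance of the Mukai contraction. The Torelli argument of the previous paragraph then shows that any pair $(X,\iota)$ in class No.~$3$ has the same period as one of these two models, and the wall-and-chamber discussion pins down which one it deforms to. The delicate point, and the reason the statement is phrased as ``at most'' in the introduction, is to keep careful track of when a birational map between marked pairs can be promoted to a deformation of automorphisms in a family over a disc; this is where Markman's description of the monodromy action on the movable cone is indispensable.
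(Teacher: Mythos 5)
Your proposal follows essentially the same route as the paper: a rank-two hyperbolic invariant lattice forced by Beauville's dimension formula, $2$-elementarity plus Eichler's criterion (Proposition \ref{eichler}) to classify the conjugacy classes by the divisibility of the $(-2)$-generator, and then a Torelli/period-map argument reducing deformation equivalence to the structure of the period fibres, with the Mukai flop of the natural involution along $P=S/\varphi\simeq\PP^2$ supplying the second model in case No.\ 3. The one genuinely different technical choice is in the first step: you propose to enumerate the glue group $L/(T\oplus S)\subset A_T\oplus A_S$ stable under $\id\oplus(-\id)$ using $|A_L|=2$, whereas the paper (Proposition \ref{involutions}) passes to the unique unimodular overlattice $\tilde\Lambda\supset\Lambda\oplus\langle2\rangle$, extends $\iota$ by $-1$ on the extra generator, and reads off $2$-elementarity of $\Lambda^\iota=\tilde\Lambda^\iota$ from the unimodular case. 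Both work; the overlattice trick is cleaner because in the non-unimodular setting $A_T$ need not coincide with the glue group, so your enumeration has to account for that discrepancy before the classification of rank-two hyperbolic $2$-elementary lattices can be invoked.

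Three points in your sketch are under-justified and should be flagged. First, existence in cases No.\ 2 and No.\ 4 ``by a direct lattice verification'' is not sufficient as stated: a lattice involution does not by itself yield a manifold with an automorphism. The paper's Lemma \ref{aruyo} uses surjectivity of the period map to produce $(X,\eta)$ with $\NS(X)\cong\Lambda^\iota_J$ and then the strong Torelli theorem to realise the Hodge isometry biregularly; you invoke Torelli later, so this is patchable, but it is where existence actually comes from in all four cases. Second, to conclude that two generic involutions from overlapping families \emph{coincide} (rather than merely agree on $H^2$) you need faithfulness of $\mathrm{Aut}(X)\rightarrow \mathrm{Aut}(H^2(X,\Z))$ for $K3^{[2]}$-type (the paper's Lemma \ref{faithful}); your ``unique biregular pair'' glosses over this. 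Third, in cases No.\ 1 and No.\ 4 the invariant lattice \emph{does} contain a $(-2)$-class, so there is a wall; the correct statement (Proposition \ref{fibres}) is that there is no $(-10)$-class of divisibility $2$, hence no flopping wall, and the reflection in the $(-2)$-wall is a monodromy operator, so the two K\"ahler-type chambers give the same manifold with different markings. Your phrasing in terms of walls of the movable-cone decomposition is defensible but needs to be made precise along these lines, since it is exactly the presence of a $(-10)$-divisibility-$2$ wall in case No.\ 3 that produces the second birational model.
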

\begin{rem}
\begin{enumerate}[(1)]
\item The family of natural involutions (see the sentences after Theorem \ref{thmbeau}) corresponds to No.\ $3$ as follows:
the involution $\varphi$ on a $K3$ surface $S$ has one-dimensional fixed sublattice isomorphic to
$\langle 2\rangle.$ The additional factor $\langle -2\rangle$ in the fixed lattice of the induced involution on the Hilbert square corresponds to half the class of the exceptional divisor of the Hilbert-Chow morphism. Alternatively we can use the description of $\Hilb^2(S)$ as a
moduli space of sheaves and apply Lemma \ref{lemogradyeq} below.
\item We emphasise that all deformation families here are families of biregular involutions, not only of birational involutions 
(see Definition \ref{deformation}).
In particular, by Proposition \ref{basic}, involutions in different conjugacy classes are never deformation equivalent. 
On the other hand, we do not know whether the family of natural involutions and its Mukai flop can be deformed to each other or not. See also Corollary \ref{cordefeq}.
\end{enumerate}
\end{rem}
\vspace{2mm}

Recall that the Beauville-Bogomolov lattice $H^2(X,\Z)$ of manifolds of $K3^{[2]}$-type is isomorphic to
\[\Lambda= U^3\oplus E_8^2\oplus \langle -2 \rangle.\]
We use $\Lambda$ as a reference lattice. Let us begin the proof of Theorem \ref{4families}.
\begin{prop}\label{involutions}
Let $\iota$ be an involution in $O(\Lambda )$ such that $\Lambda^{\iota}$ has signature $(1,1)$. Then $\iota$ is in one of four conjugacy classes, which are characterised by the table of Theorem \ref{4families}.
\end{prop}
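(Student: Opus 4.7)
The plan is to apply Nikulin's theory of primitive embeddings and discriminant forms to enumerate the $O(\Lambda)$-conjugacy classes of involutions $\iota$ on $\Lambda$ with invariant sublattice of signature $(1,1)$. Writing $T := \Lambda^{\iota}$ and $S := \Lambda^{-\iota}$, these are primitive sublattices of signatures $(1,1)$ and $(2,19)$, respectively, and $T \oplus S \subset \Lambda$ has finite index. Because $\iota$ acts as $+\mathrm{id}$ on $T$ and $-\mathrm{id}$ on $S$, the glue subgroup $H \subset A_T$ (matched to its image in $A_S$ via an anti-isometry) must be $2$-elementary and isotropic for the discriminant form $q_T$. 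Thus the conjugacy class of $\iota$ in $O(\Lambda)$ is determined by the $O(\Lambda)$-orbit of the primitive embedding $T \hookrightarrow \Lambda$, and the proof reduces to classifying these orbits.

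The first step is to enumerate the isometry classes of $T$. From $|A_\Lambda|=2$ and the identity $|A_T|\cdot|A_S| = 2|H|^2$ the discriminant $|A_T|$ must be a power of $2$; rank two bounds the length of $A_T$ by two, and the requirement that the isotropic subgroup $H \subset A_T[2]$ have cokernel injecting into $A_\Lambda \cong \Z/2$ further restricts the structure. A short case analysis over the even rank-$2$ hyperbolic lattices whose discriminant is a small power of $2$, combined with Nikulin's criteria for the existence of primitive embeddings with prescribed discriminant complement \cite{Nik}, leaves exactly the three candidates $T \cong U$, $T \cong U(2)$, and $T \cong \langle 2 \rangle \oplus \langle -2 \rangle$, each of which is actually realised.

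The second step is to count $O(\Lambda)$-orbits of primitive embeddings $T \hookrightarrow \Lambda$ for each such $T$. Because $\Lambda$ contains $U \oplus U$ as an orthogonal summand, Eichler's criterion (Proposition \ref{eichler}) classifies $O(\Lambda)$-orbits of primitive vectors by the pair (square, class in $A_\Lambda$); applied successively to a basis of $T$ one deduces that the embedding of $T$ is pinned down by the divisibilities and residue classes of its basis vectors in $\Lambda$. For $T \cong U$ and $T \cong U(2)$ this yields a unique orbit of embeddings. For $T \cong \langle 2 \rangle \oplus \langle -2 \rangle = \Z f \oplus \Z g$ with $f^2=2$, $g^2=-2$, the divisor $\div_\Lambda(g)$ divides both $|g^2|=2$ and $|A_\Lambda|=2$, hence equals $1$ or $2$, and Eichler's criterion gives exactly one orbit for each of these two values, producing the two cases No.\ $3$ and No.\ $4$ of the table. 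The main obstacle is the first step: ruling out borderline candidates such as $\langle 2 \rangle \oplus \langle -4 \rangle$ (whose discriminant group has order $8$ and length $2$) requires computing $q_T$ explicitly to verify the absence of a suitable isotropic $2$-elementary subgroup $H \subset A_T$ compatible with a complement $S$ of the prescribed discriminant form.
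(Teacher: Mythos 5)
Your overall strategy --- classify $O(\Lambda)$-orbits of the primitive sublattice $T=\Lambda^{\iota}$ via Nikulin's glue formalism and then note that $\iota=\id_T\oplus(-\id_{T^\perp})$ is determined by $T$ --- is legitimate and differs from the paper's route, but as written it contains a genuine error and two unproved steps. The error is in the gluing condition: the glue group $H=\Lambda/(T\oplus S)$ is the graph of an anti-isometry between subgroups $H_T\subset A_T$ and $H_S\subset A_S$, i.e.\ it is isotropic for $q_T\oplus q_S$, \emph{not} for $q_T$ alone. Requiring $H_T$ to be isotropic for $q_T$ already fails in case No.~3: there $g$ spans the $\langle -2\rangle$ summand of $\Lambda$, the glue element is $h/2$ with $q_T(h/2)=\tfrac12\neq 0$, so your criterion, applied literally, would discard the natural involution. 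Moreover the ``short case analysis'' that is supposed to eliminate candidates such as $\langle 2\rangle\oplus\langle -4\rangle$ is only asserted, and it is not short: one must chase the condition $A_\Lambda\cong H^{\perp}/H$ through the order-four part of $A_T$. The paper avoids all of this with one device you are missing: extend $\iota$ by $-1$ on a $\langle 2\rangle$ summand to the unimodular overlattice $\tilde\Lambda\supset\Lambda\oplus\langle 2\rangle$ (the Mukai lattice). Then $\Lambda^{\iota}=\tilde\Lambda^{\iota}$ is the invariant lattice of an involution of a \emph{unimodular} lattice, hence $2$-elementary, and the classification of rank-two hyperbolic $2$-elementary lattices gives exactly $U$, $U(2)$ and $\langle 2\rangle\oplus\langle -2\rangle$.

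The second gap is that ``Eichler applied successively to a basis of $T$'' does not by itself pin down the embedding. For $T\cong U(2)$ the basis vectors are isotropic and not orthogonal, so there is no obvious second application of Proposition \ref{eichler}; the paper instead identifies the orthogonal complement $N=U\oplus U(2)\oplus E_8^2\oplus\langle -2\rangle$, unique by Nikulin, and uses the surjectivity of $O(N)\to O(q_N)$ to get a single orbit. In case No.~4, after normalising $g$ one must apply Eichler to $h$ \emph{inside} $(\Z g)^{\perp}$, and for that one needs $\div_{(\Z g)^{\perp}}(h)$, which is not determined by $\div_\Lambda(h)$; the paper devotes a separate Claim (a parity argument with auxiliary vectors $l,m$) to proving $\div_{(\Z g)^{\perp}}(h)=1$. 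Without these two inputs the uniqueness of the conjugacy classes No.~2 and No.~4 is not established.
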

\begin{proof}
First we prove that $\Lambda^{\iota}$ is $2$-elementary. Let $\tilde{\Lambda}$ be the unique 
unimodular overlattice
of $\Lambda\oplus \Z \lambda$, where $(\lambda^2)=2$. 
We can extend the action of $\iota$ to $\tilde{\Lambda}$ in such a way that $\iota$ acts on $\lambda$ by $-1$. We have $\Lambda^{\iota}=\tilde{\Lambda}^{\iota}$. 
Thus $\Lambda^{\iota}$ is the invariant sublattice of an involution acting on a unimodular lattice,
hence it is $2$-elementary. By the classification of $2$-elementary hyperbolic lattices, 
the isomorphism class of 
$\Lambda^{\iota}$ is one of $U, U(2)$ or $\langle 2 \rangle \oplus \langle -2 \rangle$.

No.\ $1$: If $\Lambda^{\iota}\simeq U$ is unimodular, it is an orthogonal summand in $\Lambda$ and 
the orthogonal complement is unique by \cite{Nik}, hence 
the embedding $\Lambda^{\iota}\subset \Lambda$ is unique. This shows that such $\iota$ constitute a single conjugacy class in $O(\Lambda)$.

No.\ $2$: Assume $\Lambda^{\iota}\simeq U(2)$. By looking at the discriminant forms, we see that 
$\Lambda^{\iota} \oplus \Z \lambda$ has no overlattices inside $\tilde{\Lambda}$. 
Therefore the orthogonal complement of $\Lambda^{\iota}$ inside $\Lambda$ is isomorphic to 
$N=U\oplus U(2)\oplus E_8^2\oplus \langle -2 \rangle$ by \cite{Nik}. Using the 
surjectivity of $O(N)\rightarrow O(q_N)$, it is easy to deduce that such $\iota$ constitute a single conjugacy class.

No.\ $3$ and $4$: If $\Lambda^{\iota}\simeq \langle 2 \rangle \oplus \langle -2 \rangle$ with generators
$h$ and $g$, we have two distinct orbits of $g$ under $O(\Lambda)$ characterised by $\div_{\Lambda} (g)=2$ or $1$. 
In the former case, we have that $g$ is on a unique orbit under $O(\Lambda )$ by Eichler's criterion (Proposition \ref{eichler}),
and $(\Z g)^{\perp}$ is isomorphic to the $K3$-lattice. Next again $h$ has a unique orbit by Eichler's criterion, hence 
we can see that No.\ 3 represents a single conjugacy class. 

Finally assume that $\Lambda^{\iota}$ satisfies the conditions of No.\ $4.$ 
By Eichler's criterion, $g$ is on a unique orbit and we get $(\Z g)^{\perp}\simeq 
\langle 2 \rangle \oplus U^2 \oplus E_8^2\oplus \langle -2 \rangle$. Here we need a computation.

{\bf{Claim.}} The divisor of $h$ in $(\Z g)^{\perp}$ is $1$. 

Assume to the contrary that $\div_{(\Z g)^{\perp}} (h)=2$.
Then for every $l\in \Lambda$ with $(h,l)$ odd, we have that the element $l':=\frac{1}{2}(2l+(g,l)g)$
is not an integral element in $(\Z g)^{\perp}$ since the pairing $(h,l')$ is odd. Thus necessarily
$(g,l)$ is odd. 

On the other hand, since $\div_{\Lambda}(g+h)=1$, we can find $m\in \Lambda$ such that $(g+h,m)=1$. 
By the above discussion, necessarily $(h,m)$ is even and $(g,m)$ is odd. 
Using the element $l$ as above, which exists since $\div_{\Lambda} (h)=1$, we find 
$(h,m+l)$ is odd and $(g,m+l)$ is even, a contradiction to the discussion above. 
Thus the claim is proved. 

The claim allows us to use Eichler's criterion again to see that $h$ has a unique orbit in $(\Z g)^{\perp}$,
and we get the uniqueness of the conjugacy class for No.\ 4.
\end{proof}
 
 We denote by $\Lambda^{\iota}_J$ the fixed sublattice in $\Lambda$ of an involution $\iota$ (taken as a representative
of the conjugacy class) of No.\ $J$ ($J=1,\dots,4)$. As a corollary to the Torelli theorems 
due to Verbitsky, Huybrechts and Markman, we have the existence of all cases as follows.
\begin{lem}\label{aruyo}
For each $J=1,\dots,4$ there exists (at least abstractly) an irreducible symplectic manifold of $K3^{[2]}$-type $X_J$ 
with an involutive automorphism $\iota_J$ whose conjugacy class in $\mathrm{Aut}(H^2(X_J,\Z))$ is 
in No. $J$. 
\end{lem}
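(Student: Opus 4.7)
The plan is to realise each of the four abstract lattice-theoretic involutions from Proposition \ref{involutions} as a biregular automorphism via the surjectivity and Torelli theorems for marked manifolds of $\ta{K3}$-type. Fix $J\in\{1,\dots,4\}$, let $\iota\in O(\Lambda)$ be a representative of the corresponding conjugacy class (with invariant lattice $\Lambda^{\iota}_J$ of signature $(1,1)$), and set $T:=(\Lambda^{\iota}_J)^{\perp}\subset\Lambda$, a lattice of signature $(2,19)$ on which $\iota$ acts as $-\id$.

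First I would choose a very general period $[\omega]\in\PP(T\otimes\C)$ subject to $(\omega,\omega)=0$, $(\omega,\bar{\omega})>0$, and $\omega^{\perp}\cap\Lambda=\Lambda^{\iota}_J$; such points are dense in the corresponding period domain. By Huybrechts' surjectivity of the period map there exists a marked pair $(X_J,\eta)$ of $\ta{K3}$-type with $\eta(H^{2,0}(X_J))=\C\omega$. Genericity forces $\NS(X_J)=\eta^{-1}(\Lambda^{\iota}_J)$, which has signature $(1,1)$, so Huybrechts' projectivity criterion makes $X_J$ projective.

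Next I would verify the three hypotheses of Markman's Hodge-theoretic Torelli theorem for the conjugated isometry $\iota^{*}:=\eta^{-1}\circ\iota\circ\eta\in O(H^{2}(X_J,\Z))$. It is a Hodge isometry, because $\iota(\omega)=-\omega$ preserves the line $H^{2,0}(X_J)$. It preserves the K\"ahler cone as a set, tautologically, because it fixes the entire Picard lattice $\NS(X_J)$ pointwise. It is a monodromy operator: the discriminant group $A_\Lambda\cong\Z/2\Z$ admits only trivial automorphisms, so $\mathrm{Mon}^{2}(\Lambda)$ coincides with the orientation-preserving subgroup $O^{+}(\Lambda)$, and $\iota$ lies there since on the two-dimensional positive part of $T\otimes\R$ it acts as $-\id$ with determinant $+1$, while on the single positive direction inside $\Lambda^{\iota}_J\otimes\R$ it acts trivially. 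Markman's theorem then provides a biregular $\iota_J\in\mathrm{Aut}(X_J)$ with $\iota_J^{*}=\iota^{*}$ on $H^{2}$. The injectivity of $\mathrm{Aut}(X_J)\hookrightarrow O(H^{2}(X_J,\Z))$ for $\ta{K3}$-type manifolds (Beauville), combined with $(\iota^{*})^{2}=\id$, forces $\iota_J$ to be an involution, and its cohomological conjugacy class is $J$ by construction.

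The step that could in principle have caused trouble is the K\"ahler-cone hypothesis in Markman's theorem: a Hodge isometry in $\mathrm{Mon}^{2}$ generally only lifts to a birational automorphism and may send the K\"ahler cone into a neighbouring chamber of the movable cone. In our setting this obstruction evaporates, because for a very general period the Picard lattice is precisely the $\iota$-fixed lattice, so every ample class is automatically $\iota^{*}$-invariant. The remaining inputs (Huybrechts' surjectivity and projectivity criterion, Markman's description of $\mathrm{Mon}^{2}$, and Beauville's faithfulness of $\mathrm{Aut}$ on $H^{2}$) are standard.
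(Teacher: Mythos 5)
Your proposal is correct and follows essentially the same route as the paper: surjectivity of the period map at a very general period with $\NS(X_J)\cong\Lambda^{\iota}_J$, projectivity via Huybrechts' criterion, the observation that $\iota$ is an orientation-preserving Hodge isometry and a monodromy (parallel transport) operator preserving a K\"ahler class, and the strong Torelli theorem to realise it as a biregular involution. The extra details you supply (the computation that $\iota$ is orientation-preserving, the identification of $\mathrm{Mon}^2$ for $\ta{K3}$-type, and the use of faithfulness to get $\iota_J^2=\id$) are all accurate and merely make explicit what the paper leaves to the cited references.
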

\begin{proof}
By the surjectivity of period map, there exists a marked irreducible symplectic manifold
$(X_J, \eta)$ of $K3^{[2]}$-type such that 
$\eta$ induces an isomorphism $\NS(X_J)\stackrel{\sim}{\rightarrow} \Lambda^{\iota}_J\subset \Lambda$
(see also Lemma \ref{density}).
Via $\eta$, the involution $\iota$ acts on $H^2(X_J,\Z)$ and it is 
an orientation-preserving Hodge isometry 
of $H^2(X_J,\Z).$ Since $X_J$ is of $K3^{[2]}$-type, $\iota$ is a parallel transport operator (see \cite[Sect.\ 9]{markman}). 
The manifold $X_J$ is automatically projective by Huybrechts' criterion and $\iota$ acts trivially on $\NS(X_J).$ Hence it 
preserves a k\"{a}hler class and thus the action is realised by an automorphism of $X_J$ by the strong Torelli theorem.
\end{proof}

Let us proceed to the study deformations. We denote by $\mathfrak{M}$ the moduli space of 
marked manifolds of $K3^{[2]}$-type and by $\mathfrak{M}^0$ one of its connected components. Hereafter we fix 
a number $J\in\{1,2,3,4\}$.
Proposition \ref{involutions} shows that for every involution $\iota$ on some $X$ satisfying the condition of No.\ $J$, 
there exists an equivariant marking $\eta\colon H^2(X,\Z)\rightarrow \Lambda$ (where we denote both involutions on $H^2$ and 
$\Lambda$ by $\iota$)
so that the period $\eta (H^{2,0}(X))$
is in the following subset of the period domain $\Omega_{\Lambda}$,
\[\Omega_{(\Lambda^{\iota}_J)^{\perp}}=\{\C \omega\in \mathbb{P}((\Lambda^{\iota}_J)^{\perp} \otimes \C)\mid 
(\omega^2)=0, (\omega,\overline{\omega})>0 \}.\]
This set is a disjoint union of two symmetric domains of type IV. Let us fix one of them and denote it by $\Omega_J$. 
We can modify the marking $\eta$ by the next lemma, so that we have $\eta (H^{2,0}(X))\in \Omega_J$.
\begin{lem}\label{lemcomponents}
There is an isometry $\beta\in O(\Lambda)$ which commutes with $\iota$ and exchanges the two components 
of $\Omega_{(\Lambda^{\iota}_J)^{\perp}}$.
\end{lem}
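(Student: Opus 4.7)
The plan is to realise $\beta$ as a reflection $s_u$ at a suitable vector $u \in (\Lambda^{\iota}_J)^{\perp}$ with $(u^2) = 2$; then $s_u(x) := x - (x,u)u$ is automatically an integral isometry of $\Lambda$, and it restricts to the identity on $\Lambda^{\iota}_J$ since $u \perp \Lambda^{\iota}_J$.

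The first step is to check that in each of the four cases $(\Lambda^{\iota}_J)^{\perp}$ contains a vector of square $2$. This is a quick inspection using the descriptions in the proof of Proposition \ref{involutions}: in case $1$ one has $(\Lambda^{\iota}_J)^{\perp} \cong U^2 \oplus E_8^2 \oplus \langle -2 \rangle$; in case $2$ one has $U \oplus U(2) \oplus E_8^2 \oplus \langle -2\rangle$; in case $3$ the complement is the orthogonal complement of a primitive square-$2$ class in the $K3$ lattice $(\Z g)^{\perp} \cong U^3 \oplus E_8^2$, i.e.\ $\langle -2 \rangle \oplus U^2 \oplus E_8^2$; and in case $4$ one similarly exhibits a copy of $U$ coming from one of the unused hyperbolic planes of $\Lambda$. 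In every case a hyperbolic summand $U$ occurs, and any $U$ contains vectors of square $2$. The commutation $\iota \circ s_u = s_u \circ \iota$ is then immediate: since $(\Lambda^{\iota}_J)^{\perp}$ is the $(-1)$-eigenlattice of $\iota$ on $\Lambda$, we have $\iota(u) = -u$, and the identity $(\iota x, u) = (x, \iota u) = -(x,u)$ yields the commutation directly.

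The key geometric step is to show that $\beta = s_u$ exchanges the two components of $\Omega_{(\Lambda^{\iota}_J)^{\perp}}$. For this it is enough to exhibit a single period that is visibly sent to the other component. Choose
\[
[\omega] = [u + iy] \in \Omega_{(\Lambda^{\iota}_J)^{\perp}},
\]
with $y \in (\Lambda^{\iota}_J)^{\perp}_{\R}$ orthogonal to $u$ and satisfying $(y^2) = 2$; such a $y$ exists because $u^{\perp}$ inside $(\Lambda^{\iota}_J)^{\perp}_{\R}$ still has signature $(1, 19)$ and therefore contains positive vectors. Then
\[
s_u(\omega) = s_u(u) + i\,s_u(y) = -u + iy = -\overline{\omega},
\]
so $[s_u(\omega)] = [\overline{\omega}]$. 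Since complex conjugation is the standard antiholomorphic involution exchanging the two connected components of a type IV domain, so does $\beta$.

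I do not foresee any serious obstacle. The one piece of case-analysis, namely producing a square-$2$ vector in $(\Lambda^{\iota}_J)^{\perp}$ for each $J$, is routine given the explicit lattice descriptions already at hand in Proposition \ref{involutions}, and the rest of the argument is a single reflection computation.
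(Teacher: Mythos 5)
Your proof is correct, and it follows the same basic strategy as the paper --- produce an explicit isometry supported on $(\Lambda^{\iota}_J)^{\perp}$ that reverses the orientation of positive $2$-planes, using the observation that $(\Lambda^{\iota}_J)^{\perp}$ contains a hyperbolic summand $U$ in each of the four cases --- but the implementation differs in a way worth noting. The paper writes $(\Lambda^{\iota}_J)^{\perp}\cong U\oplus N'$ and takes $\alpha=(-1,1)$, i.e.\ $-\mathrm{id}$ on the $U$ summand and the identity on $N'$; this is a product of two reflections (one in a $(+2)$-vector, one in a $(-2)$-vector of $U$) and still has to be extended from $\Lambda^{\iota}_J\oplus(\Lambda^{\iota}_J)^{\perp}$ to all of $\Lambda$, which works because $\alpha$ acts trivially on the discriminant group. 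Your single reflection $s_u$ in a $(+2)$-vector $u\in(\Lambda^{\iota}_J)^{\perp}$ sidesteps that gluing step entirely, since $s_u(x)=x-(x,u)u$ is integral on all of $\Lambda$ for any even lattice, and it fixes $\Lambda^{\iota}_J$ pointwise for free; your explicit computation $s_u(u+iy)=-u+iy=-\overline{\omega}$, combined with the fact that $s_u$ must either preserve or swap the two connected components, is a clean and self-contained verification of the component exchange, where the paper instead appeals to the orientation of positive $2$-planes. The only case-by-case input you need --- a vector of square $2$ in $(\Lambda^{\iota}_J)^{\perp}$ --- is correctly supplied by the $U$ summand visible in each of the four descriptions from Proposition \ref{involutions}, so there is no gap.
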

\begin{proof}
Note that the two components correspond to two orientations of positive $2$-planes in $(\Lambda^{\iota}_J)^{\perp}\otimes \R$.
In each No., from the description in Proposition \ref{involutions},
we can see that the lattice $(\Lambda^{\iota}_J)^{\perp}$ is of the form
$U\oplus N'$ where $N'$ is some hyperbolic lattice. 
We can define an isometry $\alpha = (-1,1)\in O(U\oplus N')$ and it is easy to see that 
we can extend it to an isometry $\beta$ of $\Lambda$. 
\end{proof}
Furthermore, by replacing $\eta$ by $-\eta$ if necessary, we can assume that $(X,\eta)$ is in the connected component 
$\mathfrak{M}^0$. Thus, as a single marked manifold, $(X,\eta)$ can be located in the connected family $P_0^{-1} (\Omega_J)$,
where $P_0 \colon \mathfrak{M}^0\rightarrow \Omega_{\Lambda}$ is the period map.
In order to equip the family with a deformation of involutions,
we need the following lemmata.
\begin{lem}\label{smalldeformation}
For any involution $\iota$ on $X$ with the conjugacy class No.\ $J$, 
there exists a small neighborhood $N$ of $\eta (H^{2,0}(X_J))$ in $\Omega_J$ such that there exists a
smooth family of involutions over $N$ deforming $\iota$. Here $\eta$ is the marking 
constructed as above.
\end{lem}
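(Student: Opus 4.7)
The plan is to combine the equivariant Kuranishi theorem with the local Torelli theorem for manifolds of $K3^{[2]}$-type. The construction of the desired family over $N$ proceeds by first building an equivariant family over the Kuranishi base of $X$ and then transporting it to the period domain.

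I would start from the Kuranishi family $\pi\colon \mathcal{X}\to \mathrm{Def}(X)$, which is smooth of dimension $h^1(T_X)=21$ since deformations of the irreducible symplectic manifold $X$ are unobstructed. By the equivariant version of Kuranishi's theorem (applicable because $\iota$ has finite order and $X$ is compact K\"{a}hler, so that one can average to obtain an $\iota$-invariant K\"{a}hler metric and run Kuranishi's construction $\langle\iota\rangle$-equivariantly), the automorphism $\iota$ induces a holomorphic involution $\tilde\iota$ on $\mathrm{Def}(X)$ whose fixed locus $\mathrm{Def}(X)^{\tilde\iota}$ parametrises exactly those deformations of $X$ to which $\iota$ extends as a biregular automorphism. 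Over this fixed locus the pulled-back family $\pi^{-1}(\mathrm{Def}(X)^{\tilde\iota})$ carries a fibrewise involution $\Phi$ restricting to $\iota$ at the base point. Since $\mathrm{Def}(X)$ is smooth and $\tilde\iota$ is holomorphic, the fixed locus $\mathrm{Def}(X)^{\tilde\iota}$ is smooth with tangent space $H^1(T_X)^{\iota}$. Contracting with the symplectic form $\omega$ (on which $\iota$ acts by $-1$) identifies $H^1(T_X)^{\iota}$ with the $(-1)$-eigenspace of $\iota$ on $H^{1,1}(X)$; since the $(+1)$-eigenspace of $\iota$ on $H^2(X,\Z)$ is precisely the rank-$2$ sublattice $\Lambda^\iota_J$, this yields $\dim \mathrm{Def}(X)^{\tilde\iota}=21-2=19$.

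Next I would invoke local Torelli: the period map $\mathcal{P}\colon \mathrm{Def}(X)\to \Omega_\Lambda$ associated with the marking $\eta$ is a local biholomorphism at $0$ and is equivariant with respect to $\tilde\iota$ and the action of $\iota\in O(\Lambda)$ on $\Omega_\Lambda$. The fixed locus of this latter action is exactly $\Omega_{(\Lambda^\iota_J)^\perp}=\Omega_J\sqcup \Omega_J'$, so $\mathcal{P}$ sends a neighborhood of $0$ in $\mathrm{Def}(X)^{\tilde\iota}$ biholomorphically onto a neighborhood of $\eta(H^{2,0}(X))$ inside this fixed locus. By our choice of $\eta$, the point $\eta(H^{2,0}(X))$ lies in the component $\Omega_J$, and the whole neighborhood lies in $\Omega_J$; call it $N$. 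Pulling back $(\pi,\Phi)$ through $\mathcal{P}^{-1}\colon N\to \mathrm{Def}(X)^{\tilde\iota}$ gives the required smooth family of involutions over $N$ in the sense of Definition \ref{deformation}.

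The main obstacle is the equivariant Kuranishi step, namely establishing that the $\iota$-action on $X$ genuinely lifts to an action on $\mathrm{Def}(X)$ whose fixed locus parametrises $\iota$-equivariant deformations and over which $\iota$ extends biregularly rather than merely cohomologically. This is a classical result in the deformation theory of compact K\"{a}hler manifolds with finite-order automorphisms; alternatively, one can bypass it by working directly in the marked moduli space $\mathfrak{M}^0$, taking a local biholomorphic neighborhood of $(X,\eta)$ on which $P_0$ is an isomorphism, intersecting the preimage of $\Omega_J$, and using the strong Torelli theorem together with the fact that $\iota$ is a parallel transport operator acting trivially on the ample class of $\Lambda^\iota_J$ (as in the proof of Lemma \ref{aruyo}) to produce the biregular automorphism fibre-by-fibre, with uniqueness then ensuring that these patch into the required smooth family $\Phi$.
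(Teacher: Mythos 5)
Your proposal is correct and takes essentially the same route as the paper: lift $\iota$ to the Kuranishi family (the paper does this via universality of the Kuranishi family, following Mongardi, and cites Beauville's Theorem \ref{thmbeau}(2) for the smoothness and dimension $19$ of $\mathrm{Def}(X)^{\iota}$, which you instead compute directly from the eigenspace decomposition), then use local Torelli and the dimension count to identify the image of $\mathrm{Def}(X)^{\iota}$ with a neighbourhood in $\Omega_J$.
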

\begin{proof}
The idea is from \cite[Section 4]{Mon}.
Let $f \colon \mathcal{X}\rightarrow \mathrm{Def}(X)$ be the Kuranishi family of $X$, where 
we can choose $\mathrm{Def}(X)$
to be an open ball. Note that $\mathrm{Def}(X)$ can be identified with an open neighborhood 
of $(X,\eta )$ in the moduli space $\mathfrak{M}$. 
By the universality, shrinking $\mathrm{Def}(X)$ if necessary,
the action of $\iota$ on $X$ extends to $\mathcal{X}$ and $\mathrm{Def}(X)$
so that $f$ is equivariant and the induced action on $\mathcal{X}_0\simeq X$ is $\iota$.
Then the restriction of $f$ to the fixed locus $\mathrm{Def}(X)^{\iota}$ is a deformation of involutions.
As in Theorem \ref{thmbeau}, $\mathrm{Def}(X)^{\iota}$ is smooth of dimension $19$. By the local Torelli theorem 
the period map sends $\mathrm{Def}(X)^{\iota}$ isomorphically to a submanifold of the same dimension in $\Omega_{\Lambda}$
locally around $\eta (H^{2,0}(X))$. Since this image must be contained in $\Omega_J$ which has the same dimension $19$,
they coincide. 
\end{proof}
\begin{lem}\label{density}
There exists a dense and connected subset $\Omega_J^0$ of $\Omega_J$ such that 
if the period of a marked irreducible symplectic manifold $(X, \eta)$
is in $\Omega_J^0,$ then $\eta$ induces an isomorphism $\NS(X)\stackrel{\sim}{\rightarrow} \Lambda^{\iota}_J$.
\end{lem}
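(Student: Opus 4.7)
The plan is to exhibit $\Omega_J^0$ as the complement in $\Omega_J$ of a countable union of analytic hyperplane sections, one for each nonzero anti-invariant integral class, and then verify the three required properties: that the period of any $(X,\eta)$ lying in $\Omega_J^0$ has $\eta(\NS(X))=\Lambda^{\iota}_J$, that $\Omega_J^0$ is dense, and that it is connected. First I recall the standard translation: for any marked manifold $(X,\eta)$ with period $[\omega]$, the Hodge-theoretic identity $\NS(X)=H^{1,1}(X)\cap H^2(X,\Z)$ combined with $H^{1,1}(X)\cap H^2(X,\C)=\omega^{\perp}$ gives, via $\eta$, the description $\eta(\NS(X))=\omega^{\perp}\cap\Lambda$. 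Because $[\omega]\in\Omega_J\subset\Omega_{(\Lambda^{\iota}_J)^{\perp}}$, the inclusion $\Lambda^{\iota}_J\subseteq \omega^{\perp}\cap\Lambda$ is automatic, and so the issue is to force the reverse inclusion by discarding those periods which accidentally see extra integral classes.

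Set $\Lambda^{-}:=(\Lambda^{\iota}_J)^{\perp}\cap\Lambda$, which coincides with the anti-invariant sublattice $\{v\in\Lambda:\iota v=-v\}$. For each nonzero $w\in\Lambda^{-}$ define the analytic hypersurface $H_w:=\{[\omega]\in\Omega_J:(w,\omega)=0\}$ and put
\[ \Omega_J^0\;:=\;\Omega_J\setminus\bigcup_{0\neq w\in\Lambda^{-}} H_w. \]
To verify the key property, take $[\omega]\in\Omega_J^0$ and $v\in\omega^{\perp}\cap\Lambda$. Decompose $v=v^{+}+v^{-}$ in $\Lambda\otimes\Q$ with $v^{\pm}:=\tfrac12(v\pm\iota v)$. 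Since $v^{+}\in\Lambda^{\iota}_J\otimes\Q$ and $\omega\in(\Lambda^{\iota}_J)^{\perp}\otimes\C$, we have $(v^{+},\omega)=0$, so $(v^{-},\omega)=(v,\omega)=0$. But $2v^{-}=v-\iota v\in\Lambda^{-}$, and the definition of $\Omega_J^0$ then forces $2v^{-}=0$, so $v=v^{+}\in\Lambda^{\iota}_J$. This gives $\omega^{\perp}\cap\Lambda=\Lambda^{\iota}_J$, which is the desired isomorphism.

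It remains to check that $\Omega_J^0$ is dense and connected. Since $\Lambda^{-}$ is countable and each $H_w$ is a proper closed analytic subset of the connected complex manifold $\Omega_J$ of complex dimension $19$, density follows immediately from Baire's theorem applied to the open dense sets $\Omega_J\setminus H_w$. For connectedness I invoke the standard fact that in a connected complex manifold of complex dimension at least two, the complement of a countable union of proper analytic hypersurfaces is path-connected: given two points, join them by a real-analytic arc in $\Omega_J$ and note that in any $\geq 2$-parameter family of such arcs the condition of meeting a given $H_w$ is real codimension two, so a general-position argument produces an arc simultaneously avoiding all $H_w$. The only mildly delicate point in the whole argument is this codimension count for connectedness; the identification $\omega^{\perp}\cap\Lambda=\Lambda^{\iota}_J$ on $\Omega_J^0$ and the density of $\Omega_J^0$ are essentially formal.
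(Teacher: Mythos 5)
Your proof is correct and follows essentially the same route as the paper: the paper likewise defines $\Omega_J^0$ as the complement of the hyperplanes $H_l$ for $0\neq l\in(\Lambda^{\iota}_J)^{\perp}$, gets density from Baire's theorem, and gets connectedness from the fact that one removes a countable union of proper closed complex subspaces. Your explicit verification that $\omega^{\perp}\cap\Lambda=\Lambda^{\iota}_J$ via the decomposition $v=v^{+}+v^{-}$ is a correct elaboration of a step the paper leaves implicit.
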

\begin{proof}
For each $l\in (\Lambda^{\iota}_J)^{\perp}-\{0\}$ we define the hyperplane $H_l:=\{ \C\omega\in \Omega_J\mid(l,\omega)=0\}$ in $\Omega_J$ and 
set $\mathcal{H}=\cup H_l$. The subset $\Omega_J^0= \Omega_J - \mathcal{H}$ is dense by Baire's category theorem.
It is connected since $\mathcal{H}$ is a countable union of closed complex subspaces.
\end{proof}
By the proof of Lemma \ref{aruyo}, all $(X, \eta)$ with $\eta (H^{2,0}(X))\in \Omega_J^0$
carry an involution $\iota$ with the conjugacy class No. $J$. We call them {\em{generic}} involutions.

By Lemmata \ref{smalldeformation} and \ref{density} we can deform any given involution to a generic one. 
Again by the same lemmata, for any given two points in $\Omega_J^0,$ 
we can find a finite number of families of involutions such that their periods connect the given points. 
To see that two generic involutions from different families at an overlapping period
coincide, we have to study the fibre of the period map.
\begin{prop}\label{fibres}
Let $p\in \Omega_J^0$. Let $(X,\eta)$ be a marked manifold whose period is $p$. 
Recall that $P_0$ is the period map $\mathfrak{M}_0\rightarrow \Omega_{\Lambda}$ restricted to a 
connected component $\mathfrak{M}_0$.
Then the fibre $P_0^{-1}(p)$ is given as follows:

\begin{center}
\begin{tabular}{c|c|c}
No. & members of the fibre $P_0^{-1}(p)$ & $|P_0^{-1}(p)|$ \\ \hline
$1$ & $(X,\eta),$ $(X,s\eta)$ & $2$\\
$2$ & $(X,\eta)$ & $1$\\
$3$ & $(X,\eta),$ $(X,s\eta),$ $(X',\eta'),$ $(X',s\eta')$ & $4$\\
$4$ & $(X,\eta),$ $(X,s\eta)$ & $2$\\
\end{tabular}
\end{center} 

In No.\ $1,$ $3,$ $4,$ $s$ denotes the (unique) reflection in $(-2)$-vector in $\Lambda^{\iota}_J$. 
In particular, in No.\ $1,$ $2,$ $4$ the underlying manifolds in the fibre are the same
and only the marking is different.
\end{prop}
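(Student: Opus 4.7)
The key tool is Verbitsky's global Torelli theorem together with Markman's identification of the monodromy group for $K3^{[2]}$-type: $\mathrm{Mon}^2(X) = O^+(H^2(X, \Z))$, the index-two orientation-preserving subgroup. Fix a reference $(X, \eta) \in P_0^{-1}(p)$ and use $\eta$ to identify $H^2(X, \Z) = \Lambda$. Any other $(X', \eta') \in P_0^{-1}(p)$ corresponds to a parallel transport Hodge isometry $g := \eta' \circ \eta^{-1}: \Lambda \to H^2(X', \Z)$, and two such pairs represent the same point of $\mathfrak{M}^0$ iff they differ by the cohomological action of an isomorphism of underlying manifolds. Combined with Huybrechts' result that bimeromorphic irreducible symplectic manifolds share a parallel transport Hodge isometry, this reduces the fibre computation to: (i) the stabiliser of $p$ inside $O^+(\Lambda)$, (ii) the quotient by the image of $\mathrm{Aut}(X)$, and (iii) the number of non-isomorphic birational models of $X$.

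For $p \in \Omega_J^0$ very general we have $\NS(X) = \Lambda^\iota_J$ and $T(X) = (\Lambda^\iota_J)^\perp$. An isometry of $\Lambda$ stabilises the Hodge structure at such $p$ iff it acts as $\pm\mathrm{id}$ on $T(X)$ and as some element of $O(\Lambda^\iota_J)$, subject to compatibility with the gluing $\Lambda^\iota_J \oplus T(X) \hookrightarrow \Lambda$ at the level of discriminant forms and to orientation preservation on the positive three-plane. Since the generic automorphism group is $\mathrm{Aut}(X) = \langle \iota \rangle$, which acts on $\Lambda$ as $(\mathrm{id}_{\Lambda^\iota_J}, -\mathrm{id}_{T(X)})$, the quotient of the stabiliser by $\langle \iota \rangle$ gives the distinct marking classes with underlying manifold $X$.

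The case-by-case computation is then short. In case $1$ ($\Lambda^\iota = U$), the unique $(-2)$-vector up to sign is $e - f$ (with $e, f$ the isotropic basis), whose reflection $s_{e-f}$ is the nontrivial swap in $O(U)$; extended by the identity on the orthogonal complement of $U$ in $\Lambda$ this gives an orientation-preserving Hodge isometry distinct from $\iota$, and together with $\pm\mathrm{id}$ on $T(X)$ yields a stabiliser of order four, whose quotient by $\langle \iota \rangle$ has two elements. In case $2$ ($\Lambda^\iota = U(2)$) there is no $(-2)$-vector and the restrictive gluing forces the stabiliser to be exactly $\langle \iota \rangle$, giving a unique fibre element. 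In cases $3$ and $4$ ($\Lambda^\iota = \langle 2 \rangle \oplus \langle -2 \rangle$), the unique $(-2)$-vector up to sign is $g$, whose reflection $s_g$ is orientation-preserving (since $g$ is in the negative part) and Hodge (since $T(X) \subset g^\perp$), and is not induced by $\iota$ (which acts as $+1$ on $\Lambda^\iota$); it therefore contributes a second marking on $X$. The two cases are distinguished by whether a non-isomorphic birational model $X'$ exists: in case $3$ ($\div_\Lambda(g)=2$) the natural involution realises $X = \Hilb^2(S)$ together with the Lagrangian $\mathbb{P}^2 = S/\varphi$, whose Mukai flop produces a genuinely different $X'$, and this flop persists generically in the deformation family; in case $4$ ($\div_\Lambda(g) = 1$) the analogous construction yields a manifold isomorphic to $X$, so no new underlying manifold appears in the fibre.

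The most delicate step is the distinction between cases $3$ and $4$, namely showing that the case $3$ Mukai flop yields a genuinely non-isomorphic $X'$ which persists in the deformation family, while in case $4$ no non-isomorphic model arises. This rests on the wall-and-chamber analysis of the movable cone of $X$ in each case, where the divisibility of $g$ in $\Lambda$ is the decisive invariant. The remaining verifications --- the description of $O(\Lambda^\iota_J)$, orientation preservation for $s_{e-f}$ and $s_g$, and the gluing compatibility in the overlattice $\Lambda \supset \Lambda^\iota_J \oplus T(X)$ --- are routine once this structural picture is in place.
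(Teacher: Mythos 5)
Your framework --- computing the Hodge--monodromy stabiliser of $p$ in $O^+(\Lambda)$, quotienting by the image of $\mathrm{Aut}(X)$, and adding in the non-isomorphic birational models --- is a legitimate reformulation of the fibre count, and your lattice computations of the stabilisers (including the gluing and orientation checks) are essentially correct. But it is a genuinely different route from the paper's, which invokes Markman's bijection between $P_0^{-1}(p)$ and the set of K\"{a}hler-type chambers of the positive cone and then simply counts chambers using the numerical description of the walls for $K3^{[2]}$-type: the $(-2)$-walls and, by Hassett--Tschinkel, the $(-10)$-walls of divisibility $2$. The chamber count needs neither $\mathrm{Aut}(X)$ nor an a priori list of birational models as input, and this is precisely where your version has real gaps.

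First, you assume $\mathrm{Aut}(X)=\langle\iota\rangle$ for generic $X$, but in the paper this is proved \emph{after} and \emph{using} Proposition \ref{fibres} (one needs to know which isometries of $\NS(X)$ preserve the fundamental exceptional chamber), so taking it as an input is circular. More to the point, even granting $\mathrm{Bir}(X)=\langle\iota\rangle$, to conclude that $(X,\eta)\neq(X,s\eta)$ you must show that $s$ is not induced by any automorphism, i.e.\ that $s$ does not preserve the K\"{a}hler cone --- which again requires knowing where the K\"{a}hler cone sits relative to the $(-2)$-wall. Second, your distinction between cases $3$ and $4$ is asserted rather than proved. The decisive fact is that in case $3$ the lattice $\langle 2\rangle\oplus\langle-2\rangle$ with $\div_{\Lambda}(g)=2$ contains a $(-10)$-class of divisibility $2$ (e.g.\ $2h+3g$), producing a flopping wall that subdivides the movable cone, whereas in case $4$ every $(-10)$-class has divisibility $1$ and no such wall exists; there is no ``analogous construction'' in case $4$ whose output merely happens to be isomorphic to $X$. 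Moreover, to see that the extra chambers in case $3$ really correspond to a non-isomorphic model $X'$ and that the total is exactly four, the paper uses the density of Hilbert schemes in moduli (Markman--Mehrotra) together with Bayer--Macr\`{i}'s computation of the ample cone of $\Hilb^2(S)$; your appeal to ``the flop persisting generically in the family'' does not substitute for this. Filling these gaps would in effect force you back onto the paper's chamber-counting argument.
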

\begin{proof}
Elements in the fibre $P_0^{-1}(p)$ are in one-to-one correspondence with the K\"{a}hler-type chambers by (\cite[Sect.\ 5]{markman}).
For manifolds of $K3^{[2]}$-type, the exceptional chambers are cut out by $(-2)$-walls.
On the other hand, \cite{HT} shows that the ample cone contains one of the chambers which are cut out further 
by $(-10)$-walls with divisor $2$. 
It is easy to classify these walls for each classes.

In No.\ $1$ and $4,$ there is a unique $(-2)$-wall and no $(-10)$-walls, since the only $(-10)$-class in $\NS(X)$ has divisor $1$. 
Thus the positive cone has two chambers, exactly one of which is ample.
The reflection $s$ in the $(-2)$-wall gives a nontrivial monodromy element, and the result follows. 
In No.\ $2,$ there are no $(-2)$ nor $(-10)$ classes, hence there are no walls and we get $|P_0^{-1} (p)|=1$. 

In No.\ $3,$ we have both $(-2)$ and $(-10)$-walls which together separate the positive cone into four chambers. 
Note that by \cite[Lemma 3.4]{MM}, we have at least one Hilbert scheme 
in the fibre in this case. By the computation of the ample cone of Hilbert schemes by \cite[Lemma 13.3]{BM}, we see that in fact 
the fiber $P_0^{-1}(p)$ consists of four elements. They are as follows:
Since $s$ gives a nontrivial monodromy element, $(X,\eta)$ and $(X,s\eta)$
are two of them. Corresponding to the other chambers there is another bimeromorphic model $X'\sim X$, 
which, together with $\eta'$ and $s\eta'$, where $\eta'$ is the composite of 
$\eta$ with the isomorphism $H^2(X')\simeq H^2(X)$, gives the other points of the fibre.
\end{proof}
\begin{cor}\label{cordefeq}
Involutions in the same conjugacy class are deformation equivalent for No.\ $1,$ $2,$ $4.$ For No.\ $3,$ 
any involution can be deformed to one of two distinguished examples, namely the family of natural involutions or its Mukai flop.
\end{cor}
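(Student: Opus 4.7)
The plan is to reduce the problem to the generic case (involutions whose period lies in $\Omega_J^0$) and then exploit the connectedness of $\Omega_J^0$ together with the local deformation result Lemma \ref{smalldeformation}. By Lemmata \ref{smalldeformation} and \ref{density}, any involution in conjugacy class No.\ $J$ can be deformed through its Kuranishi family to a generic involution, so it suffices to show that any two generic involutions in the same conjugacy class are deformation equivalent.

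For two generic involutions $(X_1,\iota_1)$ and $(X_2,\iota_2)$ with periods $p_1,p_2\in\Omega_J^0$, I would connect $p_1$ and $p_2$ by a continuous path $\gamma\colon[0,1]\to\Omega_J^0$, which exists by Lemma \ref{density}. Compactness of $[0,1]$ combined with Lemma \ref{smalldeformation} yields a finite cover of the path by small neighborhoods $N_1,\dots,N_k$, each carrying its own family of involutions deforming a generic involution at its centre. Picking intermediate points $q_i\in\Omega_J^0\cap N_i\cap N_{i+1}$, the task reduces to comparing the two local families at each $q_i$: if they induce the same biregular involution on the fibre, the chain of local families composes to a deformation equivalence connecting $\iota_1$ to $\iota_2$.

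The comparison at each $q_i$ is governed by Proposition \ref{fibres}. In cases No.\ $1, 2, 4$ the fibre $P_0^{-1}(q_i)$ consists of different markings of a single underlying manifold. Since the Hodge structure at $q_i$ is generic and $\NS(X_{q_i})=\Lambda^{\iota}_J$, Torelli forces the biregular involution realising conjugacy class No.\ $J$ to be essentially unique; hence the two local families must produce the same involution at $q_i$ and glue, establishing the statement in cases No.\ $1, 2, 4$. In case No.\ $3$ the fibre consists of two distinct bimeromorphic models $X$ and $X'$, so a continuous deformation of \emph{biregular} involutions cannot jump between them and the set of deformation classes splits into at most two. Combining Lemma \ref{aruyo} with the explicit description from the remark following Theorem \ref{4families}, I would identify one class with the natural involution on $\Hilb^2(S)$ and the other with the induced biregular involution on its Mukai flop, giving exactly two classes.

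The hard part I anticipate will be justifying carefully that at each overlap point $q_i$ in the No.\ $1, 2, 4$ situation, the two locally constructed families actually define the \emph{same} biregular involution on the common fibre. This hinges on two ingredients: the marking ambiguity by the reflection $s$ does not alter the underlying biregular automorphism, and at a generic period the Hodge isometry of order two with invariant lattice $\Lambda^{\iota}_J$ is unique up to this ambiguity, so Torelli extracts a single automorphism from either family.
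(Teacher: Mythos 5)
Your overall strategy --- reduce to generic involutions via Lemmata \ref{smalldeformation} and \ref{density}, chain local families along a path in $\Omega_J^0$, and use Proposition \ref{fibres} to compare the two families at each overlapping generic period --- is exactly the paper's. At the overlap the paper makes your ``essentially unique'' step precise: both generic involutions act as the identity on $\NS(X)\cong\Lambda^{\iota}_J$ and as $-1$ on its orthogonal complement, so their cohomological actions agree under the isomorphism $X_1\simeq X_2$ supplied by Proposition \ref{fibres}, and then the faithfulness of $\mathrm{Aut}(X)\to\mathrm{Aut}(H^2(X,\Z))$ for $K3^{[2]}$-type (Lemma \ref{faithful}) forces the two biregular involutions to coincide. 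Your appeal to ``Torelli'' is pointing at the right ingredient, but you should isolate this faithfulness statement explicitly; note also that the marking ambiguity by the reflection $s$ plays no role here, since the involution's action on $H^2$ is pinned down intrinsically rather than through the marking.

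The genuine gap is in case No.\ $3$: you take for granted that the Mukai flop carries a \emph{biregular} involution, citing the remark after Theorem \ref{4families}, but that remark only concerns the natural involution, and the paper emphasises that the biregular involution on the flop is observed there for the first time. A substantial part of the paper's proof is devoted to constructing it: one blows up $X=\Hilb^2(S)$ along the fixed plane $P$, checks that $\varphi^{[2]}$ lifts to the blowup fixing the exceptional divisor pointwise, contracts in the other direction to obtain $X'$, and then extends the resulting action across the codimension-two locus $P^*$ to a holomorphic one; projectivity of $X'$ is verified via its description as a moduli space of torsion sheaves (a compactified relative Jacobian). Without this construction the second ``distinguished example'' in your dichotomy is only a birational involution, and since the corollary concerns deformation families of biregular automorphisms (Definition \ref{deformation}), the No.\ $3$ case is not established. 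A smaller point: you claim ``exactly two classes,'' whereas the argument only yields that every involution deforms to one of the two models; whether the two models are themselves deformation equivalent is left open in the paper.
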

\begin{proof}
At an overlapping generic period we have two involutions $\iota_i$ on $X_i$ $(i=1,2)$ from different families. 
The above proposition shows that in fact $X_1\simeq X_2$ in No.\ $1,$ $2,$ $4.$ Since they are generic, the actions of $\iota_i$ on 
the second cohomology coincide via this isomorphism, i.e., they are given by 
the identity on the N\'{e}ron-Severi group and $-1$ on the orthogonal.
By the next lemma, the two involutions themselves coincide, and we can connect the families of involutions.
\begin{lem}\label{faithful}
Let $X$ be an irreducible symplectic manifold of $K3^{[2]}$-type.
Then the representation $\mathrm{Aut}(X)\rightarrow \mathrm{Aut} (H^2(X,\Z))$ 
is faithful.
\end{lem}
\begin{proof}
This should be known to experts; the proof is contained for example in \cite[Lem. 7.1.3]{Mon2}.
Alternatively we can imitate the proof of Lemma \ref{smalldeformation} and combine with \cite{Beau} and the density results
by \cite{MM} (Use the subset $B$ in the proof of Theorem 1.1 of \cite{MM}, where the fibres of the period map 
are all Hilbert schemes).
\end{proof}
We continue the proof of Corollary \ref{cordefeq} in No. 3. In this case, 
as Proposition \ref{fibres} shows, there exist two models of involutions in the same conjugacy class. 
The proof of deformation equivalence to one of these two is the same as above.
In the rest of proof we describe these involutions.

Let $\pi \colon S\rightarrow \mathbb{P}^2$ be a $K3$ surface doubly covering $\mathbb{P}^2$ branched along a smooth sextic. 
Let $\varphi$ be the covering involution acting on $S$.
The first involution, as mentioned several times, is the Hilbert scheme of two points of $S$ with the natural involution 
$\iota=\varphi^{[2]}$. It fixes $\mathrm{Hilb}^2(C)\subset \mathrm{Hilb}^2(S)$ and the surface
\[P:= \{\{ x,\varphi (x)\}\in \mathrm{Hilb}^2(S)\} \simeq S/\varphi= \mathbb{P}^2.\]
To obtain the other model, we perform the Mukai flop along $P$ and get $\psi \colon X\dashrightarrow X'$. 
By this operation, the projective plane $P$ is replaced 
by its dual $P^*=|\mathcal{O}_P(1)|$. It is easy to see that the involution $\iota$ acts regularly on 
$X'-P^*$ via $\psi$. What is important here, is 
that $\iota$ extends to a biregular involution of $X'$: let $Z\rightarrow X$ be the blowing up along $P$. We see that $\iota$ extends to $Z$ by fixing the exceptional divisor $E$ pointwise. Since $X'$ is given by the contraction of $E$ 
in another direction, $X'$ has a well-defined continuous action by $\iota$ fixing $P^*$. 
As a holomorphic automorphism, the (possibly) undefined set $P^*$ of $\iota$ 
on $X'$ is of codimension two, hence the action extends to a biregular action as stated. 
Let $\theta\colon H^2(X,\Z)\stackrel{\sim}{\rightarrow} H^2(X',\Z)$ be the isomorphism of integral Hodge structures 
induced by the flop. 
Since $\psi$ is not an isomorphism, $\theta$ does not preserve any k\"{a}hler class
by \cite{Fuj}. Since the period map sends both
$(X,\eta)$ and $(X',\eta\theta^{-1})$ to the same point, we see that these two models fill the fibres of Proposition \ref{fibres},
No. 3.
We note that $X'$ is obtained also as the relative compactified Jacobian 
of curves of genus two in the linear system
$|\pi^* \mathcal{O}_{\mathbb{P}^2}(1)|$ (cf.\ \cite{mukai}). The involution on the Jacobian of a smooth curve 
%in $|\pi^* \mathcal{O}_{\mathbb{P}^2}(1)|$ 
is induced from the hyperelliptic involution. Alternatively, the space $X'$ can be characterised as the moduli space of torsion sheaves with Mukai vector
$(0,\pi^* \mathcal{O}_{\mathbb{P}^2}(1),1)$. These interpretations show that $X'$ is in fact projective. 
%It is likely that this family of natural involutions and its Mukai flop are deformation equivalent, too. A partial reason 
%is that their fixed locus $\mathrm{Hilb}^2 (C)\cup \mathbb{P}^2$ is the same. But we have no proof of this statement.
\end{proof}

We end this section with the following remark:

\begin{prop}
Let $\iota$ be a generic involution on $X$. Then the full automorphism group $\mathrm{Aut}(X)$ and the birational automorphism group $\mathrm{Bir}(X)$ are both generated 
by $\iota$. 
\end{prop}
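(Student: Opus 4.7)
The plan is to apply Markman's Hodge-theoretic Torelli theorem together with the lattice and chamber analysis already developed in the paper. By Lemma \ref{faithful} (and the analogous faithfulness of $\mathrm{Bir}(X)\hookrightarrow O(H^2(X,\Z))$, which follows from the same argument), it suffices to identify the images of $\mathrm{Aut}(X)$ and $\mathrm{Bir}(X)$ in $O(H^2(X,\Z))$; Markman identifies these with the subgroups of Hodge monodromy isometries preserving, respectively, the ample cone and the birational K\"ahler cone of $X$.

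I would first constrain the shape of a Hodge monodromy isometry $\varphi$ of $H^2(X,\Z)$. After intersecting $\Omega_J^0$ with a further countable union of proper closed subsets of $\Omega_J$ (still dense by Baire), one may assume that the only Hodge isometries of the transcendental lattice $T(X)=(\Lambda^{\iota}_J)^{\perp}$ are $\pm\id$, a standard rigidity statement at the very general period. Hence $\varphi=\psi\oplus\epsilon\,\id_{T(X)}$ with $\psi\in O(\Lambda^{\iota}_J)$ and $\epsilon\in\{\pm1\}$. Using the description of $O(\Lambda^{\iota}_J)$ computed in the proof of Proposition \ref{involutions}, together with the gluing compatibility to $T(X)$ over the (always $2$-torsion) discriminant groups and the orientation condition for membership in $\mathrm{Mon}^2$, the candidates reduce to $\{\id,\iota^{*}\}$ in No.\ $2$, and to $\{\id,\iota^{*},s_{g},s_{g}\iota^{*}\}$ in Nos.\ $1$, $3$ and $4$, where $s_{g}$ denotes the reflection in the $(-2)$-generator $g$ of $\Lambda^{\iota}_J$. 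The gluing is what kills the swap on $U(2)$ in No.\ $2$, while in the other cases it imposes no extra restriction beyond orientation.

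Finally, I would apply the wall-and-chamber picture from the proof of Proposition \ref{fibres} to eliminate $s_g$ and $s_g\iota^{*}$ in the remaining cases. In Nos.\ $1$ and $4$ the class $g$ has divisibility $1$ in $\Lambda$, so the single $(-2)$-wall is a divisorial contraction wall: $X$ has no other birational model, the birational K\"ahler cone equals the ample cone (one of the two chambers separated by the wall), and $s_{g}$ exchanges the two chambers. In No.\ $3$, $g$ has divisibility $2$ and the $(-2)$-wall is a Mukai flop wall between $X$ and its flop $X'$; using in addition the $(-10)$-walls of divisibility $2$ cut out by the classes $\pm 2h\pm 3g$, a direct inspection shows that the ample cones of $X$ and $X'$ lie in the same half-space of $\NS(X)_{\R}$ with respect to $g$, so $s_{g}$ maps the birational K\"ahler cone $\mathcal{K}(X)\cup\psi^{*}\mathcal{K}(X')$ entirely into the opposite half-space, outside the birational K\"ahler cone. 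Hence $s_{g}\notin\mathrm{Bir}(X)$ in all cases, and combining with the previous step gives $\mathrm{Bir}(X)=\mathrm{Aut}(X)=\langle\iota\rangle$. The main obstacle is the No.\ $3$ bookkeeping: one must pin down the precise locations of the two ample chambers of $X$ and $X'$ relative to the $(-2)$- and $(-10)$-walls, which is where the explicit Hilbert-scheme model recalled in the proof of Corollary \ref{cordefeq} becomes useful.
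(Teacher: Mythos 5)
Your argument is in substance the paper's own: reduce to the action on $H^2$ via faithfulness and Fujiki, split along $\NS(X)\oplus T(X)$, pin the $T(X)$-action down to $\pm\id$, and then eliminate the nontrivial candidates on $\NS(X)$ using the discriminant-group gluing (No.\ $2$) and preservation of the fundamental exceptional chamber (Nos.\ $1$, $3$, $4$). Your explicit enumeration of $O(\Lambda^{\iota}_J)$ and of the $(-2)$- and $(-10)$-walls matches the paper's ``simple computations,'' and the chamber bookkeeping in No.\ $3$ is correct: the birational K\"ahler cone lies entirely on one side of $g^{\perp}$, so $s_g$ cannot lie in the image of $\mathrm{Bir}(X)$.

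The one genuine defect is your first step. You obtain $\varphi|_{T(X)}=\pm\id$ only after shrinking $\Omega_J^0$ to a further countable intersection, i.e.\ you prove the proposition for a \emph{very general} involution, whereas the statement (with ``generic'' as defined after Lemma \ref{density}) asserts it for every period in $\Omega_J^0$; at non--very-general points of $\Omega_J^0$ the transcendental Hodge structure may acquire extra endomorphisms, so the rigidity you invoke is not available there. The gap is real but easily closed, and the paper's route avoids it entirely: any Hodge isometry of $T(X)$ preserves a positive-definite $2$-plane and its negative-definite complement, hence has finite order; by Ueno it acts on $H^{2,0}$ by a primitive $N$-th root of unity with $\phi(N)$ dividing $\rk T(X)=21$, which is odd, forcing $N\le 2$; and by Nikulin the case $N=2$ forces $-\id$ on $T(X)$. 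With that substitution your proof covers all generic involutions and coincides with the paper's.
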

\begin{proof}
Let us pick up an element $\varphi\in \mathrm{Bir}(X)$. 
Since it acts isomorphically in codimension $1$, it acts on the second cohomology and 
gives an isometry (\cite{Huy}).
Since $X$ is projective, we can apply
\cite[Prop.\ 14.5]{ueno} to see that $\varphi$ acts on $H^{2,0}(X)$ by some primitive $N$-th root of unity. 
From \cite{Nik2} we deduce that $\phi (N)$ divides the rank of the orthogonal complement $T$ of $\NS(X)\subset H^2(X,\Z)$, where
$\phi$ is the Euler function. But here $T$ has rank $21$, which is an odd number, hence $N=1$ or $2$. 
In the latter case, \cite{Nik2} also shows that $\varphi|_T$ is the negation.
In particular, in any case, 
$\varphi|_T$ acts on the discriminant group $T^*/T$ trivially. See Proposition \ref{involutions} for the descriptions of $T$ in each case. 

On the other hand, the induced action on $\NS(X)$ preserves the 
fundamental exceptional chamber (the one which contains the ample cone) 
described in Proposition \ref{fibres} (\cite[Lemma 5.11]{markman}).
By simple computations, in No. $1$, $3$ and $4$ it follows that $\varphi$ is trivial on $\NS(X)$. 
Even in No. $2$, we see that the nontrivial isometry of $\NS(X)$ does not extend to $H^2$ by looking at the discriminant 
groups. Hence in this case also $\varphi$ must act trivially on $\NS(X)$. 

Thus the image of the representation
$\mathrm{Bir}(X)\rightarrow \mathrm{Aut} (H^2(X,\Z))$ consists only of two elements. 
With \cite{Fuj} and Lemma \ref{faithful}
we can conclude that $\mathrm{Bir}(X)=\mathrm{Aut}(X)=\{\mathrm{id}_X,\iota \}.$
\end{proof}

%%%%%%%%%%%%%%%%%%%%%%%%%%%%%%%%%%%%%%%%%%%%%%%%%%%%%%%%%%%%%%%%%%%%%%%%%%%%%%%%%%%%%%%%%%%%%%%%%%%%

\section{Induced Automorphisms on Moduli Spaces of Sheaves}
In this section some general aspects of automorphisms on moduli spaces of sheaves on $K3$ surfaces which are induced by automorphisms of the underlying surface are discussed.

The following proposition on induced automorphisms on moduli spaces is the central result of this section. Though we are only interested in the case of moduli spaces on $K3$ surfaces we state it in a more general setting:

\begin{prop}\label{propinducedauto}
Let $(X,H)$ be a polarised smooth projective variety, $\varphi$ an automorphism of $X$ preserving $H$ and $v\in \cH^\ast(X,\mathbb{Q})$ a Mukai vector. Then $\varphi$ induces a biregular isomorphism $\iota\colon\mc{M}^s(v)\rightarrow \mc{M}^s(\varphi^\star v).$ In particular, if $v$ is invariant under the induced action of $\varphi,$ we obtain a biregular automorphisms of $\mc{M}^s(v).$
\end{prop}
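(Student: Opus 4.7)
The plan is to construct $\iota$ as the morphism induced by pullback along $\varphi$, and to check separately that (i) pullback preserves stability and Mukai vectors, and (ii) this operation descends to a morphism of coarse moduli spaces whose inverse is given by $(\varphi^{-1})^{*}$.

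First I would set up the construction on sheaves. For a coherent sheaf $\mc{F}$ on $X$, consider $\varphi^{*}\mc{F}$. Since $\varphi$ is an automorphism, $\varphi^{*}$ is exact, preserves the rank and coherence, and by naturality of Chern character together with $\varphi^{*}\td_X=\td_X$, one has $v(\varphi^{*}\mc{F})=\varphi^{*}v(\mc{F})$; so $\mc{F}\mapsto \varphi^{*}\mc{F}$ sends Mukai vector $v$ to Mukai vector $\varphi^{*}v$. Next, since $\varphi$ preserves $H$, the Hilbert polynomial computed with respect to $H$ satisfies $P_{\varphi^{*}\mc{F}}=P_{\mc{F}}$, and subsheaves of $\varphi^{*}\mc{F}$ correspond bijectively (via $\varphi^{*}$) to subsheaves of $\mc{F}$; hence the existence of a destabilising subsheaf of $\varphi^{*}\mc{F}$ would force one on $\mc{F}$, contradicting $H$-stability. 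Therefore $\varphi^{*}$ carries $H$-stable sheaves with Mukai vector $v$ to $H$-stable sheaves with Mukai vector $\varphi^{*}v$, and is bijective on the set of isomorphism classes with inverse $(\varphi^{-1})^{*}$.

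Second I would upgrade this bijection to a morphism of schemes. Given a flat family of stable sheaves $\mc{E}$ on $X\times T$ with Mukai vector $v$ over a scheme $T$, the sheaf $(\varphi\times\id_T)^{*}\mc{E}$ is again a flat family on $X\times T$, now of $H$-stable sheaves with Mukai vector $\varphi^{*}v$, so pullback defines a natural transformation $\underline{\mc{M}}^{s}(v)\to \underline{\mc{M}}^{s}(\varphi^{*}v)$ of the (open) moduli functors of stable sheaves. Because $\mc{M}^{s}(v)$ and $\mc{M}^{s}(\varphi^{*}v)$ corepresent these functors, this transformation induces a morphism of schemes $\iota\colon\mc{M}^{s}(v)\to\mc{M}^{s}(\varphi^{*}v)$. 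The same construction applied to $\varphi^{-1}$ yields a morphism $\mc{M}^{s}(\varphi^{*}v)\to\mc{M}^{s}(v)$ which is inverse to $\iota$ on points (by the bijection of the previous paragraph) and hence inverse as morphisms by the corepresentability; this gives the biregular isomorphism. In the particular case $\varphi^{*}v=v$ the source and target agree and $\iota$ is an automorphism of $\mc{M}^{s}(v)$.

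The only genuinely delicate point I expect is the descent to coarse moduli; one needs that pullback by $\varphi$ is compatible with the equivalence relation used to build $\mc{M}^{s}(v)$, i.e.\ sends $S$-equivalent families to $S$-equivalent families (trivial on the stable locus since stable sheaves are simple and $S$-equivalence reduces to isomorphism there) and commutes with twisting by pullback of line bundles from $T$ that is quotiented out in the GIT construction. Both are immediate from $\varphi$ being an automorphism preserving $H$, so no real obstruction arises, and the stability part of the argument is where the hypothesis $\varphi^{*}H\simeq H$ is genuinely used.
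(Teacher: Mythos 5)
Your proof is correct, and it reaches the same goal by a somewhat different technical route. Both you and the paper build $\iota$ from the pullback $\mc{F}\mapsto\varphi^{\star}\mc{F}$; the difference lies in how the set-theoretic map is promoted to a morphism of schemes. The paper works with (Zariski-)local universal families $\mc{E}_U$ on the stable locus, takes the classifying morphisms of their pullbacks $(\varphi\times\id_U)^{\star}\mc{E}_U$, and then checks that these local morphisms glue because two local universal families differ only by a twist $p_V^{\star}\mc{L}$, which does not change the classifying map. You instead observe that $T\mapsto (\varphi\times\id_T)^{\star}(-)$ is a natural transformation of the moduli functors of families of stable sheaves and invoke corepresentability of $\mc{M}^{s}(v)$ and $\mc{M}^{s}(\varphi^{\star}v)$ to get the morphism directly, with $(\varphi^{-1})^{\star}$ supplying the inverse; your remark that pullback commutes with twisting by $p_T^{\star}\mc{L}$ is exactly the point that makes the transformation descend to the functor actually corepresented, so nothing is missing. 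Your route is arguably cleaner: it needs no universal family at all, not even locally, and it immediately yields the relative version stated in Remark \ref{reminducedauto}. You also spell out two verifications the paper leaves implicit, namely that $\varphi^{\star}$ preserves $H$-stability (via invariance of Hilbert polynomials and the bijection on subsheaves, which is where $\varphi^{\star}H\simeq H$ is used) and that $v(\varphi^{\star}\mc{F})=\varphi^{\star}v(\mc{F})$ (via $\varphi^{\star}\td_X=\td_X$). The discussion of $S$-equivalence is, as you note, vacuous on the stable locus and could be omitted.
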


\begin{proof}
Pointwise the automorphism $\iota$ should map a stable sheaf $\mc{F}$ to the pullback $\varphi^\star\mc{F}.$ If $\mc{M}^s(v)$ is a fine moduli space, this assignment can be turned into a global morphism: Let $\mc{E}$ be a universal family on $X\times\mc{M}^s(v)$ We consider its pullback \linebreak$(\varphi\times\id_{\mc{M}^s(v)})^\star\mc{E}.$ This is a flat family of stable sheaves with Mukai vector $\varphi^\star v$ parametrised by $\mc{M}^s(v)$ and, by the universal property of $\mc{M}^s(\varphi^\star v),$ we get a classifying morphism $\iota\colon\mc{M}^s(v)\rightarrow\mc{M}^s(\varphi^\star v).$

In general a universal family only exists locally. It is not unique and hence we cannot glue local families. But, in fact, the associated local classifying morphisms are unique and by gluing we obtain a global automorphism of the moduli space. At this point I would like to thank J.\ Kass for proposing this strategy. Let us give some more details: For every point $[\mc{F}]$ in $\mc{M}^s(v)$ we can find an open neighbourhood $U$ and a universal family $\mc{E}_U$ on $X\times U$ representing the functor $\mc{M}^s_U(-)$ associating to a scheme $T$ the set of isomorphism classes of $T$-flat families $\mc{F}$ of stable sheaves such that for all closed points $t\in T$ the class of the restriction $\mc{F}_t$ is in $U.$ We pullback $\mc{E}_U$ along $\varphi\times\id_U$ and obtain a $U$-flat family of stable sheaves with Mukai vector $\varphi^\star v.$ Thus there is a classifying morphism $\iota_U\colon U\rightarrow \mc{M}^s(\varphi^\star v),$ which is obviously injective and maps the class of a sheaf $\mc{F}$ to the class of $\varphi^\star\mc{F}$ as above. Now we want to glue the morphisms $\iota_U.$  Thus consider two open subsets $U$ and $U'$ together with universal families $\mc{E}_U$ and $\mc{E}_{U'}$ and classifying morphisms $\iota_U$ and $\iota_{U'}$ and denote the intersection $U\cap U'$ by $V.$ By the universal property of the universal families we have $\mc{E}_U|_V\simeq\mc{E}_U'|_V\otimes p_V^\star\mc{L},$ where $\mc{L}$ is a line bundle on $V$ and $p_V\colon X\times V\rightarrow V$ denotes the second projection. On open subsets of $V$ where $\mc{L}$ is trivial, the two universal families are isomorphic and thus yield the same classifying morphism. Thus, in fact, the restrictions $\iota_U|_V$ and $\iota_{U'}|_V$ coincide and we can glue $\iota_U$ and $\iota_{U'}$ to obtain a morphism $\mc{M}^s(v)\rightarrow \mc{M}^s(\varphi^\star v).$\end{proof}

\begin{rem}\label{reminducedauto}
The proposition above can certainly be generalised to the relative setting: Let $\mc{X}\rightarrow T$ be a $T$-flat family of smooth projective varieties together with an automorphism $\varphi$ of $\mc{X}$ that acts fibrewise. Let $\mc{L}$ be a polarisation on $\mc{X}$ such that for every $t\in T$ the restriction $\mc{L}_t$ is preserved by the automorphism $\varphi_t$ on the fibre $\mc{X}_t.$ Furthermore let $v\in\cH^\ast(\mc{X},\ZZ)$ be a Mukai vector, such that for all $t\in T$ the pullback $v_t$ to the fibre $\mc{X}_t$ is $\varphi_t$-invariant. (It is enough to check this on one fibre if $T$ is connected.) Then $\varphi$ induces a biregular automorphism $\iota$ on the relative moduli space $\mc{M}_T^s(v)\rightarrow T$ such that the restriction $\iota_t$ coincides with the induced automorphism on $(\mc{M}_T^s(v))_t$ from Proposition \ref{propinducedauto} above.
\end{rem}

\begin{rem} If $\mc{M}^s(v)$ is compact and fine, the universal family $\mc{E}$ is simple. Thus if $\varphi$ is an involution, $\mc{E}$ admits a linearisation with respect to $(\varphi\times\iota)$.\end{rem}

Let us return to moduli spaces of sheaves on $K3$ surfaces.

\begin{lem}\label{lemogradyeq}
O'Grady's isomorphism $H^2(\mc{M}(v),\ZZ)\cong v^\perp$ of Theorem \ref{thmogrady} is equivariant with respect to $\varphi$ and $\iota$. Thus we can compute the invariant lattice as
\[H^2(\mc{M}(v),\ZZ)^\iota\cong(v^\perp)^\varphi.\]
\end{lem}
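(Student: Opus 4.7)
The plan is to examine O'Grady's construction of $\theta_v\colon v^\perp\stackrel{\sim}{\to}H^2(\mc{M}(v),\ZZ)$ and to verify its naturality under the actions of $\varphi$ and $\iota$. Recall that, given a (quasi-)universal family $\mc{E}$ on $S\times\mc{M}(v)$ with projections $p\colon S\times\mc{M}(v)\to\mc{M}(v)$ and $q\colon S\times\mc{M}(v)\to S$, the Mukai morphism is defined as
\[
\theta_v(x)=\bigl[p_{*}\bigl(q^*(x^\vee)\cdot\ch(\mc{E})\cdot q^*\sqrt{\td_S}\bigr)\bigr]_2,
\]
where $x^\vee$ denotes the Mukai involution and $[\,\cdot\,]_2$ the degree-two K\"unneth component. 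The key property (cf.\ \cite{O'G3}) is that the restriction $\theta_v|_{v^\perp}$ is independent of the chosen (quasi-)universal family: tensoring $\mc{E}$ with $p^*\mc{L}$ for a line bundle $\mc{L}$ alters the expression only by a term proportional to $p_{*}\bigl(q^*v^\vee\cdot\ch(\mc{E})\cdot q^*\sqrt{\td_S}\bigr)$, which pairs to zero with elements of $v^\perp$.

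First I would recall from Proposition \ref{propinducedauto} and Remark \ref{reminducedauto} that $\iota$ sends $[\mc{F}]$ to $[\varphi^*\mc{F}]$. At the level of universal families this amounts to the statement that $(\varphi\times\iota)^*\mc{E}$ is again a (quasi-)universal family on $S\times\mc{M}(v)$ parametrising sheaves of Mukai vector $v$ (using the hypothesis $\varphi^*v=v$ that makes $\iota$ an automorphism of $\mc{M}(v)$). Consequently $(\varphi\times\iota)^*\mc{E}$ differs from $\mc{E}$ only by the twist by a line bundle pulled back from $\mc{M}(v)$.

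Next I would compute $\iota^*\theta_v(x)$ for $x\in v^\perp$. The commuting square formed by $p$ and $\varphi\times\iota$ above $\iota$ yields $\iota^*\circ p_{*}=p_{*}\circ(\varphi\times\iota)^*$. Applying $(\varphi\times\iota)^*$ to the integrand and using $q\circ(\varphi\times\iota)=\varphi\circ q$, the $\varphi^*$-invariance of $\td_S$, and the commutation of the Mukai involution with $\varphi^*$, the integrand transforms into
\[
q^*\bigl((\varphi^*x)^\vee\bigr)\cdot\ch\bigl((\varphi\times\iota)^*\mc{E}\bigr)\cdot q^*\sqrt{\td_S}.
\]
This is precisely the expression that computes $\theta_v(\varphi^*x)$ with respect to the (quasi-)universal family $(\varphi\times\iota)^*\mc{E}$. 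Since $\varphi^*x$ again lies in $v^\perp$ (because $\varphi^*$ is a Mukai isometry fixing $v$), independence of $\theta_v|_{v^\perp}$ from the choice of (quasi-)universal family gives $\iota^*\theta_v(x)=\theta_v(\varphi^*x)$. The invariant-lattice formula follows immediately by taking $\varphi$-fixed points.

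The main technical obstacle will be the possible non-existence of a global universal family. Following the local-to-global strategy in the proof of Proposition \ref{propinducedauto} (alternatively, keeping track of the similitude factor throughout a quasi-universal construction), the line-bundle ambiguity introduced by passing from $\mc{E}$ to $(\varphi\times\iota)^*\mc{E}$ is exactly the ambiguity that is killed after restricting to $v^\perp$, so the naturality argument goes through globally.
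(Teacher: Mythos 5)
Your proposal is correct and follows essentially the same route as the paper: both verify equivariance directly from O'Grady's formula $\alpha\mapsto \frac{1}{\sigma}p_\star\big[q^\star\alpha\cdot\ch(\mc{E})\cdot q^\star\sqrt{\td_S}\big]_3$ using the compatibility of $(\varphi\times\iota)$ with the projections and the behaviour of the (quasi-)universal family under pullback. If anything, you are more careful than the paper's one-line assertion that ``$\mc{E}$ is $(\varphi\times\iota)$-invariant'': your observation that $(\varphi\times\iota)^\star\mc{E}$ is again a quasi-universal family and that the resulting twist ambiguity is exactly killed by the independence of $\theta_v|_{v^\perp}$ from the choice of family is the clean way to close that gap.
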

\begin{proof}
This is an easy consequence of the construction of the above isomorphism: Let $\mc{E}$ be a quasi-universal sheaf on $S\times\mc{M}(v)$ with multiplicity $\sigma$ and denote by $q\colon S\times\mc{M}(v)\rightarrow S$ and $p\colon S\times\mc{M}(v)\rightarrow \mc{M}(v)$ the natural projections. O'Grady defines a map $H^\ast(S,\mathbb{Z})\rightarrow H^2(\mc{M}(v),\mathbb{Z})$ by
\[\alpha\mapsto \frac{1}{\sigma}p_\star\big{[}q^\star\alpha\cdot\ch(\mc{E})\cdot q^\star\sqrt{\td_S}\big{]}_3, \hspace{10pt}\alpha\in H^\ast (S,\ZZ).\]
Here the $[-]_3$ indicates the projection onto $H^6(S\times\mc{M}(v),\mathbb{Z})$. Restricting this to $v^\perp$ yields the desired homomorphism, which is independent of the choice of $\mc{E}$. But by definition $\mc{E}$ is $(\varphi\times\iota)$-invariant. Thus $\ch(\mc{E})$ is invariant, too. Hence the lemma.
\end{proof}

The above observations lead to a quite general concept to construct and study automorphisms on moduli spaces of sheaves. This is applied in a very special situation in the next section. We want to point out that the method could be used in many other situations; for example also for moduli spaces of sheaves on abelian surfaces.

\section{The new Example}
In this section we construct a $19$-dimensional family of moduli spaces of sheaves on $K3$ surfaces using the methods of Section 3 and prove that it is a new example by means of the lattice theory developed in Section 2.

Let $\pi\colon S\rightarrow\mathbb{P}^2$ be a double cover branched along a smooth sextic curve $C$. This construction yields a $19$ dimensional family of $K3$ surfaces $S$ with involution $\varphi$ given by exchanging the covering sheets. Assume that $\NS(S)\cong\mathbb{Z}H$, where $H$ is the pullback of $\mc{O}_{\mathbb{P}^2}(1),$ thus $H^2=2$. The pullback of a general line $l\subset\PP^2$ is a smooth genus two curve in the linear system $|\mc{O}(H)|\cong |\mc{O}_{\PP^2}(1)|,$ the dual projective plane. Furthermore, from $\NS(X)\cong\mathbb{Z}H$ it follows that we only have three kinds of degenerations: If $l$ is tangent (bitangent) to $C,$ the pullback is an elliptic (rational) curve with one (two) ordinary double point(s) and if $l$ is tangent to $C$ in an inflection point, the pullback is an elliptic curve with a cusp. The sextic $C$ cannot have triple tangents since the pullback of such a line would split into two smooth rational curves in $S$ which would span a rank two lattice inside $\NS S.$ Note that, in particular, all curves in the linear system $|\mc{O}(H)|$ are reduced and irreducible. The locus of $S$ having Picard rank one is the complement of a countable union of closed subvarieties inside the moduli space of polarised $K3$ surfaces. And finally, it is well known that the involution $\varphi$ is non-symplectic. (If $\varphi$ preserved the symplectic form, the quotient would have to be symplectic as well.)

The involution $\varphi$ induces the natural involution $\ta{\varphi}$ on the corresponding Hilbert scheme of two points $\Hilb^2(S)$. By Lemma \ref{lemogradyeq} we see that the invariant lattice \linebreak $H^2(\Hilb^2(S),\ZZ)^\iota$ is spanned by $(0,H,0)$ and $(1,0,1)$. Thus it is isomorphic to $\langle 2 \rangle\oplus \langle -2 \rangle$. The second summand corresponds to half the class of the exceptional divisor in $\Hilb^2(S)$.\\

Now we come to the construction of the new example. Consider a length three subscheme $Z\subset S$ with ideal sheaf $\mc{I}_Z$.

\begin{lem}
We have 
\[h^1(\mc{I}_Z(H))=\begin{cases} 1 \text{ if $Z$ lies on a curve }D_Z\text{ in } |\mc{O}(H)|,\\ 0 \text{ otherwise}.  \end{cases}\]
\end{lem}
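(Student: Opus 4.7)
The plan is to exploit the short exact sequence
\[
0 \to \mc{I}_Z(H) \to \mc{O}_S(H) \to \mc{O}_Z(H) \to 0
\]
and read off $h^1(\mc{I}_Z(H))$ from its long exact sequence. First I would compute the cohomology of $\mc{O}_S(H)$. Using $\pi_\ast \mc{O}_S = \mc{O}_{\mathbb{P}^2}\oplus \mc{O}_{\mathbb{P}^2}(-3)$ (standard for the double cover branched along a sextic), one finds $h^0(\mc{O}_S(H))=3$, while Serre duality and the fact that $-H$ is not effective give $h^2(\mc{O}_S(H))=0$. Combined with $\chi(\mc{O}_S(H))=H^2/2+2=3$ on the $K3$ surface $S$, this forces $h^1(\mc{O}_S(H))=0$.

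Since $Z$ has length three, $h^0(\mc{O}_Z(H))=3$, so the long exact sequence collapses to
\[
0 \to H^0(\mc{I}_Z(H)) \to \mathbb{C}^3 \xrightarrow{\ \mathrm{ev}_Z\ } \mathbb{C}^3 \to H^1(\mc{I}_Z(H)) \to 0,
\]
whence $h^0(\mc{I}_Z(H))=h^1(\mc{I}_Z(H))$. It therefore suffices to show that $h^0(\mc{I}_Z(H))$ equals $1$ when $Z$ lies on a curve $D_Z\in|\mc{O}(H)|$ and $0$ otherwise. The ``otherwise'' half is tautological: $h^0(\mc{I}_Z(H))$ is precisely the space of sections of $\mc{O}_S(H)$ vanishing on $Z$, i.e.\ of curves in $|\mc{O}(H)|$ containing $Z$.

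The point to verify is that $h^0(\mc{I}_Z(H))\leq 1$. Here I would use the hypothesis $\NS(S)\cong \mathbb{Z}H$: by the observation recalled just before the lemma, every curve in $|\mc{O}(H)|$ is reduced and irreducible. Hence any two distinct members $D_1,D_2\in|\mc{O}(H)|$ share no common component, so
\[
\mathrm{length}(D_1\cap D_2)\;=\;D_1\cdot D_2\;=\;H^2\;=\;2,
\]
which is strictly smaller than $\mathrm{length}(Z)=3$. Thus $Z$ cannot simultaneously be contained in two distinct members of $|\mc{O}(H)|$, so $h^0(\mc{I}_Z(H))\leq 1$ and the lemma follows. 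The only step that needs any care is this last one; everything else is a direct Riemann--Roch computation on a $K3$ surface.
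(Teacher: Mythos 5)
Your proof is correct and follows essentially the same route as the paper: the twisted ideal-sheaf sequence, the observation that both middle terms of the long exact sequence are three-dimensional so $h^0(\mc{I}_Z(H))=h^1(\mc{I}_Z(H))$, and the key point that $Z$ cannot lie on two distinct members of $|\mc{O}(H)|$ because such members are reduced and irreducible and meet in length $H^2=2<3$. The only difference is that you spell out the vanishing $h^1(\mc{O}_S(H))=0$ explicitly, which the paper leaves implicit.
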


\begin{proof}
We have a short exact sequence
\[0\rightarrow \mc{I}_Z(H)\rightarrow \mc{O}_S(H)\rightarrow \mc{O}_Z\rightarrow 0.\]

The corresponding long exact sequence of cohomology starts with
\[0\rightarrow H^0(\mc{I}_Z(H))\rightarrow H^0(\mc{O}_S(H))\rightarrow H^0(\mc{O}_Z)\rightarrow H^1(\mc{I}_Z(H))\rightarrow 0.\]

Both terms in the middle are three dimensional and the map is just the evaluation map of the sections in the points of $Z$. Thus $h^1(\mc{I}_Z(H))\geq1$ if and only if $h^0(\mc{I}_Z(H))\neq0$, which exactly means that $Z$ is contained in a curve $D\in|\mc{O}(H)|$. But $Z$ cannot lie on two different curves $D\neq D'$ since $H^2=2<\mathrm{length}(Z)$ and $D$ and $D'$ cannot have common components because both are reduced and irreducible.
\end{proof}

Let us assume from now on that $Z$ is contained in a curve $D_Z$ in $|\mc{O}(H)|$. We therefore have a section $s\in H^0(\mc{I}_Z(H))$ and by the lemma a unique nontrivial extension
\begin{equation}\label{eqextension}
0\rightarrow \mc{O}_S\xrightarrow{\alpha} \mc{F} \rightarrow \mc{I}_Z(H)\rightarrow 0.
\end{equation}

\begin{lem}
Every such non-trivial extension $\mc{F}$ is stable.
\end{lem}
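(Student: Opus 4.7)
The plan is to derive a contradiction from assuming $\mc{F}$ is not stable. By additivity of the Mukai vector, $v(\mc{F}) = v(\mc{O}_S) + v(\mc{I}_Z(H)) = (2, H, 0)$, so $\mc{F}$ has rank two, $c_1 = H$ and $H$-slope $\mu_H(\mc{F}) = 1$. As an extension of torsion-free sheaves, $\mc{F}$ itself is torsion-free, so a putative destabilising subsheaf $\mc{G} \subset \mc{F}$ may be taken torsion-free of rank one, and since $\NS(S) = \Z H$ we have $c_1(\mc{G}) = kH$ for some $k \in \Z$; destabilisation $\mu_H(\mc{G}) \geq \mu_H(\mc{F})$ then reads $k \geq 1$. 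Writing $\beta \colon \mc{F} \to \mc{I}_Z(H)$ for the projection in \eqref{eqextension}, the composition $\beta|_\mc{G}$ cannot vanish --- otherwise $\mc{G} \subset \mc{O}_S$ and $k \leq 0$ --- hence it is injective, and postcomposing with the inclusion $\mc{I}_Z(H) \hookrightarrow \mc{O}_S(H)$ forces $k \leq 1$. Consequently $k = 1$, and the image $\mc{G}_Z := \beta(\mc{G}) \subset \mc{I}_Z(H)$ differs from $\mc{I}_Z(H)$ only by a zero-dimensional quotient $T := \mc{I}_Z(H)/\mc{G}_Z$.

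The contradiction then comes from a standard pullback/splitting argument. Set $\mc{H} := \beta^{-1}(\mc{G}_Z) \subset \mc{F}$. Pulling back \eqref{eqextension} along $\mc{G}_Z \hookrightarrow \mc{I}_Z(H)$ gives the short exact sequence
\[0 \to \mc{O}_S \to \mc{H} \to \mc{G}_Z \to 0,\]
and since $\mc{G} \subset \mc{H}$ maps isomorphically to $\mc{G}_Z$ via $\beta$, this sequence splits. On the other hand, applying $\Hom(-,\mc{O}_S)$ to $0 \to \mc{G}_Z \to \mc{I}_Z(H) \to T \to 0$ yields a long exact sequence in which the pullback map $\Ext^1(\mc{I}_Z(H),\mc{O}_S) \to \Ext^1(\mc{G}_Z,\mc{O}_S)$ is injective, because $\Ext^1(T,\mc{O}_S) = 0$ for a zero-dimensional sheaf $T$ on a smooth surface (the local-to-global spectral sequence reduces this to the fact that $\mathcal{E}xt^q(T,\mc{O}_S)$ is concentrated in degree two). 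The class of \eqref{eqextension} would therefore have to vanish in $\Ext^1(\mc{I}_Z(H),\mc{O}_S)$, contradicting its nontriviality.

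The only mildly technical ingredient is the vanishing $\Ext^1(T,\mc{O}_S) = 0$ for zero-dimensional $T$; beyond this standard fact the proof is essentially organisational, with the assumption $\NS(S) = \Z H$ doing the real work of constraining the possible first Chern classes of subsheaves. I do not foresee substantial obstacles beyond verifying these routine steps.
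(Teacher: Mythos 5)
Your proof is correct and is essentially the paper's own argument (which follows Yoshioka): reduce to rank-one subsheaves with $c_1=kH$, use $\NS(S)=\Z H$ and the slopes to force $k=1$ and a nonzero map to $\mc{I}_Z(H)$ with zero-dimensional cokernel, and then derive a splitting that contradicts the nontriviality of the extension via the vanishing of $\Ext^1$ of a zero-dimensional sheaf into $\mc{O}_S$. The only difference is cosmetic: you work with the image $\mc{G}_Z$ and the pulled-back extension directly, where the paper writes the saturated subsheaf as $\mc{I}_{Z'}(H)$ with $Z\subset Z'$.
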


\begin{proof}
This follows from a more general argument, which can be found in \cite[Lemma 2.1]{Yosh}. For the convenience of the reader we shall repeat it in this special case. We only have to check rank one subsheaves. These are all of the form $\mc{L}=\mc{O}_S(aH)\otimes\mc{I}_{Z'}$ with $a\in\mathbb{Z}$ and $Z'\subset S$ a finite length subscheme. The slope of $\mc{F}$ is $1$, the slope of $\mc{L}$ is $2a$. Hence if $\mc{L}$ is destabilising, we must have $a\geq1$. If the induced map $\mc{L}\rightarrow \mc{I}_Z(H)$ is non-zero, we must have $a=1$. Thus $\mc{L}=\mc{I}_{Z'}(H)$. The resulting map $\Ext^1(\mc{I}_Z(H),\mc{O}_S)\rightarrow\Ext^1(\mc{I}_{Z'}(H),\mc{O}_S)$ maps the class of (\ref{eqextension}) to zero. (The destabilising map $\mc{I}_{Z'}(H)\rightarrow \mc{F}$ induces a splitting of the sequence.) But the kernel is $\Ext^1(\mc{I}_Z(H)/\mc{I}_{Z'}(H),\mc{O}_S)=0$. This is a contradiction to the fact that our extension was chosen to be non-trivial. Thus we get a map $\mc{L}\rightarrow\mc{O}_S$, but since $a\geq 1,$ this has to be zero, too.
\end{proof}

\begin{prop}\label{propsmooth}
Denote the Mukai vector of $\mc{F}$ by $v_0$. The moduli space $\mc{M}(v_0)$ is an irreducible symplectic manifold with an induced regular involution $\iota$. 
\end{prop}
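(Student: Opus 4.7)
The plan is to reduce the statement to Theorem \ref{thmdefeqhilb} and Proposition \ref{propinducedauto}; the only genuine step is computing $v_0$ and verifying that stability is automatic. First, by additivity of the Mukai vector on the defining sequence (\ref{eqextension}), together with $H^2 = 2$ and $\mathrm{length}(Z) = 3$, I would compute
\[v_0 \;=\; v(\mc{O}_S) + v(\mc{I}_Z(H)) \;=\; (1,0,1) + (1,H,-1) \;=\; (2,H,0).\]
Thus $v_0^2 = 2$ (predicting a fourfold of $K3^{[2]}$-type), and $v_0$ is primitive in $\widetilde{H}(S,\ZZ)$ because $H$ is primitive in $\NS(S) = \ZZ H$.

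Next I would verify $\mc{M}_H^s(v_0) = \mc{M}_H(v_0)$. Any rank-one subsheaf of a rank-two $\mc{F}$ with $c_1(\mc{F}) = H$ is of the form $\mc{I}_W(aH)$ with $H$-slope $2a \in 2\ZZ$, whereas $\mu_H(\mc{F}) = H^2/2 = 1$; the two slopes can never coincide. This rules out strictly semistable objects both in the $\mu$-sense and in the Gieseker sense, so $H$ is automatically $v_0$-general, and Theorem \ref{thmdefeqhilb} shows that $\mc{M}(v_0)$ is an irreducible symplectic manifold of $K3^{[2]}$-type.

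Finally, the involution $\varphi$ preserves $H = \pi^*\mc{O}_{\PP^2}(1)$ and acts trivially on $H^0(S,\ZZ)$ and $H^4(S,\ZZ)$, so $\varphi^*v_0 = v_0$. Proposition \ref{propinducedauto} applied to $(S,H)$ with this $\varphi$-invariant Mukai vector then produces a biregular automorphism $\iota$ of $\mc{M}^s(v_0) = \mc{M}(v_0)$, which is an involution since $\varphi^2 = \id$. The only point where one might expect a subtle obstacle is the semistability check; here it collapses to the parity observation above.
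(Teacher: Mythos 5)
Your proposal is correct and follows essentially the same route as the paper: establish $\mc{M}(v_0)=\mc{M}^s(v_0)$ using $\NS(S)=\ZZ H$, invoke Theorem \ref{thmdefeqhilb} for the irreducible symplectic structure, and apply Proposition \ref{propinducedauto} to the $\varphi$-invariant vector $v_0=(2,H,0)$ to get the regular involution. The only cosmetic difference is that you rule out strictly semistable sheaves by the odd/even slope comparison directly, whereas the paper observes that $\NS(S)$ contains no classes of negative square and hence the ample cone has no walls; both arguments rest on the same Picard-rank-one hypothesis.
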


\begin{proof}
Recall that the Picard rank of $S$ is one. Thus the ample cone of $S$ consists of one ray and there are no walls. (This follows since there are no elements in $\NS S$ of negative square (cf.\ Definition \cite[Def.\ 4.C.1]{HL})). Thus $\mc{M}(v_0)$ is an irreducible symplectic manifold. By Proposition \ref{propinducedauto} we have an induced regular involution.
\end{proof}

\begin{rem}
Note that in this special case we can deduce the regularity of the involution $\iota$ (as proven in Proposition \ref{propinducedauto}) directly: The invariant lattice of $\mc{M}(v_0)$ coincides with the N\'{e}ron$-$Severi group and thus any ample class is mapped to itself. Hence the birational involution $\iota$ is regular (cf.\ \cite[Cor.\ 3.3]{Fuj}).
\end{rem}

\begin{thm}\label{thmfamily}
There is a $19$-dimensional family of manifolds of $\ta{K3}$-type admitting a non-symplectic involution with invariant lattice isomorphic to $U$. Every member of this family is isomorphic to a moduli space of sheaves $\mc{M}(2,H,0)$ on a polarised $K3$ surface $(X,H)$ admitting a double cover to $\PP^2$. This family is different from the $19$-dimensional family of natural non-symplectic involutions on the Hilbert-schemes of two points.
\end{thm}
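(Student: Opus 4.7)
The plan is to verify the four assertions in turn using the tools developed in Sections 2 and 3. First I would read off the Mukai vector of the extension class $\mc{F}$ in (\ref{eqextension}): since $v(\mc{O}_S) = (1,0,1)$ and $v(\mc{I}_Z(H)) = (1,H,-1)$ (from $\ch(\mc{I}_Z(H)) = 1+H-2$ and $\sqrt{\td_S}=(1,0,1)$), the sum is $v_0 = (2,H,0)$, with $v_0^2 = H^2 = 2$. Combining Proposition \ref{propsmooth} (which established compactness, smoothness and irreducible-symplecticness via the absence of walls) with Theorem \ref{thmdefeqhilb}, one obtains that $\mc{M}(2,H,0)$ is a manifold of $\ta{K3}$-type of dimension four.

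Next I would compute the invariant sublattice $H^2(\mc{M}(v_0),\Z)^\iota$ by Lemma \ref{lemogradyeq}: it is $(v_0^\perp)^\varphi \subset \widetilde{H}(S,\Z)$. The involution $\varphi$ acts trivially on $H^0\oplus H^4$, and under the genericity assumption $\NS(S) = \Z H$ the invariant part of $H^2(S,\Z)$ is precisely $\Z H$ (a non-symplectic involution acts as $-1$ on transcendental classes). Intersecting the rank-three invariant part of the Mukai lattice with $v_0^\perp$ using $w\cdot v_0 = 2b-2c$ for $w=(a,bH,c)$ yields a rank-two lattice spanned, e.g., by $(1,0,0)$ and $(0,H,1)$; a short Gram-matrix calculation (together with a basis change absorbing the diagonal entry) identifies it with the hyperbolic plane $U$, so $\iota$ belongs to conjugacy class No.\ 1 of Theorem \ref{4families}.

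For the 19 parameters I would invoke Remark \ref{reminducedauto}: the moduli of smooth plane sextics modulo $PGL_3$, equivalently the moduli of polarised $K3$ double covers of $\PP^2$, is 19-dimensional, and since the Mukai vector $(2,H,0)$ and the polarisation $H$ are canonically defined fibrewise and fibrewise $\varphi$-invariant, the relative moduli space carries a globally defined involution deforming $\iota$. The dimension count $\dim \mathrm{Def}(X)^\iota = 19$ from Theorem \ref{thmbeau} confirms that this is indeed the full expected 19-dimensional family.

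To distinguish the new family from the natural involutions, I would finally appeal to Theorem \ref{4families}: the natural involutions live in class No.\ 3 with invariant lattice $\langle 2\rangle\oplus\langle -2\rangle$, whereas the present construction lies in class No.\ 1 with invariant lattice $U$. By Proposition \ref{basic}(1), non-conjugate cohomological actions preclude deformation equivalence, so the two families are genuinely distinct. The main obstacle is the correct identification of the invariant sublattice: one must verify that the rank-two lattice $(v_0^\perp)^\varphi$ is unimodular, hence $U$, rather than $U(2)$ or $\langle 2\rangle\oplus\langle -2\rangle$; this is where the Gram-matrix computation and, implicitly, a divisibility check on the $(-2)$-class are required, but once that is in hand the rest of the theorem follows at once from the classification of Section 2.
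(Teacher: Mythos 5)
Your proposal is correct and follows essentially the same route as the paper: compute $v_0=(2,H,0)$, apply Proposition \ref{propsmooth} and Theorem \ref{thmdefeqhilb}, identify $(v_0^\perp)^\varphi$ via Lemma \ref{lemogradyeq} as the unimodular lattice spanned by $(1,0,0)$ and $(0,H,1)$ (hence $U$), globalise over the $19$-dimensional moduli of degree-two $K3$s via Remark \ref{reminducedauto}, and separate the family from the natural involutions by the conjugacy-class invariant. The only point where the paper is more careful is the relative construction: since no universal family exists over the moduli space of polarised $K3$s of degree two, one must pass to a finite cover (following Szendr\H{o}i) and restrict to the open locus where the relative moduli space of sheaves is smooth, but this is a technicality rather than a gap in your argument.
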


\begin{proof}
Let $\mc{K}_1$ denote the moduli space of polarised $K3$ surfaces of degree two. (Every such $K3$ surface is obtained as a smooth double sextic $\pi\colon S\rightarrow\PP^2$ and the polarisation $H$ is given by the pullback $\pi^\star\mc{O}_{\PP^2}(1).$) Note that $\mc{K}_1$ is an irreducible quasi-projective variety. Over $\mc{K}_1$ there does not exist a universal family. Following \cite[Lem. 2.7]{Sze}, there exists a finite cover $\mc{K}'\rightarrow\mc{K}_1$ with $\mc{K}'$ smooth together with a complete family $\psi\colon\mc{X}\rightarrow\mc{K}'$ of degree two $K3$ surfaces. The family $\mc{X}$ comes together with a polarisation $\mc{L}$ such that for all $t\in\mc{K}'$ the restriction $\mc{L}_t$ is just given by $H,$ the pullback of a line. As explained in \cite[Section 6.2]{HL}, we can construct a relative moduli space of sheaves $\rho\colon\mc{M}\rightarrow\mc{K}'$ on the fibres of $\psi$ such that for every point $t\in\mc{K}'$ the fibre $\mc{M}_t:=\rho^{-1}(t)$ is isomorphic to the moduli space $\mc{M}(2,c_1(\mc{L}_t),0)$ of sheaves on the surface $\mc{X}_t$. If $t$ corresponds to a surface of Picard rank one, we have seen that the moduli space is smooth. Moreover, the set of points $t,$ where $\rho$ is smooth, is open. Thus we find a Zariski dense open subset $\mc{K}^\circ\subseteq\mc{K}'$ such that the restricted family $\mc{M}^\circ$ constitues a $19$-dimensional family of irreducible symplectic manifolds.

The family $\mc{X}$ certainly carries a non-symplectic involution, which preserves the polarisation $\mc{L}.$ By Proposition \ref{propinducedauto} and Remark \ref{reminducedauto} we see that we have a biregular induced involution on the relative moduli space $\mc{M}^\circ\rightarrow \mc{K}^\circ$ acting fibrewise. If $t\in\mc{K}'$ is a point corresponding to a surface $\mc{X}_t$ of Picard rank one, then the involution on $\mc{M}_t$ is exactly the involution $\iota$ discussed above. From Lemma \ref{lemogradyeq} we immediately deduce that $\iota$ is non-symplectic. The Mukai vector of a sheaf $\mc{F}$ in an extension (\ref{eqextension}) is $v_0=v(\mc{F})=(2,H,0)$, so indeed its length is $2$ and the invariant lattice is generated by $(1,0,0)$ and $(0,H,1)$. Thus it is isomorphic to $U$.\end{proof}

\begin{rem} The construction (\ref{eqextension}) of $\mc{F}$ is a so-called Serre-construction, a correspondence between codimension two subschemes and rank two vector bundles as a generalisation to the correspondence between divisors and line bundles. The locus of degeneration of global sections of $\mc{F}$ define codimension two subschemes of $S$, the defining section $\alpha$ in (\ref{eqextension}) corresponds to the subscheme $Z$ itself.\end{rem}

\begin{rem}\label{impossibility}
We remark that the general involutions in the family No.\ $2$ and $4$ of Theorem \ref{4families}
cannot be realized as moduli spaces of sheaves whose involutions are induced 
from the surfaces. 

In fact, the unique $19$-dimensional family of $K3$ surfaces with involutions are double covers
$S\rightarrow \mathbb{P}^2$ branched along smooth sextics. If $S$ (or the sextic) is very general,
we have $\NS (S)= \Z H\simeq \langle 2 \rangle$, with $H$ the pullback of $\mathcal{O}(1)$. 
Suppose that for some Mukai vector $v$ on $S$, the irreducible symplectic manifold
$X=\mathcal{M}_H (v)$ had $H^2(X,\Z)^{\sigma}\simeq U(2)$. By O'Grady's isomorphism, we have 
$\NS (X)=H^2(X,\Z)^{\sigma}\simeq U(2)$ is given by the orthogonal complement 
of $v$ inside $H^0(S,\Z) \oplus \Z H \oplus H^4(S,\Z) \simeq U\oplus \langle 2 \rangle$. 
Thus we get the overlattice structure $\NS (X)\oplus \Z v\simeq U(2)\oplus \langle 2 \rangle
\subset U\oplus \langle 2 \rangle$. 
But this is not possible since the only isotropic subgroups in the discriminant group of $U(2)\oplus 
\langle 2 \rangle$ corresponds to $U(2)\subset U$, which contradicts the primitivity of $\NS (X)\simeq U(2)$. 

In the case of No.\ $4$ it suffices to show
that if we had on $S$ some Mukai vector $v$ such that the irreducible symplectic manifold $X$ had $\NS (X)\simeq 
\langle 2 \rangle \oplus \langle -2 \rangle$, then the length $-2$ generator $g\in \NS (X)$ has 
divisor $2$ in $\Lambda$. 
We have the overlattice structure $\NS (X)\oplus \Z v\subset U
\oplus \langle 2 \rangle.$ By looking at the discriminant forms, we see that $g$ 
satisfies $(g+v)/2\in H^*(S, \Z)$. It follows that $g/2\in (v^{\perp})^{*}=\Lambda^{*}$. 
This shows that $g$ has 
divisor $2$ inside $\Lambda=v^{\perp}$.
\end{rem}

\section{The Fixed Locus}
We continue with a few sheaf-theoretic considerations in order to understand the fixed locus $F$ of the involution $\iota$ on the moduli space $\mc{M}(v_0).$ %We denote by $\PP^{2\vee}$ the projective space of lines in $\PP^2$ which (by pullback) is isomorphic to $|\mc{O}(H)|.$

\begin{lem}\label{lemadddim}
We have $h^0(\mc{F})=2.$
\end{lem}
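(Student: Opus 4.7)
The plan is to read off $h^0(\mc{F})$ from the long exact sequence of cohomology attached to the defining extension
\[0\rightarrow \mc{O}_S\xrightarrow{\alpha} \mc{F} \rightarrow \mc{I}_Z(H)\rightarrow 0,\]
using what we already know about each of the outer terms. First I would write down the relevant piece of the long exact sequence, namely
\[0\rightarrow H^0(\mc{O}_S)\rightarrow H^0(\mc{F})\rightarrow H^0(\mc{I}_Z(H))\rightarrow H^1(\mc{O}_S),\]
so that it suffices to compute the three remaining dimensions.

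Next I would dispose of the contributions from $\mc{O}_S$: since $S$ is a $K3$ surface we have $h^0(\mc{O}_S)=1$ and $h^1(\mc{O}_S)=0$, so the sequence immediately yields $h^0(\mc{F})=1+h^0(\mc{I}_Z(H))$. It remains to show $h^0(\mc{I}_Z(H))=1$. By hypothesis $Z$ is contained in the curve $D_Z\in |\mc{O}(H)|$, which produces a nonzero section of $\mc{I}_Z(H)$, so $h^0(\mc{I}_Z(H))\geq 1$. For the upper bound I would invoke the previous lemma (more precisely its proof): the uniqueness of $D_Z$ established there shows that no two independent sections of $\mc{I}_Z(H)$ can exist, since two such sections would cut out two distinct members of $|\mc{O}(H)|$ containing $Z$, contradicting $H^2=2<\mathrm{length}(Z)$ together with the irreducibility and reducedness of all curves in $|\mc{O}(H)|$. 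Hence $h^0(\mc{I}_Z(H))=1$.

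Combining gives $h^0(\mc{F})=1+1=2$, as required. There is essentially no obstacle here: the result is a direct bookkeeping consequence of the extension sequence together with the already-proved uniqueness of $D_Z$ and the standard cohomology of the structure sheaf of a $K3$ surface.
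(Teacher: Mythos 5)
Your proof is correct and takes essentially the same route as the paper: the paper's one-line argument is precisely that the long exact sequence of the defining extension, together with $h^0(\mc{O}_S)=1$, $h^1(\mc{O}_S)=0$ and $H^0(\mc{I}_Z(H))=\C s$, gives $h^0(\mc{F})=2$. Your justification of $h^0(\mc{I}_Z(H))=1$ via the uniqueness of $D_Z$ (from $H^2=2<\mathrm{length}(Z)$ and the irreducibility of members of $|\mc{O}(H)|$) is exactly the content already established in the preceding lemma.
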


\begin{proof} This follows easily from (\ref{eqextension}) since $\cH^0(\mc{I}_Z(H))=\CC s$.\end{proof}

\begin{lem}\label{leminvnewexequiv}
Let $Z$ and $Z'$ be two length three subschemes lying on curves $D_Z$ and $D_{Z'},$ respectively. They define isomorphic sheaves $\mc{F}$ if and only if $D_Z=D_{Z'}$ and $\mc{O}_{D_Z}(-Z)\simeq\mc{O}_{D_{Z'}}(-Z').$
\end{lem}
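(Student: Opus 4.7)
The strategy is to show that the isomorphism class of $\mc{F}$ on $S$ encodes, and is encoded by, the pair $(D_Z, \mc{O}_{D_Z}(-Z))$.

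I would first recover the curve $D_Z$ from $\mc{F}$. By Lemma \ref{lemadddim} the wedge product gives a canonical non-zero morphism $\wedge^2 H^0(\mc{F}) \otimes \mc{O}_S \to \det\mc{F} \simeq \mc{O}_S(H)$, and the zero locus of the resulting section is precisely $D_Z$. In particular, for any non-zero $\alpha \in H^0(\mc{F})$ with zero scheme $Z_\alpha$, the wedge $\alpha \wedge \alpha_1 \in H^0(\mc{O}_S(H))$ vanishes on $Z_\alpha$, so $Z_\alpha \subset D_Z$ scheme-theoretically. Since $\NS(S) = \ZZ H$ and $\mc{F}$ is stable, no non-zero section of $\mc{F}$ can vanish along a divisor, hence $Z_\alpha$ is isolated of length $c_2(\mc{F}) = 3$.

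Next, I would examine the morphism
\[
\phi : \mathbb{P}(H^0(\mc{F})) \longrightarrow \Hilb^3(D_Z),\qquad [\alpha]\mapsto Z_\alpha.
\]
Composing with the Abel--Jacobi map $\Hilb^3(D_Z) \to \Pic^3(D_Z)$ gives a morphism $\mathbb{P}^1 \to \Pic^3(D_Z)$; since the target is a torsor over the Jacobian of the genus-$2$ curve $D_Z$ (an abelian variety), this composite must be constant. Hence every $Z_\alpha$ lies in $|\mc{O}_{D_Z}(Z)|$. Being non-constant (for a second section $\alpha_2 \notin \CC\cdot\alpha_1$, the zero scheme $Z_{\alpha_2}$ differs generically from $Z = Z_{\alpha_1}$) and taking values in $|\mc{O}_{D_Z}(Z)| \simeq \mathbb{P}^1$, $\phi$ surjects onto the whole linear system. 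In particular, $\mc{F}$ determines the pair $(D_Z,\mc{O}_{D_Z}(Z))$, equivalently $(D_Z,\mc{O}_{D_Z}(-Z))$.

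For the converse, assume $D_Z = D_{Z'} =: D$ and $\mc{O}_D(-Z) \simeq \mc{O}_D(-Z')$. Then $Z' \in |\mc{O}_D(Z)|$ coincides with $Z_{\alpha'}$ for some non-zero $\alpha' \in H^0(\mc{F}_Z)$, and quotienting by $\mc{O}_S\cdot\alpha'$ yields a non-split short exact sequence $0 \to \mc{O}_S \to \mc{F}_Z \to \mc{I}_{Z'}(H) \to 0$. Since $\Ext^1(\mc{I}_{Z'}(H), \mc{O}_S) \simeq H^1(\mc{I}_{Z'}(H))^{\vee} \simeq \CC$ by Serre duality on the K3 surface $S$, this non-trivial extension is equivalent to the one defining $\mc{F}_{Z'}$, and therefore $\mc{F}_Z \simeq \mc{F}_{Z'}$.

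The main obstacle I foresee lies in handling singular members $D_Z \in |H|$ (nodal or cuspidal, by the classification in Section~4): one must replace $\Pic^3(D_Z)$ by a compactified Picard scheme and read $\mc{O}_{D_Z}(-Z)$ as the rank-one torsion-free sheaf $\mc{I}_{Z,D_Z}$. Since $D_Z$ is irreducible, the compactified Picard contains the generalised Jacobian --- an extension of an abelian variety by a commutative affine group --- as a dense open subset, so any morphism from $\mathbb{P}^1$ factors to a constant, and the Abel--Jacobi constancy argument carries over mutatis mutandis.
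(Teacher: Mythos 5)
Your overall strategy is genuinely different from the paper's: you try to read off the pair $(D_Z,\mc{O}_{D_Z}(Z))$ intrinsically from $\mc{F}$ (via the determinant of the evaluation map and an Abel--Jacobi constancy argument on $\PP(H^0(\mc{F}))\simeq\PP^1$), whereas the paper never leaves the category of sheaves on $S$: it fits the two presentations $0\to\mc{O}_S\to\mc{F}\to\mc{I}_Z(H)\to 0$ and $0\to\mc{O}_S\to\mc{F}\to\mc{I}_{Z'}(H)\to 0$ into a single $3\times3$ diagram, observes that the common quotient $\mc{F}/(\im\alpha+\im\alpha')$ is simultaneously $\mc{O}_{D_Z}(H|_{D_Z}-Z)$ and $\mc{O}_{D_{Z'}}(H|_{D_{Z'}}-Z')$, and for the converse reassembles $\mc{F}$ from $\mc{Q}\simeq\mc{Q}'$ using the uniqueness of the non-trivial extension. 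That diagram chase is insensitive to whether $D_Z$ is smooth and to whether $\mc{F}$ is locally free, which is exactly where your argument develops a gap.

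The gap is in your final paragraph. For a singular member $D\in|\mc{O}(H)|$ it is \emph{not} true that every morphism $\PP^1\to\overline{\Pic}^3(D)$ is constant: only the generalised Jacobian (extension of an abelian variety by $\mathbb{G}_m$'s) is rational-curve-free, while the compactified Jacobian of an integral nodal curve is obtained by compactifying the $\mathbb{G}_m$-directions and gluing, and therefore contains complete rational curves passing through the open part. (This is not a hypothetical worry: the paper's Remark 5.5/Theorem 5.6 explicitly analyse limits of divisors inside these compactified Jacobians, and the fibres of the Abel map $\tbmc{U}\to\mc{M}(v')$ are exactly the $\PP^1$'s whose constancy in $\overline{\Pic}$ you would need to rule out.) So for singular $D_Z$ your key step --- that all $Z_\alpha$, $[\alpha]\in\PP(H^0(\mc{F}))$, are "linearly equivalent" --- is unproved; "mutatis mutandis" does not carry it over. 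A secondary issue: the extension $\mc{F}$ need not be locally free (the Cayley--Bacharach condition for $(Z,H)$ fails e.g.\ when $Z$ contains a fibre $\{x,\varphi(x)\}$ of $\pi$, since then a length-two subscheme of $Z$ moves in a pencil of curves in $|\mc{O}(H)|$), so the Koszul-complex identification of $Z_\alpha$ with a length-$c_2$ zero scheme, and the inclusion $Z_\alpha\subset D_Z$ via $\alpha\wedge\beta$, require extra care for such $\mc{F}$. Your converse direction is essentially sound (and close to the paper's), but it invokes the surjectivity of $\phi$ onto $|\mc{O}_{D}(Z)|$, which again rests on the constancy statement above. To repair the proof along your lines you would have to treat the boundary of the compactified Jacobian directly; the paper's extension-theoretic argument is the cleaner route.
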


\begin{proof} For the "if" direction note that $Z$ and $Z'$ yield exact sequences $0\rightarrow \mc{O}_S\xrightarrow{s}\mc{I}_Z(H)\rightarrow\mc{Q}\rightarrow 0$ and $0\rightarrow \mc{O}_S\xrightarrow{s'}\mc{I}_{Z'}(H)\rightarrow\mc{Q}'\rightarrow 0.$ The quotients $\mc{Q}$ and $\mc{Q'}$ are isomorphic to $\mc{O}_{D_Z}(H|_{D_Z}-Z)$ and $\mc{O}_{D_{Z'}}(H|_{D_{Z'}}-Z'),$ respectively. Thus by assumption $\mc{Q}\simeq\mc{Q}'.$ We get a diagram
\[\xymatrix{ & & 0\ar[d]&0\ar[d]&\\
&&\mc{O}_S\ar[d]^{\alpha'}\ar@{=}[r]&\mc{O}_S\ar[d]^s&\\
0\ar[r]&\mc{O}_S\ar[r]^\alpha\ar@{=}[d]&\mc{F}\ar[r]\ar[d]&\mc{I}_Z(H)\ar[r]\ar[d]&0\\
0\ar[r]&\mc{O}_S\ar[r]^{s'}&\mc{I}_{Z'}(H)\ar[r]\ar[d]&\mc{Q}\ar[r]\ar[d]&0\\
&&0&0&
}\]
with some sheaf $\mc{F}.$ Using the uniqueness of extensions of the form (\ref{eqextension}) we conclude. Conversely, starting with a sheaf $\mc{F}$ and exact sequences $0\rightarrow \mc{O}_S\xrightarrow{\alpha}\mc{F}\rightarrow \mc{I}_Z(H)\rightarrow 0$ and $0\rightarrow \mc{O}_S\xrightarrow{\alpha'}\mc{F}\rightarrow \mc{I}_{Z'}(H)\rightarrow 0,$ we immediately deduce from the diagram above that the quotient $\mc{Q}$ is $\mc{O}_{D_Z}(H|_{D_Z}-Z)\simeq\mc{O}_{D_{Z'}}(H|_{D_{Z'}}-Z')$.\end{proof}

Let $D\in|\mc{O}(H)|$ be a smooth curve and $Z\subset D$ a length three subscheme. We consider the degree three line bundle $\mc{O}_D(Z).$ Using Grothendieck$-$Riemann$-$Roch we can easily compute its Mukai vector:
\[v':=v(\mc{O}_D(Z))=(0,H,2).\]

\begin{lem}\label{leminvfixiso}
We have $\mc{M}(v_0)\cong \mc{M}(v').$ 
\end{lem}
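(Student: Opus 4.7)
My plan is to exhibit both $\mc{M}(v_0)$ and $\mc{M}(v')$ as quotients of a common incidence variety by the same equivalence relation. Let $\mathcal{C}\to |H|\cong \PP^2$ denote the universal curve in the linear system $|H|$ and $W:=\Hilb^3(\mathcal{C}/|H|)$ its relative Hilbert scheme of length-three subschemes, an irreducible quasi-projective fivefold. Two natural morphisms start from $W$: the relative Serre-construction map $\beta\colon W\to \mc{M}(v_0)$, obtained by globalising the extension (\ref{eqextension}); and the relative Abel-Jacobi map $\alpha\colon W\to \mc{M}(v')$, sending $(Z,D)$ to the class of $\mc{O}_D(Z)$ (to be interpreted as a rank-one torsion-free sheaf on $D$ when $D$ is singular). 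Every smooth $D\in|H|$ has genus $2$ by adjunction, so $\deg\mc{O}_D(Z)=3>2g(D)-2$ and Riemann-Roch forces $h^0(\mc{O}_D(Z))=2$; over the smooth locus of $|H|$, $\alpha$ is therefore a $\PP^1$-bundle whose fibres are the linear systems $|\mc{O}_D(Z)|$.

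Lemma~\ref{leminvnewexequiv} asserts precisely that $\beta(Z,D)=\beta(Z',D')$ if and only if $D=D'$ and $\mc{O}_D(Z)\simeq \mc{O}_{D'}(Z')$, which is exactly the condition $\alpha(Z,D)=\alpha(Z',D')$. Hence $\alpha$ and $\beta$ cut out the same equivalence relation on $W$ and descend to a unique bijection $\Phi\colon \mc{M}(v_0)\to \mc{M}(v')$ satisfying $\Phi\circ\beta=\alpha$. Since $\mc{M}(v_0)$ and $\mc{M}(v')$ are both smooth irreducible fourfolds (Proposition~\ref{propsmooth} and Theorem~\ref{thmdefeqhilb}), and $\Phi$ is regular on the preimage of the smooth locus of $|H|$ by the categorical-quotient description, Zariski's main theorem upgrades this set-theoretic bijection to an isomorphism of varieties.

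The main obstacle will be a consistent treatment of the degenerate fibres of $\mathcal{C}\to |H|$: on the discriminant locus in $|H|\cong \PP^2$, where the fibre acquires a node or a cusp (as in the case analysis in the proof of Theorem~\ref{thmfamily}), the line bundle $\mc{O}_D(Z)$ has to be replaced by a rank-one torsion-free sheaf, and one must verify that $\alpha$ and $\beta$ extend regularly over these fibres and continue to cut out the same equivalence relation. A cleaner route that sidesteps this case analysis is to realise $\Phi$ as a Fourier-Mukai transform: the composition of the spherical twist $T_{\mc{O}_S}$---which by the snake-lemma step in the proof of Lemma~\ref{leminvnewexequiv} sends $\mc{F}$ to the cokernel $\mc{O}_{D_Z}(H|_{D_Z}-Z)$ of the evaluation map $\mc{O}_S^{\oplus 2}\hookrightarrow \mc{F}$ (well-defined in cohomological degree zero by Lemma~\ref{lemadddim}, Hirzebruch-Riemann-Roch, and the stability of $\mc{F}$)---with the twist $-\otimes \mc{O}_S(2H)$ is a covariant autoequivalence of $D^b(S)$ whose action on Mukai vectors is $v_0=(2,H,0)\mapsto (0,H,-2)\mapsto (0,H,2)=v'$, and the descent argument of Proposition~\ref{propinducedauto} then produces the induced morphism of moduli spaces.
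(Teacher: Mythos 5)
Your second route is, in substance, the paper's own proof: the explicit map in the paper (attributed to M.\ Lehn) sends $\mc{F}$ to the cokernel $\mc{Q}$ of the evaluation map $\mc{O}_S\otimes \cH^0(S,\mc{F})\to\mc{F}$, which is exactly the sheaf-level incarnation of your spherical twist $T_{\mc{O}_S}$, and bijectivity is then read off from Lemma \ref{leminvnewexequiv}, just as you do. One discrepancy to flag: the paper finishes by \emph{dualising} $\mc{Q}\simeq\mc{O}_D(H|_D-Z)$ and twisting by $\mc{O}_D(H|_D)$, landing on $\mc{O}_D(Z)$, whereas your composition $(-\otimes\mc{O}_S(2H))\circ T_{\mc{O}_S}$ lands on $\mc{O}_D(3H|_D-Z)$. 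Both classes have Mukai vector $(0,H,2)$ and both recipes give isomorphisms, but they differ by a nontrivial involution of $\mc{M}(v')$, so your Fourier--Mukai map is \emph{not} the Abel map $\alpha$ of your first route; if you want the two routes (and the equivariance statements exploited later in Remark \ref{reminvfix}) to match, you must insert the dual, as the paper does. Your first route --- presenting both spaces as the same quotient of the relative Hilbert scheme --- is a genuinely different and more global packaging, whose advantage is that it makes the Lagrangian fibration structure visible from the start.

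Concerning gaps: in the first route, the step ``Zariski's main theorem upgrades this set-theoretic bijection to an isomorphism'' requires $\Phi$ to already be a \emph{morphism} on all of $\mc{M}(v_0)$, and you only establish this over $|\mc{O}(H)|\setminus C^\star$; a map between irreducible symplectic fourfolds that is a regular bijection on a dense open subset need not be biregular (Mukai flops), so the extension over the discriminant --- which you correctly identify but defer --- is genuinely needed and is exactly where rank-one torsion-free sheaves on the nodal and cuspidal curves enter. In the second route the remaining work is the fibrewise verification that the transform stays in the heart and preserves stability: one needs $h^0(\mc{F})=2$, $h^1=h^2=0$ for \emph{every} $[\mc{F}]\in\mc{M}(v_0)$, not only for those arising from the Serre construction (this follows from $\chi(\mc{F})=2$, stability, and the uniqueness of the curve through $Z$), injectivity of the evaluation map (its image cannot have rank one, again by stability), purity and stability of the cokernel $\mc{I}_{Z/D}(H)$ as a sheaf on the integral curve $D$, and the analogous statements for the inverse transform. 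These are all true and standard, but Proposition \ref{propinducedauto} as stated only covers pullback along automorphisms of the surface, so citing ``the descent argument'' leaves precisely these checks to the reader. To be fair, the paper's own proof is equally terse on these points.
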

\begin{proof}
Note that by the same argument as in the proof of Proposition \ref{propsmooth} the moduli space $\mc{M}(v')$ is an irreducible symplectic fourfold. We can make the isomorphism explicit (this was pointed out by M.\ Lehn): For a sheaf $\mc{F}\in\mc{M}(v_0)$ we consider the following short exact sequence:
\[0\rightarrow \mc{O}_S\otimes \cH^0(S,\mc{F})\xrightarrow{\mathrm{ev}} \mc{F}\rightarrow\mc{Q}\rightarrow0.\]
By Lemma \ref{lemadddim} the quotient $\mc{Q}$ is a torsion sheaf supported on a curve $D\in|\mc{O}(H)|,$ which coincides with $\mc{Q}\simeq\mc{O}_D(H|_D-Z)$ from the proof of Lemma \ref{leminvnewexequiv} above, for some choice of a length three subscheme $Z\subset D$ defined  by a section of $\mc{F}.$ Dualising $\mc{Q}$ and then tensoring with $\mc{O}_D(H|_D)$ gives the corresponding point in $\mc{M}(v')$. By Lemma \ref{leminvnewexequiv} this is an isomorphism.
\end{proof}

\begin{rem}\label{reminvfix}
The moduli space $\mc{M}(v')$ can be regarded as a relative compatified Jacobian: Denote by $\mc{U}\rightarrow|\mc{O}(H)|$ the universal family of curves in $|\mc{O}(H)|$ (cf.\ \cite[Sect.\ 3.1]{HL}). Inside $\tb{S}$ we have the relative Hilbert scheme $\tbmc{U}\rightarrow|\mc{O}(H)|$ of subschemes $Z\in\tb{S}$ lying on a curve $D_Z\in|\mc{O}(H)|.$ There is a relative compactified Abel map
\[f\colon\tbmc{U}\rightarrow \mc{M}(v')\]
over $|\mc{O}(H)|$ with generic fibre a projective line, which --- at least over points corresponding to smooth curves --- is given by the assignment
\[\tbmc{U}\ni Z\mapsto \mc{O}_{D_Z}(Z).\]
If $D$ is a smooth curve, the fibre over $D$ of the map
\[g\colon \mc{M}(v')\rightarrow|(\mc{O}(H)|,\quad \mc{O}_D(Z)\mapsto D\]
is precisely the Jacobian $\mc{J}^3D$ of degree three line bundles on $D.$

By Lemma \ref{lemadddim} each sheaf $\mc{F}$ defines a one dimensional family of length three subschemes in $X,$ which exactly corresponds to the fibre of $f$ over $\mc{F},$ where we regard $\mc{F}$ as a point in $\mc{M}(v')$ via the isomorphism $\mc{M}(v_0)\cong \mc{M}(v')$ of Lemma \ref{leminvfixiso}.

Certainly $\tbmc{U}$ is invariant under the induced involution $\tb{\varphi}$ on $\tb{S}$ and $f$ is equivariant with respect to $\tb{\varphi}$ and the action $\iota$ on $\mc{M}(v_0)\cong \mc{M}(v').$ Hence $g\colon\mc{M}(v')\rightarrow|\mc{O}(H)|$ is a $\iota$-invariant Lagrangian fibration. 
\end{rem}

\begin{thm}\label{connected}
The fixed locus of $(\mc{M}(v_0),\iota)$ consists of two smooth connected surfaces $F_1$ and $F_2$  which are both branched coverings of $\PP^2$ of degree six and ten, respectively.
\end{thm}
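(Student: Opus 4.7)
The plan is to work via the Lagrangian fibration $g\colon \mc{M}(v')\to |\mc{O}(H)|\simeq \PP^2$ from Remark \ref{reminvfix}, transported to $\mc{M}(v_0)$ through the isomorphism of Lemma \ref{leminvfixiso}. The key observation is that every curve $D\in |\mc{O}(H)|$ is the preimage $\pi^{-1}(l)$ of a line $l\subset \PP^2$, hence automatically $\varphi$-invariant; consequently $\iota$ preserves each fiber of $g$ and acts trivially on the base, so the fixed locus $F$ can be analysed fiberwise over $\PP^2$.

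Over the open locus parametrizing lines $l$ transverse to the branch sextic $C$, the fiber $D=\pi^{-1}(l)$ is a smooth genus two curve, $\sigma:=\varphi|_D$ is its hyperelliptic involution and $g^{-1}(l)=\mathrm{Jac}^3(D)$. A direct computation using $\sigma(p)+p\sim K_D$ yields $\sigma^{\ast}L = K_D^{\otimes 3}\otimes L^{-1}$, so the $\iota$-fixed points over $l$ are exactly the $L$ with $L^{\otimes 2}\simeq K_D^{\otimes 3}$. Substituting $M:=L\otimes K_D^{-1}$, these are in canonical bijection with the $2^{2g}=16$ theta characteristics of $D$, which split by parity $h^0(M)\bmod 2$ into exactly $6$ odd ones (the $\mc{O}_D(W_i)$ for the Weierstrass points $W_1,\dots,W_6$ lying over $l\cap C$) and $10$ even ones (the classes $\mc{O}_D(W_i+W_j+W_k)\otimes K_D^{-1}$ for $3$-subsets of $\{1,\dots,6\}$ modulo complementation).

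By Beauville's theorem (Theorem \ref{thmbeau}) the fixed locus $F$ is a smooth Lagrangian submanifold, so the locally constant parity function extends from the open part of $F$ lying over $\PP^2\setminus C^{\ast}$ (where $C^{\ast}$ is the dual sextic) to all of $F$. Hence $F=F_1\sqcup F_2$ splits into the odd part (a generically degree $6$ cover of $\PP^2$) and the even part (a generically degree $10$ cover). For connectedness of $F_1$ I will identify it with the incidence variety
\[I_C := \{(l,p)\in \PP^2\times C\mid p\in l\}\]
via the map sending a fixed sheaf $L=\mc{O}_D(W_i)\otimes K_D$ over $l$ to $(l,\pi(W_i))$; projecting $I_C$ to $C$ exhibits it as a $\PP^1$-bundle over the smooth connected genus ten curve $C$, so $F_1$ is smooth and connected. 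For $F_2$, since its diffeomorphism type is a deformation invariant (Proposition \ref{basic}), connectedness may be checked after specializing to a general sextic $C$; then the monodromy of the six points $l\cap C$ as $l$ varies in $\PP^2\setminus C^{\ast}$ realizes the full symmetric group $S_6$, which acts transitively on unordered $3$-subsets of $\{1,\dots,6\}$ modulo complementation, so $F_2$ is connected.

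The main obstacle I anticipate is the behaviour over $C^{\ast}$, where $D$ acquires nodes or cusps (depending on whether $l$ is a simple tangent, a bitangent or an inflectional tangent of $C$) and $\mathrm{Jac}^3(D)$ is replaced by the compactified Jacobian appearing as the fiber of $g$. One must verify that the parity stratification extends continuously over these loci so that $F_1$ and $F_2$ remain closed of the expected generic degrees, and that the identification $F_1\simeq I_C$ is biregular globally rather than merely birational. Smoothness of $F$ from Beauville's theorem will reduce these compatibility statements to local analyses at the three types of non-transverse lines, which form the bulk of the remaining technical work.
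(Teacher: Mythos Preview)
Your approach follows the same architecture as the paper's: both transport to $\mc{M}(v')$ via Lemma~\ref{leminvfixiso}, analyse the fixed locus fiberwise over the Lagrangian fibration $g$, find $16$ fixed points in each smooth fiber splitting as $6+10$, invoke smoothness of $F$ (Theorem~\ref{thmbeau}) to separate the two pieces, and establish connectedness by what is in essence a monodromy argument.

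The differences lie in the choice of separating invariant and in the packaging of connectedness. For separation, the paper uses the ad hoc function $r(\mc{L}):=\dim H^0(D,\mc{L})^{\iota_D}$, which equals $2$ on the six classes $\mc{O}_D(3p_i)$ and $1$ on the ten classes $\mc{O}_D(p_i+p_j+p_k)$ with distinct $i,j,k$; your theta-characteristic parity is equivalent (the twist $M=L\otimes K_D^{-1}$ is $\mc{O}_D(p_i)$ with $h^0=1$ in the first case and non-effective of degree one, $h^0=0$, in the second) but more conceptual and comes with a built-in deformation invariance. For connectedness, the paper exhibits one explicit degeneration to a nodal curve in which two Weierstrass points collide, thereby producing a transposition in the monodromy; your direct appeal to the full $S_6$ monodromy of $l\cap C$ is the same idea stated globally, and in fact holds for every smooth plane sextic, so the detour through Proposition~\ref{basic} is unnecessary. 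Your identification of $F_1$ with the incidence variety $I_C$ is a genuine addition not present in the paper; it would give a concrete geometric model of $F_1$, but as you rightly flag, making it biregular over $C^{\ast}$ needs further argument, and for connectedness of $F_1$ alone the monodromy already suffices. Both the paper and you are equally informal about what happens over $C^{\ast}$; you are simply more explicit in naming this as the residual technical work.
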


\begin{proof} 
We use the isomorphism $\mc{M}(v_0)\cong \mc{M}(v')$ of Lemma \ref{leminvfixiso}. By Remark \ref{reminvfix} we see that we have a $\iota$-invariant fibration $g\colon\mc{M}(v_0)\rightarrow|\mc{O}(H)|,$ where the action on the base is, of course, trivial. Therefore $\varphi$ acts on every fibre. A general point $D\in|\mc{O}(H)|$ corresponds to a smooth genus two curve which is a double cover of a line ramified at six points $p_1,\dots,p_6$ which are exactly the points in $D\cap C.$ Also they are the fixed points of the hyperelliptic involution $\iota_D$ on $D$. The fibre $g^{-1}(D)$ can be identified with the Jacobian $\mc{J}^3D$ and the involution on $\mc{J}^3D$ is given by pulling back divisors along $\iota_D.$ There are exactly $16$ fixed points in $\mc{J}^3D$ which are all of the form $p_i+p_j+p_k$ for some $i,j,k.$ We divide the set of fixed points into two sets. The first consists of the six classes of divisors of the form $3p_i$ for some $i.$ The second set consists of classes of divisors of the form $p_i+p_j+p_k$ with $i,j,k$ distinct. Now let $C^\star\subset |\mc{O}(H)|$ denote the locus of tangent lines to $C,$ i.e.\ the dual curve. As the curve $D$ moves in the open set $|\mc{O}(H)|\setminus C^\star$ the fixed points deform with it in the obvious way and it is impossible to deform a divisor of the first kind into one of the second kind. In this way we obtain two surfaces $\tilde{F}_1$ and $\tilde{F}_2$ which are unramified coverings of $|\mc{O}(H)|\setminus C^\star$ of degree six and ten.

What is left to do, is to show that the closures $F_1$ and $F_2$ of $\tilde{F}_1$ and $\tilde{F}_2,$ respectively, do not intersect and are both connected. (Here we want to thank Manfred Lehn for pointing out an error in an earlier version of this theorem and for indicating the beautiful proof of the revised statement.) For the first assertion we define a function on the set of degree three line bundles $\mc{L}$ on curves $D\in|\mc{O}(H)|\setminus C^\star$ as follows: For any such curve $D$ and any line bundle $\mc{L}$ on $D$ we consider the space of global sections $\cH^0(D,\mc{L}).$ The hyperelliptic involution $\iota_D$ acts by pullback on this vector space. We set $r(\mc{L}):=\dim \cH^0(D,\mc{L})^{\iota_D}.$ For a line bundle $\mc{L}_1$ corresponding to a divisor $3p_i$ we have $r(\mc{L}_1)=2$: Every divisor in the linear system $|\mc{L}_1|$ is of the form $p_i + p+\iota_D(p)$ for some $p\in D.$ This gives a two-dimensional space of sections which are all $\iota_D$-invariant. On the other hand, for a line bundle $\mc{L}_2$ associated with a divisor $p_i+p_j+p_k$ with $i,j,k$ distinct, the only fixed divisors in this linear system are $p_i+p_j+p_k$ and $p_l+p_m+p_n$ with $\{i,j,k,l,m,n\}=\{1,\dots,6\}.$ Thus $r(\mc{L}_2)=1.$ This analysis shows that the function $r$ takes different values on open parts of the surfaces $F_1$ and $F_2.$ Since the fixed locus is smooth, the two surfaces cannot intersect.

In order to show that $F_1$ and $F_2$ are connected, it is certainly enough to show that we can deform a divisor of the form $p_1+p_j+p_k$ on a smooth genus two curve $D$ into the divisor $p_2+p_j+p_k.$ We consider the situation that $D$ specialises to a nodal elliptic curve, where the points $p_1$ and $p_2$ come together. (This is the double cover picture of a general line in $\PP^2$ specialising to a tangent.) Denote the node by $q.$ The limits of the divisors $p_1$ and $p_2$ in the compactified Jacobian are both given by the divisor $q.$\end{proof}

\begin{rem}
In order to proof that the surfaces $F_1$ and $F_2$ do not intersect one can also proceed as follows: We want to show that we cannot deform a divisor of the form $2p_1$ to the divisor $p_1+p_2.$ Again, we can look at the limits of these divisors in the compactified Jacobian of the singular curve, where $p_1$ and $p_2$ come together to form a node $q.$ The support of both limits consists of the node $q,$ but the scheme structure is different. The limit of the divisor $p_1+p_2$ is a length two subscheme supported at $q$ consisting of the point $q$ together with a \em horizontal \em tangent vector. (Here we think of all curves being branched over a horizontal $\PP^1.$) On the other hand, the divisor $2p_1$ is a fibre of the two-to-one map to $\PP^1.$ Thus the limit point has to be a fibre, too. Indeed, the limit is a length two subscheme consisting of the node $q$ together with a \em vertical \em vector. This length two subscheme collapses when mapped to $\PP^1.$\end{rem}

\addcontentsline{toc}{section}{Bibliography}


\begin{thebibliography}{einszw}
%\bibitem[AN]{AN}
%	{V. Alexeev, V.V. Nikulin},
%	{\it Del Pezzo and $K3$ surfaces, \it} 
%	{MSJ Memoirs, Vol. \textbf{15}, Mathematical Society of Japan, Tokyo, 2006.}

\bibitem[Beau]{Beau} A. Beauville, 
      \em Some remarks on K\"ahler manifolds with $c_1 = 0$, \em
      Classification of algebraic and analytic manifolds, Prog. Math. \text{39} (1983) 1-26. 

\bibitem[Beau2]{Beau2}
	{A. Beauville},
	{\it Antisymplectic involutions of holomorphic symplectic manifolds, \it}
	{J. Topol.} \textbf{4} (2011), no.\ 2, 300-304.

\bibitem[BHPV]{BHPV}
	{W. Barth, K. Hulek, C. Peters and A. Van de Ven},
	{\it Compact Complex Surfaces (Second Enlarged edition),\it}
	{Erg. der Math. und ihrer Grenzgebiete, 3. Folge, Band 4},
	{Springer, 2004}.

\bibitem[BM]{BM}
	{A. Bayer and E. Macr\`{i}},
	{MMP for moduli of sheaves on $K3$s via wall-crossing: nef and movable cones, Lagrangian fibrations},
	{arxiv:1301.6968, 60p.}

\bibitem[Boi]{Boi}
  {S. Boissi\'{e}re},
  {\it Automorphismes naturels de l'espaces de Douady de points sur une surface,\it}
  {Canad. J. Math.}
  \textbf{64} (2012), 3-23.

\bibitem[BNS]{BNS} S. Boissi\`{e}re, M. Nieper-Wisskirchen and A. Sarti, 
	\em Higher dimensional Enriques varieties and automorphisms of generalized Kummer varieties, \em 
	Journal de Math\'{e}matiques Pures et Appliqu\'{e}es \textbf{95} (2011), 553-563. 

\bibitem[BNS2]{BNS2} S. Boissi\`{e}re, M. Nieper-Wisskirchen and A. Sarti, 
	\em Smith theory and irreducible holomorphic symplectic manifolds, \em 
	to appear in J. of Top., arXiv:1204.4118 (2012), 31pages.

\bibitem[Cam]{Cam} C. Camere, 
	\em Symplectic involutions of holomorphic symplectic four-folds, \em 
	Bull. London Math. Soc. \textbf{44} no.\ 4, 687-702 (2012).

\bibitem[Fuj]{Fuj} A. Fujiki,
	\em A theorem on bimeromorphic maps of K\"ahler manifolds and its applications, \em 
	Publ. Res. Inst. Math. Sci. \textbf{17}, no.\ 2 (1981), 735–754.

\bibitem[HT]{HT} B. Hassett and Y. Tschinkel,
	\em Moving and ample cones of holomorphic symplectic fourfolds, \em
	Geom. Funct. Anal.,  \textbf{19} (2009), 1065-1080.

\bibitem[Huy]{Huy}
	{D. Huybrechts},
	\em Compact hyperk\"ahler manifolds: basic results, \em
	Invent. Math. \textbf{135} (1999), 63-113. Erratum in: Invent. Math. \textbf{152} (2003), 209-212.

%\bibitem[Huy2]{Huy2} D. Huybrechts,
%	\em A global Torelli theorem for hyperk\"ahler manifolds [after Verbitsky], \em
%	S\'{e}minaire BOURBAKI 63\`{e}me ann\'{e}e, n. 1040, arXiv:1106.5573

\bibitem[HL]{HL}
      {D. Huybrechts, M. Lehn},
      {\it The geometry of moduli spaces of sheaves,\it}
      {Aspects of Mathematics 31, Friedr. Vieweg \& Sohn, Braunschweig 1997.}
	
\bibitem[Mar]{markman}
	{E. Markman},
	{\em A survey of Torelli and monodromy results for holomorphic-symplectic varieties},
	Complex and differential geometry, 257-322, 
	Springer Proc. Math., \textbf{8}, Springer, Heidelberg, 2011. 
	arXiv:1101.4606v3.
	
\bibitem[MM]{MM}
	{E. Markman and S. Mehrotra},
	{\em Hilbert schemes of $K3$ surfaces are dense in moduli},
	arXiv:1201.0031v1.
	
\bibitem[Mon]{Mon} G. Mongardi, 
	\em Symplectic involutions on deformations of $\ta{K3}$, \em 
	Cent. Eur. J. Math. \textbf{10} no.\ 4, 1472-1485 (2012). 

\bibitem[Mon2]{Mon2} G. Mongardi,
	\em Automorphisms of hyperk\"ahler manifolds, \em
	Thesis, 
    arXiv: 1303.4670, 110pp.
     
\bibitem[Muk]{mukai}
	{S. Mukai},
	{\it Symplectic structure of the moduli space of sheaves on an abelian or $K3$ surface \it},
	{Invent. Math.},
	\textbf{77} (1984), 101-116.

\bibitem[Nik]{Nik}
    {V. V. Nikulin},
	{\it Integral symmetric bilinear forms and some of their applications (English translation),\it}
	{Math. USSR Izv.},
	\textbf{14} (1980), 103-167.

\bibitem[Nik2]{Nik2}
    {V. V. Nikulin},
	{\it Finite automorphism groups of K\"{a}hler $K3$ surfaces (English translation), \it}
	{Trans. Moscow Math. Soc.}, 
	\textbf{38} (1980), 71-135.

\bibitem[O'G]{O'G}
	{K. O'Grady,}
	{\it Irreducible symplectic $4$-folds and Eisenbud-Popescu-Walter sextics, \it}
	{Duke Math. J.}
	\textbf{134} (2006) 99-137.

%\bibitem[O'G2]{O'G2}
%	{Kieran G. O'Grady},
%	{\it Periods of double EPW-sextics,\it}
%	{}
%	\textbf{} 

\bibitem[O'G3]{O'G3}
	{K. O'Grady},
	{\it The weight-two Hodge structure of modulie spaces of sheaves on a $K3$ surface,\it}
	{J. Alg. Geom.}
	\textbf{6} (1996), 141-207.

\bibitem[O'G4]{O'G4} K. O'Grady,
      \em Involutions and linear systems on holomorphic symplectic manifolds, \em
      Geom. Funct. Anal. \textbf{15} (2005) 1223-1274.

\bibitem[OS]{OS} K. Oguiso and S. Schr\"oer, 
	\em Enriques manifolds, \em 
	J. Reine Angew. Math. \textbf{661} (2011), 215-235.

%\bibitem[Saw]{sawon} J. Sawon,
%	\em Abelian fibred holomorphic symplectic manifolds, \em
%	Turk. J. Math., \textbf{27} (2003), 197-230.

\bibitem[Sca]{scattone}
	{F. Scattone},
	{\it On the compactification of moduli spaces for algebraic $K3$ surfaces, \it}
	{Mem. Amer. Math. Soc.},
	\textbf{70} (1987), no.\ 374, x+86 pp. 

\bibitem[Sze]{Sze} B. Szendr\H{o}i,
	\em Some finiteness results for Calabi$-$Yau threefolds, \em 
	J. Lond. Math. Soc. \textbf{60} (2), 689-699 (1999).

\bibitem[Ueno]{ueno} K. Ueno,
	{\it Classification theory of algebraic varieties and compact complex spaces, \it}
	{Lect. Notes in Math. 439},
	{Springer-Verlag, Berlin-Heidelberg-New York,} 1975.

\bibitem[Ver]{verbitsky}
	{M. Verbitsky},
	{\it A global Torelli theorem for hyperk\"{a}hler manifolds, \it}
	{arXiv: 0908.4121}

\bibitem[Yosh]{Yosh}
      {K. Yoshioka},
      {\it An application of exceptional bundles to the moduli of stable sheaves on a $K3$ surface,\it}
      12 pages, alg-geom/9705027.

\end{thebibliography}
\end{document}